\numberwithin{equation}{section}
\theoremstyle{plain}
\newtheorem{theorem}[equation]{Theorem}
\newtheorem{lemma}[equation]{Lemma}
\newtheorem{claim}[equation]{Claim}
\theoremstyle{definition}
\newtheorem{definition}[equation]{Definition}
\theoremstyle{remark}
\newtheorem{remark}[equation]{Remark}
\renewcommand{\Re}{\operatorname{Re}}
\renewcommand{\Im}{\operatorname{Im}}
\newcommand{\sym}{\operatorname{sym}}
\renewcommand{\mod}{\operatorname{mod}\ }
\newcommand{\cA}{\mathcal{A}}
\newcommand{\cB}{\mathcal{B}}
\newcommand{\cD}{\mathcal{D}}
\newcommand{\cF}{\mathcal{F}}
\newcommand{\cM}{\mathcal{M}}
\newcommand{\cN}{\mathcal{N}}
\newcommand{\sV}{\mathscr{V}}
\newcommand{\ve}{\varepsilon}
\def\@tocline#1#2#3#4#5#6#7{\relax
  \ifnum #1>\c@tocdepth 
  \else
    \par \addpenalty\@secpenalty\addvspace{#2}%
    \begingroup \hyphenpenalty\@M
    \@ifempty{#4}{%
      \@tempdima\csname r@tocindent\number#1\endcsname\relax
    }{%
      \@tempdima#4\relax
    }%
    \parindent\z@ \leftskip#3\relax \advance\leftskip\@tempdima\relax
    \rightskip\@pnumwidth plus4em \parfillskip-\@pnumwidth
    #5\leavevmode\hskip-\@tempdima
      \ifcase #1
       \or\or \hskip 1em \or \hskip 2em \else \hskip 3em \fi%
      #6\nobreak\relax
    \hfill\hbox to\@pnumwidth{\@tocpagenum{#7}}\par
    \nobreak
    \endgroup
  \fi}
\begin{document}

\title[Super-positivity]
        {Super-positivity of a family of L-functions \\ in the level aspect}
\author{Dorian Goldfeld and Bingrong Huang}
\address{Dorian Goldfeld: Department of Mathematics \\ Columbia University \\ New York \\NY 10027 \\USA}
\email{goldfeld@columbia.edu}
\address{Bingrong Huang: School of Mathematics \\ Shandong University \\ Jinan \\Shandong 250100 \\China \newline
\indent Current address: School of Mathematical Sciences \\ Tel Aviv University \\ Tel Aviv 69978 \\ Israel}
\email{bingronghuangsdu@gmail.com}
\keywords{level, L-function, mollification, real zeros,
super-positivity, zero-density}
\thanks{Dorian Goldfeld is partially supported by NSA Grant H98230-16-1-0009,
Bingrong Huang is\break\phantom{xx} supported  by the European Research Council, under the European Union's Seventh Framework\break\phantom{xx} Programme (FP7/2007-2013)/ERC grant agreement n~$^{\text{o}}$~320755}

\dedicatory{For Don Zagier on his 65$^{th}$ birthday.}

\begin{abstract}
 An automorphic self dual L-function has the super-positivity property if all derivatives of the completed L-function at the central point $s=1/2$ are non-negative and all derivatives at a real point $s > 1/2$ are positive. In this paper we prove that at least 12\% of
  L-functions associated to
  Hecke basis cusp forms of weight $2$ and large prime level $q$  have the super-positivity property. It is also shown that at least 49\% of such L-functions have no real zeros on $ \Re(s) > 0$ except possibly at $s = 1/2.$
\end{abstract}

\maketitle

\section{Introduction} \label{sec:intr}

The notion of super-positivity of self dual L-functions was introduced in \cite{yun2015shtukas} with breakthrough applications to the function field analog of the Gross--Zagier formula for higher derivatives of L-functions.
We say a self dual L-function has the \emph{super-positivity} property if all derivatives of its completed L-function (including Gamma factors and power of the conductor) at the central value $s = 1/2$ are non-negative,
all derivatives at a real point $s > 1/2$ are positive,
and if the $k_0$-th derivative is positive then all $(k_0+2j)$-th derivatives are positive, for all $j\in\mathbb{N}$. See \cite{goldfeld2016super} for more details.

 In this paper, we continue our investigation of super-positivity of L-functions associated to classical modular forms
  as begun in \cite{goldfeld2016super}.
Let $q$ be a fixed large prime.
Let $S_2(q)$ be the space of holomorphic weight $2$ cusp forms
of level $q$, and $H_2(q)$ a basis of $S_2(q)$ that are eigenfunctions
of all the Hecke operators and have the first Fourier coefficient
$a_f(1)=1$. Our main theorem in this paper is as follows.

\begin{theorem} \label{thm:SuperPositivity}
  There are infinitely many modular forms $f$ of weight $2$ and prime level
  such that $L(s,f)$ has the super-positivity property.
  In fact, the proportion of $f\in H_2(q)$ which have the super-positivity
  property is $\geq 12\%$, when $q$ is a sufficiently large prime.
\end{theorem}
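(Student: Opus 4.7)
The plan is to reduce super-positivity of $L(s,f)$ to two analytically tractable conditions via the criterion established in the authors' companion paper \cite{goldfeld2016super}: namely (a) the completed L-function $\Lambda(s,f)$ has no real zeros on $(0,1)\setminus\{1/2\}$, and (b) the appropriate non-vanishing holds at the central point---that is, $L(1/2,f)\neq 0$ when $\ve_f=+1$, and $L'(1/2,f)\neq 0$ when $\ve_f=-1$. Both conditions will be treated by the mollifier method in the level aspect. For a parameter $\th<1/2$ and a polynomial $P$ with $P(0)=0$, $P(1)=1$, define a mollifier of length $q^\th$ by
$$M(s,f) \;=\; \sum_{n \leq q^\th} \mu(n)\, P\!\left(\f{\log(q^\th/n)}{\th\log q}\right) \f{a_f(n)}{n^s},$$
so that $L(s,f)M(s,f)$ is a smooth approximation to the constant function $1$. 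Split $H_2(q)=H_2^+(q)\sqcup H_2^-(q)$ according to the sign $\ve_f$ of the functional equation.

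For real $s$ in a small window around $1/2$, I would next compute the first and second mollified moments
$$\sum_{f\in H_2^\pm(q)}^{h} L(s,f)\, M(s,f) \quad\text{and}\quad \sum_{f\in H_2^\pm(q)}^{h} \bigl(L(s,f)\, M(s,f)\bigr)^{2},$$
where $\sum^{h}$ denotes the harmonic average $\sum_f 1/\|f\|^2$. Expansion via the approximate functional equation followed by the Petersson trace formula decomposes each moment into a diagonal main term---a concrete quadratic form in the coefficients of $P$---plus an off-diagonal Kloosterman contribution. Weil's bound keeps the off-diagonal negligible so long as $\th<1/2$, and the passage from harmonic to natural density is effected by the Kowalski--Michel procedure. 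A Cauchy--Schwarz inequality then produces, for each real test point $s$, a quantitative lower bound on the number of $f\in H_2^\pm(q)$ with $L(s,f)M(s,f)\neq 0$. Applying this with $s=1/2$ on $H_2^+(q)$ yields the central non-vanishing event (b); applying it at $s$ slightly to the right of $1/2$, together with the constancy of the sign of $M(s,f)$ on a positive-proportion subset, the functional equation, and classical density bounds on real zeros in the critical strip, converts positivity at $s$ into the absence of real zeros of $\Lambda(s,f)$ on $(0,1)\setminus\{1/2\}$---giving the $49\%$ figure stated in the abstract.

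To reach the $12\%$ super-positivity bound one must exhibit, for a positive proportion of $f$, the \emph{simultaneous} occurrence of events (a) and (b). A naive application of $|A\cap B|\geq |A|+|B|-|H_2(q)|$ would require total proportions exceeding $112\%$, unattainable from the two arguments in isolation. Instead, I would compute a \emph{joint} mollified second moment at two real test points simultaneously, with two (possibly distinct) mollifiers, and optimize the polynomial $P$ and the length parameter $\th$ in tandem. The main obstacle is this joint moment estimate: pushing $\th$ close to the natural barrier $1/2$ requires control of off-diagonal Kloosterman sums in several variables, with uniformity in the two mollifier lengths. The explicit $12\%$ emerges from a delicate variational optimization in the spirit of the Iwaniec--Sarnak and Kowalski--Michel non-vanishing proportions.
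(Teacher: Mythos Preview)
Your reduction is not the one the paper uses, and it appears to be insufficient. You claim super-positivity follows from (a) no real zeros of $\Lambda(s,f)$ on $(0,1)\setminus\{1/2\}$ together with (b) central non-vanishing. But the criterion actually invoked (from Stark--Zagier, as recalled in \cite{goldfeld2016super}) is that $L(s,f)$ be zero-free in the \emph{triangle}
\[
  \mathcal{R}_{1/2}=\bigl\{\sigma+it:\tfrac12<\sigma<1,\ |t|\le\sigma-\tfrac12\bigr\}
\]
in the complex plane, not merely on the real segment. Absence of real zeros and $L^{(\epsilon)}(1/2,f)\neq0$ say nothing about complex zeros just off the real axis, and such zeros can make higher derivatives of $\Lambda$ at $1/2$ negative; so your conditions (a)+(b) do not imply super-positivity. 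Condition (b) plays no role in the paper's argument at all.

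Because the target region is two-dimensional, the paper's method is also structurally different from what you outline. There is no Cauchy--Schwarz non-vanishing argument at isolated test points and no ``joint moment at two real points''. Instead, the triangle is covered by a nested family of boxes $\mathcal{R}_j$ (indexed by distance from the critical line), and for each box Selberg's lemma converts ``$L(s,f)$ has a zero in $\mathcal{R}_j$'' into an inequality involving contour integrals of $\log|LM(s,f)|$. Averaging over $f$ and inserting the mollified second-moment asymptotics (near the critical line) together with the convexity bound on $|LM|^2-1$ (away from it) bounds each $\mathcal{N}_j(q)/\mathcal{A}(q)$ by an explicit numerical integral; summing over $j$ gives $\mathcal{N}(q)/\mathcal{A}(q)\le0.88$, hence $\mathcal{M}(q)\ge0.12\,\mathcal{A}(q)$. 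The intersection-of-events obstruction you worry about never arises, because one is bounding a single event (``a zero exists somewhere in $\mathcal{R}_{1/2}$'') from above by a union bound, not intersecting two positive-proportion events.
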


In the course of proving theorem \ref{thm:SuperPositivity}
we also obtained the following result about real zeros of L-functions as a
by-product.

\pagebreak

\begin{theorem} \label{thm:RealZero}
  There are infinitely many modular forms $f$ of weight $2$ and prime level
  such that $L(s,f)$ has no nontrivial real zeros except possibly at $s=1/2$.
  In fact, the proportion of $f\in H_2(q)$ with this property is $\geq 49\%$,
  when $q$ is a sufficiently large prime.
\end{theorem}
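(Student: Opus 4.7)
The plan is to modify the argument used to prove Theorem~\ref{thm:SuperPositivity} by weakening the target from super-positivity to mere non-vanishing of $L(\sigma,f)$ on the real interval $(1/2,1)$. As a first step I would perform the standard reduction: by the functional equation $\Lambda(s,f)=\epsilon_{f}\Lambda(1-s,f)$ with $\epsilon_{f}\in\{\pm 1\}$, together with the Euler-product non-vanishing of $L(s,f)$ on $\Re(s)\geq 1$ and the absorption of trivial zeros into the archimedean Gamma factor on $\Re(s)\leq 0$, the nontrivial real zeros of $L(s,f)$ with $\Re(s)>0$ lie in $(0,1)$ and are paired by $\sigma\leftrightarrow 1-\sigma$. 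Hence Theorem~\ref{thm:RealZero} is equivalent to showing that at least $49\%$ of $f\in H_{2}(q)$ satisfy $L(\sigma,f)\neq 0$ for every $\sigma\in(1/2,1)$.

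Next, I would deploy a real-valued mollifier $M(s,f)=\sum_{n\leq y}x_{n}\lambda_{f}(n)n^{-s}$ of length $y=q^{\theta}$ with $\theta<1/2$ and coefficients $x_{n}=\mu(n)P(\log(y/n)/\log y)$ for a polynomial $P$ with $P(0)=0,P(1)=1$, so that $M(\sigma,f)$ approximates $L(\sigma,f)^{-1}$ in mean square. Since $\lambda_{f}(n)$ and $x_{n}$ are real, the product $L(\sigma,f)M(\sigma,f)$ is real on $[1/2,1]$. Using the Petersson trace formula, I would compute, uniformly for $\sigma\in[1/2,1]$, the harmonic first and second mollified moments
\begin{equation*}
\sum_{f\in H_{2}(q)}^{h} L(\sigma,f)M(\sigma,f)=1+o(1),\qquad \sum_{f\in H_{2}(q)}^{h}\bigl(L(\sigma,f)M(\sigma,f)\bigr)^{2}=c(\sigma,\theta,P)+o(1),
\end{equation*}
from which the harmonic variance $\sum_{f}^{h}(LM-1)^{2}=c(\sigma,\theta,P)-1+o(1)$ is read off.

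By Chebyshev applied to this variance, the harmonic proportion of $f\in H_{2}(q)$ with $L(\sigma,f)M(\sigma,f)\leq 0$ at a fixed $\sigma\in(1/2,1)$ is at most $c(\sigma,\theta,P)-1+o(1)$. To pass from a pointwise bound to uniform non-vanishing on $(1/2,1)$, I would intersect the exceptional sets over a sufficiently dense finite sampling $\{\sigma_{j}\}\subset(1/2,1)$, and combine this with a convexity bound on $LM$ together with the boundary fact $L(\sigma,f)M(\sigma,f)\to 1$ as $\sigma\to\infty$ to force $LM(\sigma,f)>0$ throughout $(1/2,1)$ for every $f$ outside the intersected exceptional set; since $M(s,f)$ is a Dirichlet polynomial of length $q^{\theta}$, its own real zeros inside $(1/2,1)$ are controlled by standard zero-counting and absorbed in the error. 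Optimising the parameters $\theta$ and $P$ so that $c(\sigma,\theta,P)-1$ averages below the threshold excluding at most $51\%$ of the forms then yields the proportion $\geq 49\%$.

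The main obstacle will be obtaining the second mollified moment uniformly in $\sigma\in[1/2,1]$ with no loss of logarithmic factors: the off-diagonal contribution from the Kloosterman sums in the Petersson formula has to be handled with controlled dependence on the moving real parameter $\sigma$, adapting the Iwaniec--Sarnak technique of opening shifted divisor sums and separating variables via contour shifts from the critical line to this real horizontal setting. Only then can the mollifier length exponent $\theta$ be pushed sufficiently close to $1/2$ for the polynomial $P$ to be optimised to yield the numerical constant $49\%$.
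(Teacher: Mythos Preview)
Your proposal has a genuine gap at the step passing from pointwise information at sample points $\{\sigma_j\}$ to uniform non-vanishing on $(1/2,1)$. First, positivity of $LM(\sigma_j,f)$ at every sample point does \emph{not} force $L(\sigma,f)\ne 0$ throughout $(1/2,1)$: a double real zero of $L$, or two simple real zeros close together, produces no sign change in $LM$ and hence need not make $LM\le 0$ at any $\sigma_j$; the appeal to a ``convexity bound'' cannot repair this, since Phragm\'en--Lindel\"of controls size, not sign. Second, even accepting your zero-detection mechanism, the union bound does not close: for $\sigma-1/2$ of order $1/\log q$ the variance $c(\sigma,\theta,P)-1$ is bounded below by an absolute positive constant (this is exactly what Theorem~\ref{thm:MollificationS} computes), so summing Chebyshev over even a bounded number of sample points approaching $1/2$ already exceeds $1$. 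Third, the dismissal of real zeros of the mollifier as ``absorbed in the error'' is unjustified: a Dirichlet polynomial of length $q^{\theta}$ may well have real zeros on $(1/2,1)$, and at such a point $LM=0$ regardless of $L$.

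The paper avoids all of this by a different zero-detection device. Selberg's lemma converts the presence of a zero of $LM$ inside a rectangle into an \emph{inequality} for boundary integrals of $\log|LM|$; averaging that inequality over $f$ and applying concavity of the logarithm together with the mollified second-moment asymptotics (Theorems~\ref{thm:MollificationS2} and~\ref{thm:second_moment_natural}) bounds the proportion $\mathcal N_r(q)/\mathcal A(q)$ of forms with a zero in the relevant box by a single explicit double integral of $\log\mathcal V(u,v)$ --- no discrete sampling, no union bound. A further ingredient your plan omits is the parity trick: every odd $f$ has $L(1/2,f)=0$, which contributes a ``free'' zero in the Selberg count and produces the offset $-1/4$ in~\eqref{eqn:N0toV}; without it the numerical optimisation would not reach $49\%$.
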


A key ingredient of the proofs of the above theorems is given in a paper of Stark and Zagier \cite{StarkZagier1980} which essentially establishes that super-positivity follows if $L(s,f)$ has no zeros in the triangle $\big\{s=\sigma+it \; \big |\; 1/2<\sigma<1,\; t\in\mathbb R,\; |t| \leq \sigma- 1/2\big\}$.
In this regard, see \cite{goldfeld2016super}.
To prove such a zero-free region for a large number of those L-functions, we first use Selberg's lemma \cite[Lemma 14]{selberg1946contributions2} to convert the count of the number of zeros in this triangle
to estimates of the mollified second moments.
We will need an asymptotic formula for the mollified second moment of L-functions near the central point $1/2$.
In \S \ref{sec:msm}, we prove this (with the harmonic weights) by an asymptotic formula for the twisted second moment (see theorem \ref{thm:TwistedSecondMoment} below) and Perron's formula.
Then we deal with L-functions away from the critical line with the harmonic weights by the convexity principle (see \S \ref{sec:mollification_away}).
In order to get a better bound, we also need to improve an estimate of the first moment, see lemmas \ref{lemma:FirstMomentL_h} and \ref{lemma:FirstMoment}.
The harmonic weights appear naturally, since we will use the Petersson trace formula. We will remove those harmonic weights in \S \ref{sec:removing_weights}.
The proofs of both theorems \ref{thm:SuperPositivity} and \ref{thm:RealZero} depend on numerics, which we didn't try to optimize.
Note that some of the techniques in this paper are motivated
  by Kowalski--Michel \cite{kowalski1999analytic,kowalski2000explicit} and
  Conrey--Soundararajan \cite{conrey2002real,conrey3000real}.

\begin{remark}
  It is possible to consider even and odd forms separately,
  and then get a more precise result even for fixed even weight and square-free level. One can also prove
  a positive proportion result in the weight or spectral aspect
  by combining the ideas in this paper with   \cite{goldfeld2016super}.
\end{remark}

\medskip
\noindent
\textbf{Acknowledgements.} The second author would like to thank Professors Jianya Liu and Ze\'ev Rudnick for their valuable advice and constant encouragement.


\section{Preliminaries}

%
%
%

\subsection{The approximate functional equation}
For $f\in H_2(q)$, let
$$
  f(z) = \sum\limits_{n=1}^\infty n^{1/2} \lambda_f(n) e(nz), \quad \textrm{(for $z\in \mathbb H$)}
$$
be its Fourier expansion.
We can associate it with an L-function
$$L(s, f) := \sum_{n=1}^\infty \frac{\lambda_f(n)}{n^s},\quad \textrm{(for $\Re(s) > 1$)}.$$
The functional equation is written in terms of the completed L-function
\[
  \Lambda(s,f) = \left(\frac{\sqrt{q}}{2\pi}\right)^s \Gamma\left(s+\frac{1}{2}\right)L(s,f),
\]
namely
\begin{equation}\label{eqn:FE}
  \Lambda(s,f) = \ve_f \Lambda(1-s,f),
\end{equation}
and the sign $\ve_f$ of the functional equation is
\begin{equation}\label{eqn:ve_f}
  \ve_f = q^{1/2}\lambda_f(q) \in \{\pm1\}.
\end{equation}

Let $G$ be a polynomial of degree $2N$ (large enough) satisfying
\begin{equation}\label{eqn:G}
  G(-s)=G(s), \qquad    G(0)=1,
  \quad \mathrm{and}\quad
  G(-N)=\ldots=G(-1)=0.
\end{equation}
Notice that we have $G'(0)=G'''(0)=0$.
That is
\[
  G(s)=a_0\prod_{k=1}^{N} (s^2-k^2), \quad a_0=(-1)^N (N!)^{-2}.
\]
Define
\begin{equation}\label{eqn:H_t(s)}
  H_t(s) := G(s+it)G(s-it)\Gamma(s+1+it)\Gamma(s+1-it).
\end{equation}
By Stirling's formula, (in a vertical strip) we have
\begin{equation}\label{eqn:H<<}
  \frac{H_t(s)}{H_t(\delta)} \ll (1+|t|+|\Im(s)|)^{B} \cdot \min\left\{ 1, e^{-\pi (|\Im(s)|-|t|)}\right\},
\end{equation}
for some constant $B>0$ depending on $N$ and $\Re(s)$.
Note that for $|\delta|\ll \frac{\log\log q}{\log q}$, we have
\[
  \frac{H_t(-\delta)}{H_t(\delta)} = \frac{\Gamma(-\delta+1+it)\Gamma(-\delta+1-it)}{\Gamma(\delta+1+it)\Gamma(\delta+1-it)}
  = 1 + \mathcal{O}\big(|\delta|\big).
\]
By considering the following integral
\[
  I_{\delta} = \frac{1}{2\pi i} \int\limits_{3-i\infty}^{3+i\infty} \Lambda(1/2+\delta+it+s,f)\Lambda(1/2+\delta-it+s,f) G(s+it) G(s-it) \frac{ds}{s-\delta},
\]
we can obtain the approximate functional equation of $|L(1/2+\delta+it,f)|^2$.

\begin{lemma}\label{lemma:AFE}
  Let $f\in H_2(q)$.
  Let $-\frac{B}{\log q}\leq \delta\leq \vartheta$, $\delta\neq0$, and $t\in \mathbb{R}$.
  We have
  \[
    \big|L(1/2+\delta+it,f)\big|^2 = \left(\frac{q}{4\pi^2}\right)^{-\delta}
       \sum_{n=1}^{\infty}\frac{\lambda_f(n)\eta_{it}(n)}{n^{1/2}}
       V_{\delta,t}\left(\frac{4\pi^2n}{q}\right).
  \]
 Here $\eta_\nu(n):=\sum\limits_{ad=n}\left(\frac{a}{d}\right)^\nu$ is the generalized divisor function, and for any $y>0,$
  \[
    V_{\delta,t}(y) := \frac{1}{2\pi i} \int\limits_{3-i\infty}^{3+i\infty}
         \frac{H_{t}(s)}{H_{t}(\delta)} \zeta_q(1+2s)y^{-s} \frac{2s}{(s-\delta)(s+\delta)} \; ds
  \]
  is real valued, and satisfies the following:
  \[
    \begin{split}
      V_{\delta,t}(y) & = \zeta_q(1+2\delta)y^{-\delta} + \frac{H_{t}(-\delta)}{H_{t}(\delta)} \zeta_q(1-2\delta)y^{\delta}
      + \mathcal{O}_{N}\left(y^N(1+|t|)^{B}\right),
    \end{split}
  \]
  for $0< y\leq 1$; and
  \[
    V_{\delta,t}(y) \ll_{A,N} y^{-A}(1+|t|)^{B} \quad \textrm{for all}\quad A\geq1,
  \]
  for $y\geq 1$, and  $B$ depending on $A$ and $N$.
  In particular, for all $y\geq 1/q$ we have
  \[
    V_{\delta,t}(y) \ll q^{\delta} (\log q) (1+|t|)^B.
  \]
  Here $\zeta_q$ is the Riemann zeta function with the Euler factor at $q$ removed.
\end{lemma}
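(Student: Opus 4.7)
The plan is to compute the relevant contour integral in two different ways and equate. The cleanest presentation uses the symmetric kernel $\frac{2s}{(s-\delta)(s+\delta)}=\frac{1}{s-\delta}+\frac{1}{s+\delta}$ (so that $I_\delta$ appears paired with its reflected companion, exactly as suggested by the form of $V_{\delta,t}$), applied to $\Psi(s):=\Lambda(1/2+it+s,f)\Lambda(1/2-it+s,f)$ multiplied by $G(s+it)G(s-it)$, integrated over $\Re(s)=3$.

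On the arithmetic side, on $\Re(s)=3$ everything converges absolutely. Hecke multiplicativity for prime $q$ yields
\[
  L(1/2+it+s,f)\,L(1/2-it+s,f)=\zeta_q(1+2s)\sum_{n\geq 1}\frac{\lambda_f(n)\eta_{it}(n)}{n^{1/2+s}}.
\]
Completing by the $\Gamma$- and $(q/4\pi^2)^{1/2+s}$-factors, interchanging sum and integral, and collecting the $\Gamma$'s together with the $G$'s into $H_t(s)$, one identifies the inner $n$-integral as $H_t(\delta)\cdot V_{\delta,t}(4\pi^2 n/q)$ (times explicit $(q/4\pi^2)^{1/2}$-factors), reproducing the sum on the right-hand side of the claimed AFE.

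On the analytic side, by the functional equation $\Lambda(\cdot,f)=\ve_f\Lambda(1-\cdot,f)$ and $\ve_f^2=1$, $\Psi$ is even in $s$; $G(s+it)G(s-it)$ is also even since $G$ is. The kernel $\frac{2s}{s^2-\delta^2}$ is odd in $s$, so the full integrand is odd in $s$ away from its poles. Shifting the contour from $(3)$ to $(-3)$ and substituting $s\mapsto -s$ therefore produces the negative of the original integral, so the integral equals half of the total residue picked up. The only poles in the strip are the simple ones at $s=\pm\delta$ from the kernel, and both residues evaluate to $\Psi(\delta)\,G(\delta+it)G(\delta-it)=|\Lambda(1/2+\delta+it,f)|^2\,G(\delta+it)G(\delta-it)$. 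Substituting $|\Lambda|^2=(q/4\pi^2)^{1/2+\delta}\Gamma(1+\delta+it)\Gamma(1+\delta-it)|L|^2$ and comparing with the arithmetic expression, the $\Gamma$, $G$, and $(q/4\pi^2)^{1/2}$-factors combine into $H_t(\delta)$ on both sides and cancel, leaving the formula for $|L(1/2+\delta+it,f)|^2$ stated in the lemma. The main obstacle is precisely this bookkeeping: ensuring that the completion factors from $\Lambda$ align exactly with $H_t(\delta)$ so that only the stated $(q/4\pi^2)^{-\delta}$ survives.

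The properties of $V_{\delta,t}$ all follow from shifting the defining contour. Reality uses $\overline{H_t(s)}=H_t(\bar s)$, $\overline{\zeta_q(1+2s)}=\zeta_q(1+2\bar s)$, and the $\tau\mapsto-\tau$ symmetry along $s=3+i\tau$. For $0<y\leq 1$, I shift to $\Re(s)=-N$: the zeros of $G(s\pm it)$ at $s=-it\mp 1,\ldots,-it\mp N$ and $s=it\mp 1,\ldots,it\mp N$ cancel the trivial poles of $\Gamma(s+1\pm it)$, so $H_t$ is holomorphic for $\Re(s)>-N-1$; also the pole of $\zeta_q(1+2s)$ at $s=0$ is killed by the zero of $\frac{2s}{s^2-\delta^2}$ there. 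Hence only the poles at $s=\pm\delta$ contribute, giving the main terms $\zeta_q(1+2\delta)y^{-\delta}$ and $\frac{H_t(-\delta)}{H_t(\delta)}\zeta_q(1-2\delta)y^\delta$, while the remainder on $\Re(s)=-N$ is $O(y^N(1+|t|)^B)$ via Stirling and \eqref{eqn:H<<}. For $y\geq 1$, shifting far to the right yields $V_{\delta,t}(y)\ll y^{-A}(1+|t|)^B$. The uniform estimate for $y\geq 1/q$ then follows by combining these bounds with $\zeta_q(1\pm 2\delta)\ll\log q$ and $y^{\mp\delta}\ll q^\delta$ valid in the range $|\delta|\ll(\log\log q)/\log q$.
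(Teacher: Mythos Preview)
Your argument is correct and is exactly the standard derivation that the paper defers to Kowalski--Michel; packaging the kernel as $\frac{2s}{s^2-\delta^2}$ and using the evenness of $\Psi(s)\,G(s+it)G(s-it)$ under $s\mapsto -s$ is the clean way to carry it out, and your contour shifts for the properties of $V_{\delta,t}$ are the intended ones.

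One small imprecision in the last step: the claim ``$\zeta_q(1\pm 2\delta)\ll\log q$'' fails when $|\delta|=o(1/\log q)$, which the hypotheses of the lemma allow. To get the uniform bound $V_{\delta,t}(y)\ll q^{\delta}(\log q)(1+|t|)^B$ for $y\in[1/q,1]$ you must instead use the cancellation between the two main terms: writing $\frac{H_t(-\delta)}{H_t(\delta)}=1+O\big(|\delta|(1+|t|)^B\big)$ and $\zeta_q(1\pm 2\delta)=\pm\frac{1-q^{-1}}{2\delta}+O(1)$ gives
\[
\zeta_q(1+2\delta)y^{-\delta}+\tfrac{H_t(-\delta)}{H_t(\delta)}\zeta_q(1-2\delta)y^{\delta}
=-\frac{(1-q^{-1})\sinh(\delta\log y)}{\delta}+O\big(q^{\delta}(1+|t|)^B\big),
\]
and $|\sinh(\delta\log y)/\delta|\le|\log y|\cosh(\delta\log y)\ll q^{\delta}\log q$ on $[1/q,1]$.
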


\begin{proof}
  See Kowalski--Michel \cite[pp. 526--527]{kowalski1999analytic}.
\end{proof}

\subsection{The Petersson trace formula}

Let $\mathbb{H}$ denote the upper half plane.
The Fourier coefficients of $f\in H_2(q)$ satisfy the relation
\begin{equation}\label{eqn:HR}
  \lambda_f(m)\lambda_f(n)= \sum_{d|(m,n)} \varepsilon_q(d) \lambda_f\left(\frac{mn}{d^2}\right),
\end{equation}
where $\varepsilon_q(\cdot)$ is the principle Dirichlet character modulo $q$.
In particular, we have $\lambda_f(q)\lambda_f(\ell)=\lambda_f(q\ell)$ for any $\ell\in\mathbb{Z}_{>0}$.
The Petersson trace formula is given by the following basic orthogonality relation on $H_2(q)$.

\begin{lemma}\label{lemma:PTF}
  Let $m,n\geq1$. Then
  \[  \sum_{f\in H_2(q)} \omega_f\cdot  \lambda_f(m)\lambda_f(n)
 \;   = \; \delta_{m,n} - 2\pi \sum_{c\equiv0(\mod q)}
                 \frac{S(m,n;c)}{c}J_{1}\left(\frac{4\pi\sqrt{mn}}{c}\right).
  \]
  where
  $$
    \omega_f := \frac{1}{4\pi\langle f,f\rangle}
    = \frac{\zeta(2)}{|H_2(q)|} \cdot \frac{1}{ L(1,\operatorname{sym}^2 f)  }
    \cdot\left(1+ \mathcal{O}\Big(\frac{(\log q)^4}{q}\Big)\right)
  $$
  is termed the harmonic weight of $f$,
  where $\langle\cdot,\cdot\rangle$ denotes the Petersson inner product on $S_2(q)$,
  $$
    \langle f,g \rangle = \int\limits_{\Gamma_0(q)\backslash\mathbb{H}} f(z)\,\overline{g(z)} \;\frac{dxdy}{y^2}.
  $$
\end{lemma}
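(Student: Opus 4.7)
The plan is to treat the two assertions separately: the orthogonality identity itself, and then the explicit asymptotic for the weight $\omega_f$.

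For the identity, I would follow the classical route via Poincar\'e series. Define the weight-$2$ Poincar\'e series
$$P_m(z) = \sum_{\gamma \in \Gamma_\infty \backslash \Gamma_0(q)} j(\gamma,z)^{-2}\, e(m\gamma z),$$
where $\Gamma_\infty$ is the stabilizer of $\infty$ in $\Gamma_0(q)$ and $j\left(\binom{a\ b}{c\ d},z\right) = cz+d$. Two computations give the two sides of the formula. An unfolding calculation against any $f \in S_2(q)$ produces $\langle P_m,f\rangle = c_0\, \lambda_f(m)/\sqrt{m}$ for an explicit constant $c_0$ depending only on the weight; expanding $P_n$ in an orthonormal basis built from $H_2(q)$ then gives a spectral expansion of $\langle P_m,P_n\rangle$ weighted by $1/\langle f,f\rangle$. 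On the geometric side, the direct Bruhat-decomposition unfolding of $\langle P_m,P_n\rangle$ yields $\delta_{m,n}$ from the trivial coset, while the non-trivial cosets (with $c \equiv 0 \pmod q$) produce $S(m,n;c)$ times a Bessel integral evaluating to $J_1(4\pi\sqrt{mn}/c)$, the sign $-2\pi$ coming from the factor $i^{-k}=-1$ at weight $k=2$. Matching constants gives the identity with $\omega_f = 1/(4\pi\langle f,f\rangle)$.

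For the explicit form of $\omega_f$, I would invoke the Rankin--Selberg identity. For a weight-$2$ newform of squarefree level $q$, this gives an identity of the schematic shape
$$\langle f,f\rangle_{\Gamma_0(q)} = \frac{[\operatorname{SL}_2(\bbZ):\Gamma_0(q)]}{8\pi^3}\, L(1,\operatorname{sym}^2 f)\,\Xi_q(f),$$
where $\Xi_q(f)$ is a product of local correction factors at $q$. Since $q$ is prime and $S_2(1)=\{0\}$, every $f \in H_2(q)$ is a newform, so the Hecke relations determine $\Xi_q(f)$ explicitly and yield $\Xi_q(f) = 1 + \mathcal{O}(1/q)$. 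Combining this with $[\operatorname{SL}_2(\bbZ):\Gamma_0(q)] = q+1$, the weight-$2$ dimension formula $|H_2(q)| = (q+1)/12 + \mathcal{O}(1)$, and $\zeta(2)=\pi^2/6$, one arrives at the displayed asymptotic expression for $\omega_f$.

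The main obstacle is producing the sharp error $\mathcal{O}((\log q)^4/q)$. This requires (i) carrying the Rankin--Selberg computation with enough precision that the geometric remainder is genuinely $\mathcal{O}(1/q)$ relative to the main term, and (ii) invoking an effective lower bound $L(1,\operatorname{sym}^2 f)^{-1} \ll (\log q)^{B}$ of Hoffstein--Lockhart type, with $B \leq 4$, so that when this factor absorbs the $\mathcal{O}(1/q)$ geometric error the overall bound $\mathcal{O}((\log q)^4/q)$ is preserved. No new analytic input is required beyond what is standard in this framework; the work lies entirely in tracking the normalizations and the local factors at $q$ with sufficient care.
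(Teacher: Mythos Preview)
The paper does not give a proof here at all; it simply refers the reader to Kowalski--Michel \cite[pp.~507--509]{kowalski1999analytic}. Your outline via Poincar\'e series for the trace identity and via the Rankin--Selberg unfolding for the weight asymptotic is exactly the standard route, and is what one finds in that reference, so in substance you are doing the same thing.

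One small correction on the error term: your step (ii) invoking Hoffstein--Lockhart is unnecessary. In the displayed formula the factor $1/L(1,\operatorname{sym}^2 f)$ sits \emph{outside} the parenthesis, so the multiplicative correction $\big(1+\mathcal{O}(\cdot)\big)$ does not need to absorb any lower bound on $L(1,\operatorname{sym}^2 f)$. The Rankin--Selberg identity relating $\langle f,f\rangle$ to $L(1,\operatorname{sym}^2 f)$ is exact; the only sources of error in rewriting it in the stated form are (a) replacing the index $q+1$ by $12\,|H_2(q)|$, and (b) the local Euler factor at $q$ (governed by $\lambda_f(q)^2=q^{-1}$). Both are genuinely $1+\mathcal{O}(1/q)$, so the claimed $\mathcal{O}\big((\log q)^4/q\big)$ is slack already and no Siegel-type input is required.
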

\begin{proof}
  See e.g. 
  Kowalski--Michel \cite[pp. 507--509]{kowalski1999analytic}.
\end{proof}
By Weil's bound for Kloosterman sums and the bound $J_1(x)\ll x$ for $J$-Bessel function,
we have (see e.g. \cite[eq. (12)]{kowalski1999analytic})
\begin{equation}\label{eq:harmonicsum}
  \sum_{f\in H_2(q)} \omega_f = 1  \;+ \;\mathcal O\left(q^{-3/2} \right).
\end{equation}
We can also consider the number of odd cusp forms.
Note that
\begin{equation}\label{eqn:sum^odd}
  \underset{f\ \textrm{is odd}}{\sum_{f\in H_2(q)}} \alpha_f =  \sum_{f\in H_2(q)} \frac{1-\ve_f}{2} \cdot \alpha_f,
\end{equation}
for any finite set of complex numbers $(\alpha_f)$.
By Kowalski--Michel \cite[Lemma 1]{kowalski2000lower}, we have
\begin{equation}\label{eqn:harmonicsum-odd}
  \underset{f\ \textrm{is odd}}{\sum_{f\in H_2(q)}} \omega_f = \frac{1}{2} + \mathcal{O}\left(\frac{1}{q}\right).
\end{equation}
Also, it's well known that
\begin{equation}\label{eqn:sum-odd}
  \underset{f\ \textrm{is odd}}{\sum_{f\in H_2(q)}} 1
  \sim \frac{1}{2} \sum_{f\in H_2(q)} 1.
\end{equation}


\section{The twisted second moment near the critical point}\label{sec:tsm}

Recall that $H_2(q)$ denotes a basis for the space of holomorphic Hecke cusp forms of weight $k=2$ for $\Gamma_0(q)\subset SL(2,\mathbb Z).$
Assume that for  all $f \in H_2(q)$  there is some uniquely defined $\alpha_f\in\mathbb C.$ Consider the set $$\{\alpha_f\} := \{\alpha_f\}_{f\in H_2(q)}.$$
The basic objects of study for the rest of this paper are given in the following definition.

\begin{definition} \label{defSums}
  Let
  $$
    \mathcal R_{1/2}
      := \big\{\beta+i\gamma \; \big | \; \beta \in (1/2,1) \; \text{and} \; |\gamma|\leq\beta - 1/2\big\}.
  $$
  For $q > 1$, a large prime number, we define the following sums
  \begin{equation*}
    \begin{split}
      &\mathcal{A}\big(\{\alpha_f\};  q\big) \; :=  \sum_{f\in H_2(q)} \alpha_f,
      \qquad
      \mathcal{A}(q) \; := \sum_{f\in H_2(q)} 1, \\
      &\mathcal{M}(q) \; :=  \underset{L(s,f)\, \ne \,0 \; \text{for} \; s \,\in \,\mathcal R_{1/2}} {\sum_{f\in H_2(q)}} \hskip-20pt 1,
      \qquad\quad\
      \mathcal{N}(q) \; := \underset{ \text{one zero in}  \,\mathcal R_{1/2}} {\underset{L(s,f)\; \text{has at least} } {\sum_{f\in H_2(q)}}}  \hskip -10pt 1.
    \end{split}
  \end{equation*}
\end{definition}

It follows from Stark--Zagier  \cite{StarkZagier1980} that it is enough to prove that at least 12\% of L-functions associated to $f \in H_2(q)$ have no zeros in the triangle $\mathcal R_{1/2}.$ Furthermore,
it is clear that
$$
  \mathcal{M}(q)+\mathcal{N}(q)=\mathcal{A}(q).
$$
The key strategy for proving  theorem \ref{thm:SuperPositivity} is
to try to show that $\mathcal{M}(q)$ is large compared to $\mathcal{A}(q)$.
To achieve this goal we will use the mollification method
which leads us to first consider the following twisted second moment of $L(s,f)$ at the special value $s = 1/2+\delta+it$.

\begin{theorem}\label{thm:TwistedSecondMoment}
  Let $-\frac{c}{\log q}\leq \delta \leq \vartheta$  with $c>0, \,0 < \vartheta <  \frac{1}{100}$ both fixed,   and
  $t\in\mathbb{R}$.
  Let $\ell\leq q^{1-4\vartheta}$ be a positive integer.
  We have the following asymptotic formula
 \[
    \begin{split}
       & \mathcal{A} \Big(\left\{\omega_f \lambda_f(\ell)|L(1/2+\delta+it,f)|^2\right\}; \, q\Big) \\
       & \hskip 10pt
       = \zeta_q(1+2\delta) \frac{\eta_{it}(\ell)}{\ell^{1/2+\delta}}
             + \zeta_q(1-2\delta) \frac{\eta_{it}(\ell)}{\ell^{1/2-\delta}}
                \frac{H_t(-\delta)}{H_t(\delta)} \left(\frac{q}{4\pi^2}\right)^{-2\delta}
              +  \mathcal{O}_{\varepsilon}\Big(\big(1+|t|\big)^B q^{-1/2+\ve}\Big).
    \end{split}
 \]
\end{theorem}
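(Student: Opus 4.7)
The plan is to expand $|L(1/2+\delta+it,f)|^{2}$ by the approximate functional equation of Lemma~\ref{lemma:AFE}, interchange the order of summation (legitimate thanks to the rapid decay of $V_{\delta,t}$ on the tail), and apply the Petersson trace formula of Lemma~\ref{lemma:PTF} to the inner sum $\sum_{f}\omega_{f}\lambda_{f}(\ell)\lambda_{f}(n)$. This yields
\[
\mathcal{A} = \left(\frac{q}{4\pi^{2}}\right)^{\!-\delta}\sum_{n\geq1} \frac{\eta_{it}(n)}{n^{1/2}}\, V_{\delta,t}\!\left(\frac{4\pi^{2}n}{q}\right) \Bigg\{\delta_{\ell,n} - 2\pi\sum_{c'\geq1} \frac{S(\ell,n;qc')}{qc'}\, J_{1}\!\left(\frac{4\pi\sqrt{\ell n}}{qc'}\right)\Bigg\},
\]
which splits into a diagonal part $D$ (coming from $\delta_{\ell,n}$) and an off-diagonal part $E$ (coming from the Kloosterman sums).

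For the diagonal only $n=\ell$ contributes, so
\[
D = \left(\frac{q}{4\pi^{2}}\right)^{\!-\delta}\frac{\eta_{it}(\ell)}{\ell^{1/2}}\, V_{\delta,t}\!\left(\frac{4\pi^{2}\ell}{q}\right).
\]
Since $\ell \leq q^{1-4\vartheta}$, the argument $y_{0}:=4\pi^{2}\ell/q$ is at most $4\pi^{2}q^{-4\vartheta}<1$ once $q$ is sufficiently large, so the small-$y$ expansion of $V_{\delta,t}$ in Lemma~\ref{lemma:AFE} applies. Its two leading terms combine with the prefactor $(q/4\pi^{2})^{-\delta}$ via the identities $(q/4\pi^{2})^{-\delta}y_{0}^{-\delta}=\ell^{-\delta}$ and $(q/4\pi^{2})^{-\delta}y_{0}^{\delta}=(q/4\pi^{2})^{-2\delta}\ell^{\delta}$ to reproduce exactly the two main terms of the theorem, while the tail $\mathcal{O}(y_{0}^{N}(1+|t|)^{B})$ is super-polynomially small in $q$ upon choosing $N$ large.

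The main obstacle is bounding the off-diagonal $E$. The rapid-decay bound $V_{\delta,t}(y)\ll_{A}y^{-A}(1+|t|)^{B}$ for $y\geq1$ truncates the $n$-sum at $n\leq q^{1+\ve}$ with negligible error. In this range $\sqrt{\ell n}/(qc')\leq q^{-2\vartheta}/c'$ is uniformly small, so $J_{1}(x)\ll x$ applies; combined with Weil's bound $|S(\ell,n;qc')|\ll (qc')^{1/2+\ve}(\ell,n,qc')^{1/2}$, and noting that $q\nmid(\ell,n)$ (since $q$ is prime and $\ell<q$), each divisor $d\mid(\ell,n)$ must satisfy $d\mid c'$, so the sum $\sum_{c':\,d\mid c'}c'^{-3/2+\ve}\ll d^{-3/2+\ve}$ collapses the $c'$-sum to a bounded multiple of the divisor sum $\tau((\ell,n))$. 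Summing over $n$ against the moderate bound $V_{\delta,t}(y)\ll q^{|\delta|}(\log q)(1+|t|)^{B}$, all divisor-type factors are absorbed into $q^{\ve}$, and one obtains $E\ll (1+|t|)^{B}q^{-1/2+\ve}$. The delicate point is that the decisive $q^{-1/2}$ saving comes entirely from the $q$-divisibility of the Kloosterman modulus (via Weil); one must carefully balance the Bessel smallness $J_{1}(x)\ll x$ against Weil's bound, and the condition $\ell\leq q^{1-4\vartheta}$ is precisely what keeps $J_{1}$ in the small-argument regime throughout the $(n,c')$-range that matters.
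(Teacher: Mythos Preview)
Your overall strategy---approximate functional equation, then Petersson, then split into diagonal and off-diagonal---matches the paper exactly, and your diagonal analysis is fine (the paper rederives the two main terms by shifting the contour in the integral defining $V_{\delta,t}$ rather than quoting the small-$y$ expansion from Lemma~\ref{lemma:AFE}, but that is the same computation).

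The gap is in the off-diagonal estimate. If you actually carry out your sketch: with $n\le q^{1+\ve}$, Weil gives $|S(\ell,n;qc')|\ll (qc')^{1/2+\ve}(\ell,n,c')^{1/2}$ and $J_1(x)\ll x$ contributes a factor $\sqrt{\ell n}/(qc')$; after summing over $c'$ this yields $\ll \sqrt{\ell n}\,q^{-3/2+\ve}$ (the gcd factor is harmless, as you say). But then you still have to sum $\sum_{n\le q^{1+\ve}} \tau(n)\,|V_{\delta,t}(4\pi^2 n/q)|$, and the moderate bound for $V_{\delta,t}$ you invoke is essentially constant in $n$, so this sum is of genuine size $q^{1+\ve}$. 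The net outcome is
\[
E \;\ll\; (1+|t|)^B\,\ell^{1/2}\,q^{-1/2+\ve},
\]
not $q^{-1/2+\ve}$ uniformly in $\ell$. For $\ell$ near $q^{1-4\vartheta}$ this is only $q^{-2\vartheta+\ve}$; worse, when this error is propagated through the mollified second moment in \S\ref{sec:msm}, the extra $(\ell_1\ell_2)^{1/2}$ cancels the $1/(\ell_1\ell_2)^{1/2}$ from the mollifier expansion and the resulting error is of order $M^2 q^{-1/2+\ve}\gg 1$, which is useless.

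What is missing is cancellation in the $n$-sum. The paper does not spell this out either, but defers to Kowalski--Michel \cite[Appendix~A]{kowalski2000explicit} and \cite[\S2.4]{kowalski2000lower}, where one opens the Kloosterman sum and applies Voronoi/Poisson summation in $n$; this converts the long $n$-sum of length $\asymp q$ into a short dual sum and recovers the uniform $q^{-1/2+\ve}$. So your remark that ``the decisive $q^{-1/2}$ saving comes entirely from \ldots\ Weil'' is not quite right: Weil alone gives a saving that the length of the $n$-sum then consumes, and it is the additional summation transform that restores uniformity in $\ell$.
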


\begin{remark}
  One may use a similar method to obtain the asymptotic formula
  for the average over even or odd forms respectively.
  See Kowalski--Michel \cite{kowalski2000lower} for example.
\end{remark}

\begin{proof}

We now begin the proof of theorem \ref{thm:TwistedSecondMoment}.
From the approximate functional equation (lemma \ref{lemma:AFE}), we have
\begin{align*}
       &   \mathcal{A}\Big(\left\{\omega_f \lambda_f(\ell)|L(1/2+\delta+it,f)|^2\right\}; q\Big) =
            \sum_{f\in H_2(q)} \omega_f\cdot \lambda_f(\ell)|L(1/2+\delta+it,f)|^2 \\
       &\hskip 90pt =  \left(\frac{q}{4\pi^2}\right)^{-\delta} \sum_{n=1}^{\infty}\frac{\eta_{it}(n)}{n^{1/2}} V_{\delta,t}\left(\frac{4\pi^2 n}{q}\right)
         \underset{f\in H_2(q)}{{\sum}}\omega_f\cdot \lambda_f(\ell)\lambda_f(n).
\end{align*}

Applying the Petersson trace formula (lemma \ref{lemma:PTF}) we obtain that
\begin{equation}\label{eqn:D+F}
  \mathcal{A}\Big(\left\{\omega_f \lambda_f(\ell)|L(1/2+\delta+it,f)|^2\right\}; \; q\Big) = \cD + \cF,
\end{equation}
where we have the diagonal term (noting that $\ell<q$)
\begin{equation*}
  \cD := \left(\frac{q}{4\pi^2}\right)^{-\delta} \frac{\eta_{it}(\ell)}{\ell^{1/2}} V_{\delta,t}\left(\frac{4\pi^2 \ell}{q}\right),
\end{equation*}
and the off-diagonal term
\begin{equation*}
  \begin{split}
    \cF & :=  -2\pi \left(\frac{q}{4\pi^2}\right)^{-\delta} \sum_{n=1}^{\infty}\frac{\eta_{it}(n)}{n^{1/2}} V_{\delta,t}\left(\frac{4\pi^2 n}{q}\right)
         \sum_{q|c} \frac{S(\ell,n;c)}{c} J_1\left(\frac{4\pi\sqrt{\ell n}}{c}\right).
  \end{split}
\end{equation*}

\subsection{The diagonal term}

Recall that $\vartheta$ is a fixed positive real number less than $1/100$.
We first handle the case $-\frac{c}{\log q}\leq \delta \leq \vartheta$
and $\delta\neq0$. Note that for the remaining case $\delta=0$,
we can just view it as the limitation of $\delta\rightarrow0$.
Introducing the integral defining $V_{\delta,t}(y)$ in lemma \ref{lemma:AFE}, we have
\[
  \begin{split}
    \cD & = \left(\frac{q}{4\pi^2}\right)^{-\delta} \frac{\eta_{it}(\ell)}{\ell^{1/2}} \frac{1}{2\pi i} \int\limits_{3-i\infty}^{3+i\infty}
        \frac{H_{t}(s)}{H_{t}(\delta)}\;  \zeta_q(1+2s) \left(\frac{4\pi^2 \ell}{q}\right)^{-s}
        \frac{2s}{(s-\delta)(s+\delta)} \; ds.
  \end{split}
\]
For $-1/2\leq \Re(s)\leq 3$, 
we have
\[
  \frac{| \Gamma(s+1+it)\Gamma(s+1-it) |}{\Gamma(\delta+1+it)\Gamma(\delta+1-it)}
  \leq \frac{\big|\Gamma\big(\Re(s)+1\big)^2 \big|}{\Gamma(\delta+1)^2} \ll 1.
\]
Hence by shifting the contour to the line $\Re(s)=-1/2+\ve$, we obtain
\begin{equation}\label{eqn:cD=}
  \begin{split}
    \cD & = \zeta_q(1+2\delta) \frac{\eta_{it}(\ell)}{\ell^{1/2+\delta}}
           \;  + \; \zeta_q(1-2\delta) \frac{\eta_{it}(\ell)}{\ell^{1/2-\delta}}
           \frac{H_t(-\delta)}{H_t(\delta)} \left(\frac{q}{4\pi^2}\right)^{-2\delta} \\
        & \hskip 10pt  + \mathcal{O} \Bigg(\frac{\tau(\ell ) q^{-\delta}}{\ell^{1/2}}  \Bigg | \,\int\limits_{-1/2+\ve-i\infty}^{-1/2+\ve+i\infty}
                \frac{H_{t}(s)}{H_{t}(\delta)}  \zeta_q(1+2s) \left(\frac{4\pi^2 \ell}{q}\right)^{-s} \frac{2s}{(s-\delta)(s+\delta)} \; ds \,\Bigg |\;\Bigg) \\
        & = \zeta_q(1+2\delta) \frac{\eta_{it}(\ell)}{\ell^{1/2+\delta}}
           \;  + \; \zeta_q(1-2\delta) \frac{\eta_{it}(\ell)}{\ell^{1/2-\delta}}
           \frac{H_t(-\delta)}{H_t(\delta)} \left(\frac{q}{4\pi^2}\right)^{-2\delta}
           \\
        & \hskip 100pt
        +  \mathcal{O}_\varepsilon \Big(\big(1+|t|\big)^B q^{-1/2+\varepsilon}\Big).
  \end{split}
\end{equation}
Here we use the bound \eqref{eqn:H<<} for $H_t(s)/H_t(\delta)$.

\subsection{The off-diagonal term}

One can show that
\begin{equation}\label{eqn:cF<<}
  \begin{split}
    \mathcal F & \ll (1+|t|)^B q^{-1/2+\varepsilon}.
  \end{split}
\end{equation}

We won't give the details of the proof of the above estimate,
since a very similar result already appeared in
Kowalski--Michel \cite[Appendix A]{kowalski2000explicit},
and the techniques needed for the proof can be found in Kowalski--Michel \cite[\S2.4]{kowalski2000lower}.

Combining \eqref{eqn:D+F}, \eqref{eqn:cD=}, and \eqref{eqn:cF<<},
completes the proof of theorem \ref{thm:TwistedSecondMoment}.

\end{proof}


\section{Mollification near the critical point}\label{sec:msm}

\subsection{Choosing the mollifier}

We will take the same mollifier as in Kowalski--Michel \cite[Appendix A]{kowalski2000explicit}.
Let $\vartheta$ be a fixed small positive constant,
and $M\leq q^{1/2-2\vartheta}$. Fix $0 < \Upsilon < 1$. Define
\begin{equation*}
  F_{\Upsilon,M}(x) := \left\{\begin{array}{ll}
                   1, & \textrm{if}\ 0\leq x\leq M^{1-\Upsilon}, \\
                   P\left(\frac{\log(M/x)}{\log M}\right), & \textrm{if}\ M^{1-\Upsilon}\leq x\leq M, \\
                   0, & \textrm{if}\ x\geq M,
                 \end{array}\right.
\end{equation*}
with $P(y)=y/\Upsilon$.
We define the mollifier for
$L(s,f)$ by
\begin{equation}\label{eqn:M(s)}
  M(s,f) := \sum_{\ell=1}^{\infty} \frac{x_\ell(s)}{\ell^{1/2}} \lambda_f(\ell),
\end{equation}
where
\begin{equation}\label{eqn:x_l(s)}
  x_\ell(s) := \frac{\mu(\ell)}{\ell^{s-1/2}} \sum_{n=1}^{\infty}\frac{\mu^2(\ell n)F_{\Upsilon,M}(\ell n)}{n^{2s}}.
\end{equation}
We  always write $L(s,f)M(s,f)$ as $LM(s,f)$.

\vskip 3pt
In this section, we will prove the following theorem.
\begin{theorem} \label{thm:MollificationS}
  With the notations as above and $0 < \Upsilon < 1$,  $M\leq q^{1/2-2\vartheta}$, assume that
  $|t|\leq \frac{100\log\log q}{\log q}$ and that
  $-\frac{100}{\log q}\leq \delta \leq \frac{100\log\log q}{\log q}$.
  Then we have
  \[
    \begin{split}
      & \sum_{f\in H_2(q)} \omega_f |LM(1/2+\delta+it,f)|^2
       = 1  + q^{-2\delta}
            \bigg| \frac{M^{-2\delta+2it}-M^{(1-\Upsilon)(-2\delta+2it)}}
            {\Upsilon(-2\delta+2it)\log M} \bigg|^2 \\
      & \hskip 100pt + \frac{M^{-2\delta(1-\Upsilon)}-M^{-2\delta}}{(2\delta\Upsilon\log M)^2}\Big(1-q^{-2\delta}\Big)
      + \mathcal{O}\left(  \frac{\log\log q}{\log q}  M^{-2(1-\Upsilon)\delta} \right).
    \end{split}
  \]
\end{theorem}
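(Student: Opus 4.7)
The plan is to expand $|LM(s,f)|^2$ as a bilinear form in Hecke eigenvalues, apply the multiplicativity relation \eqref{eqn:HR} to reduce to single eigenvalues $\lambda_f(\ell_1\ell_2/d^2)$, invoke Theorem \ref{thm:TwistedSecondMoment} for the resulting $f$-averages, and finally evaluate the surviving arithmetic sums by writing $F_{\Upsilon,M}$ via its Mellin transform and carrying out contour integration.

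First, for $s=1/2+\delta+it$, expanding $M(s,f)\overline{M(s,f)}$ gives
\[
\sum_f\omega_f|LM(s,f)|^2 = \sum_{\ell_1,\ell_2\leq M}\frac{x_{\ell_1}(s)\overline{x_{\ell_2}(s)}}{(\ell_1\ell_2)^{1/2}}\sum_{d\,|\,(\ell_1,\ell_2)}\sum_f\omega_f\,\lambda_f(\ell_1\ell_2/d^2)|L(s,f)|^2.
\]
Since $\ell_1\ell_2/d^2\leq M^2\leq q^{1-4\vartheta}$, Theorem \ref{thm:TwistedSecondMoment} applies to each inner $f$-sum. Its error term contributes $(1+|t|)^Bq^{-1/2+\ve}\big(\sum_\ell |x_\ell(s)|/\sqrt{\ell}\big)^2\ll q^{-1/2+\ve}M^{1+\ve}$, which is absorbable into the stated error. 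The surviving part splits as $T_1+T_2$: the piece $T_1$ carries the factor $\zeta_q(1+2\delta)\eta_{it}(\ell_1\ell_2/d^2)(\ell_1\ell_2/d^2)^{-1/2-\delta}$, while $T_2$ carries the analogous expression with $\delta\mapsto -\delta$ and the extra prefactor $\frac{H_t(-\delta)}{H_t(\delta)}(q/4\pi^2)^{-2\delta}$.

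To evaluate $T_j$, I would introduce the Mellin transform $\widetilde F(w) := \int_0^\infty F_{\Upsilon,M}(x)x^{w-1}\,dx = (M^w - M^{w(1-\Upsilon)})/(\Upsilon w^2\log M)$ and write $F_{\Upsilon,M}(y) = \frac{1}{2\pi i}\int_{(c)}\widetilde F(w)y^{-w}\,dw$. Substituting this into the definition \eqref{eqn:x_l(s)} of $x_\ell$ (and into $\overline{x_\ell}$) turns each $T_j$ into a double contour integral in $(w_1,w_2)$ of a multiplicative sum over $\ell_1,\ell_2,n_1,n_2,d$. That sum factors as an Euler product over $p\ne q$ which, by direct comparison with the Euler factors of $\zeta$, equals a product of four Riemann zeta factors (encoding the poles in $w_1,w_2,s,\bar s$) times a remainder $B(w_1,w_2;\delta,t)$ that is holomorphic and uniformly bounded for $\Re(w_j)\geq -\kappa$ for some small $\kappa>0$. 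Shifting each contour to $\Re(w_j)=-\kappa$ collects residues at $w_j=0$ (from the double pole of $\widetilde F$ and from the zeta factors), with a pushed contour error of size $M^{-\kappa}$.

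The residue bookkeeping proceeds essentially as in Kowalski--Michel \cite[Appendix A]{kowalski2000explicit}: the $T_1$ residues assemble into the constant $1$ together with the ``mixed'' contribution $(M^{-2\delta(1-\Upsilon)}-M^{-2\delta})/(2\delta\Upsilon\log M)^2$; the $T_2$ residues produce the squared-modulus term $q^{-2\delta}|(M^{-2\delta+2it}-M^{(1-\Upsilon)(-2\delta+2it)})/(\Upsilon(-2\delta+2it)\log M)|^2$ and a companion piece which combines with the $T_1$ residues to form the factor $(1-q^{-2\delta})$ in the statement. The main obstacle is managing the apparent singularities at $\delta=0$ (and at $-2\delta+2it=0$) arising from $\zeta_q(1\pm 2\delta)$ and from $1/(2\delta\Upsilon\log M)$: these must be shown to cancel against numerator zeros so that the total is holomorphic in $\delta,t$ uniformly for $|\delta|,|t|\ll \log\log q/\log q$. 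This is handled by keeping contours generic until residues are collected and expanding in $\delta\log M$ only at the last step, which also accounts for the $\mathcal{O}\big(\frac{\log\log q}{\log q}M^{-2(1-\Upsilon)\delta}\big)$ remainder in the theorem.
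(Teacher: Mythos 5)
Your proposal is correct and follows essentially the same route as the paper: expand $|M|^2$ via the Hecke relations, apply Theorem \ref{thm:TwistedSecondMoment} to the twisted averages, split into the two main pieces, and evaluate each by Mellin-transforming $F_{\Upsilon,M}$ into a double contour integral whose integrand is an Euler product expressible as ratios of zeta functions, then shift contours and collect residues (the paper likewise defers to Conrey--Soundararajan and Kowalski--Michel for the residue bookkeeping, and treats the near-$\delta=0$ regime as a separate case). The only cosmetic difference is that the paper first separates $\ell_1$ and $\ell_2$ by Möbius inversion of $\eta_{it}(\ell_1\ell_2)$ before forming the Euler product, and its zeta-factor counts differ slightly from your description, but these do not affect the argument.
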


\begin{proof}
By \eqref{eqn:HR}, we have
\begin{equation}\label{eqn:|M|^2}
  \begin{split}
     & |M(1/2+\delta+it,f)|^2 \\
       & \hskip 30pt = \sum_{\ell_1=1}^\infty\sum_{\ell_2=1}^\infty \frac{x_{\ell_1}(1/2+\delta+it)x_{\ell_2}(1/2+\delta-it)}
            {\ell_1^{1/2}\ell_2^{1/2}}
            \lambda_{f}(\ell_1)\lambda_f(\ell_2) \\
       & \hskip 30pt = \sum_{\ell_1=1}^\infty\sum_{\ell_2=1}^\infty  \sum_{d=1}^\infty \frac{
       \varepsilon_{q}(d)}{d} \frac{x_{d\ell_1}(1/2+\delta+it)x_{d\ell_2}(1/2+\delta-it)}
            {\ell_1^{1/2}\ell_2^{1/2}}
            \lambda_{f}(\ell_1\ell_2) \\
       & \hskip 30pt = \sum_{\ell_1=1}^\infty\sum_{\ell_2=1}^\infty  \sum_{d=1}^\infty \frac{1}{d} \frac{x_{d\ell_1}(1/2+\delta+it)x_{d\ell_2}(1/2+\delta-it)}
            {\ell_1^{1/2}\ell_2^{1/2}}
            \lambda_{f}(\ell_1\ell_2).
  \end{split}
\end{equation}
Here for the last step we use the fact that the sum over $d$ is actually supported on $[1,M]$
and that $M<q$ and $q$ being prime.
Hence we get
\begin{align*}
    &  \cA\Big(\left\{\omega_f |LM(1/2+\delta+it,f)|^2\right\};\; q\Big)\\
    & \hskip 60pt=  \sum_{\ell_1=1}^\infty\sum_{\ell_2=1}^\infty  \sum_{d=1}^\infty \frac{1}{d} \frac{x_{d\ell_1}(1/2+\delta+it)x_{d\ell_2}(1/2+\delta-it)}
            {\ell_1^{1/2}\ell_2^{1/2}} \\
    &\hskip 180pt \cdot
            \cA\Big(\left\{\omega_f\lambda_f(\ell_1\ell_2)|L(1/2+\delta+it,f)|^2\right\}; \; q\Big).
\end{align*}

There are two cases we need to consider:
\begin{equation*}
  \begin{split}
    & \mathbf{(I)}\left\{\begin{split}
        & \frac{-100}{\log q}\leq \delta \leq \frac{100\log\log q}{\log q}
        \quad\textrm{and}\quad |\delta|\geq \frac{100}{\log q \log\log q}, \\
        & |t| \leq \frac{100\log\log q}{\log q};
      \end{split}\right.
  \end{split}
\end{equation*}
and
\begin{equation*}
  \begin{split}
     \mathbf{(II)}\left\{\begin{split}
        & |\delta|\leq \frac{100}{\log q \log\log q}, \\
        & \frac{1}{100\log q} \leq |t| \leq \frac{100}{\log q}.
      \end{split}\right.
  \end{split}
\end{equation*}
We will focus on the first case $ \mathbf{(I)}$, which we will assume
in this section from now on. Note that the other case
can be handled by combining the ideas of
Conrey and Soundararajan \cite[\S6]{conrey2002real}.
Define
\begin{equation}\label{eqn:nu}
  \nu_{\delta}(r):= \frac{1}{r} \sum_{d|r} \frac{\mu(d)}{d^{1+2\delta}}.
\end{equation}
By theorem \ref{thm:TwistedSecondMoment}, we obtain
\begin{equation}\label{eqn:AtoS}
  \begin{split}
    &\mathcal{A}\Big(\left\{\omega_f |LM(1/2+\delta+it,f)|^2\right\};\; q\Big) \\
    & \hskip 5pt =  \sum_{\ell_1=1}^\infty\sum_{\ell_2=1}^\infty  \sum_{d=1}^\infty \frac{1}{d} \frac{x_{d\ell_1}(1/2+\delta+it)x_{d\ell_2}(1/2+\delta-it)}
            {\ell_1^{1/2}\ell_2^{1/2}}
            \cdot \Bigg[ \zeta_{q}(1+2\delta) \frac{\eta_{it}(\ell_1\ell_2)}{(\ell_1\ell_2)^{1/2+\delta}} \\
    & \hskip 30pt
    + \zeta_{q}(1-2\delta) \frac{\eta_{it}(\ell_1\ell_2)}{(\ell_1\ell_2)^{1/2-\delta}}
    \frac{H_t(-\delta)}{H_t(\delta)} \left(\frac{q}{4\pi^2}\right)^{-2\delta} \Bigg]
     + \mathcal{O}_{\varepsilon}\Big((1+|t|)^B Mq^{-1/2+\ve}\Big)  \\
    & \hskip 5pt =  \mathcal{S}_1 + \mathcal{S}_2 +
    \mathcal{O}_{\varepsilon}\Big((1+|t|)^B q^{-2\vartheta+\varepsilon}\Big),
  \end{split}
\end{equation}
where
\begin{align*}
  \mathcal{S}_1 & := \zeta_{q}(1+2\delta) \sum_{r=1}^{\infty} \nu_{\delta}(r)
   \bigg| \sum_{\ell} \frac{\eta_{it}(\ell)}{\ell^{1+\delta}} x_{r\ell}(1/2+\delta+it) \bigg|^2,  \\
  \mathcal{S}_2 & := \frac{H_t(-\delta)}{H_t(\delta)} \left(\frac{q}{4\pi^2}\right)^{-2\delta}
  \zeta_{q}(1-2\delta) \sum_{r=1}^{\infty} \nu_{-\delta}(r)
   \bigg| \sum_{\ell} \frac{\eta_{it}(\ell)}{\ell^{1-\delta}} x_{r\ell}(1/2+\delta+it) \bigg|^2.
\end{align*}
Here we use the following M\"{o}bius inversion formula
to separate the variables $\ell_1$ and $\ell_2$,
\[
  \eta_{it}(mn) = \sum_{d|(m,n)} \mu(d) \eta_{it}\left(\frac{m}{d}\right)\eta_{it}\left(\frac{n}{d}\right).
\]

\subsection{The treatment of $\mathcal{S}_1$}
\label{subsec:S1}
For $\mathcal{S}_1$, we will follow Conrey and Soundararajan's method in \cite{conrey3000real}
to show
\begin{equation}\label{eqn:S1=}
  \mathcal{S}_1 = 1 + \frac{M^{-2(1-\Upsilon)\delta}-M^{-2\delta}}{(2\delta\Upsilon\log M)^2}
  + \mathcal{O}\left( \frac{\log\log q}{\log q}  M^{-2(1-\Upsilon)\delta} \right).
\end{equation}
One may also use the calculation in
Kowalski--Michel \cite[Apendix D]{kowalski2000explicit} to prove it.
Let $\Psi_{\Upsilon,M}(w)$ 
be the Mellin transform of $F_{\Upsilon,M}(x)$.
We have
\begin{equation}\label{eqn:Psi}
  \Psi_{\Upsilon,M}(w) = \frac{M^{w}-M^{(1-\Upsilon)w}}{\Upsilon w^2 \log M}.
\end{equation}
It has a simple pole at $w=0$ with residue $1$.
By Mellin inversion formula we have
\[
  F_{\Upsilon,M}(x) = \frac{1}{2\pi i} \int\limits_{\alpha-i\infty}^{\alpha+i\infty}
  \Psi_{\Upsilon,M}(w) x^{-w} dw,
\]
for any constant $\alpha>0$.
We have
\begin{align*}
  \mathcal{S}_1 & = \zeta_{q}(1+2\delta) \sum_{r=1}^{\infty} \nu_{\delta}(r)
   \bigg| \sum_{\ell} \frac{\eta_{it}(\ell)}{\ell^{1+\delta}} \frac{\mu(r\ell)}{(r\ell)^{\delta+it}} \sum_{n=1}^{\infty}\frac{\mu^2(r\ell n)F_{\Upsilon,M}(r\ell n)}{n^{1+2\delta+2it}} \bigg|^2 \\
   & = \zeta_{q}(1+2\delta) \sum_{r=1}^{\infty} \frac{\nu_{\delta}(r)}{r^{2\delta}}
   \sum_{\ell_1} \frac{\mu(r\ell_1)\eta_{it}(\ell_1)}{\ell_1^{1+2\delta+it}}  \sum_{n_1=1}^{\infty}\frac{\mu^2(r\ell_1 n_1)F_{\Upsilon,M}(r\ell_1 n_1)}{n_1^{1+2\delta+2it}}  \\
   & \hskip 80pt \cdot \sum_{\ell_2} \frac{\mu(r\ell_2)\eta_{it}(\ell_2)}{\ell_2^{1+2\delta-it}} \sum_{n_2=1}^{\infty}\frac{\mu^2(r\ell_2 n_2)F_{\Upsilon,M}(r\ell_2 n_2)}{n_2^{1+2\delta-2it}} \\
   & = \zeta_{q}(1+2\delta) \frac{1}{(2\pi i)^2} \int\limits_{\alpha-i\infty}^{\alpha+i\infty}\int\limits_{\alpha-i\infty}^{\alpha+i\infty}
       G_1(w_1,w_2,\delta,t)\Psi_{\Upsilon,M}(w_1)\Psi_{\Upsilon,M}(w_2) dw_1 dw_2,
\end{align*}
where
\[
  \begin{split}
    & G_1(w_1,w_2,\delta,t) := \sum_{r=1}^{\infty} \frac{\nu_{\delta}(r)}{r^{2\delta+w_1+w_2}}
        \sum_{\ell_1}\sum_{n_1}\sum_{\ell_2}\sum_{n_2} \\
    & \hskip 120pt \cdot
        \frac{\mu^2(r\ell_1 n_1)\mu(r\ell_1)\eta_{it}(\ell_1)
        \mu^2(r\ell_2 n_2)\mu(r\ell_2)\eta_{it}(\ell_2)}
        {\ell_1^{1+w_1+2\delta+it}n_1^{1+w_1+2\delta+2it}
        \ell_2^{1+w_2+2\delta-it}n_2^{1+w_2+2\delta-2it}}.
  \end{split}
\]
To deal with $G_1(w_1,w_2,\delta,t)$, we will use the following easy lemma.
\begin{lemma}\label{lemma:MultiplicativeFunction1}
  Let $f,g,h$ be three multiplicative functions. Denote
  \[
    F(k) = \sum_{k=rmn} \mu^2(rm)\mu^2(rn) h(r) f(m) g(n).
  \]
  Then $F$ is a multiplicative function.
\end{lemma}
\begin{proof}
  We first note that $F$ is supported on cubic free numbers.
  Let $k=ab^2$, where $a,b$ are square-free numbers.
  We only need to consider the decompostion of $k=rmn$ with
  $b|m$ and $b|n$; otherwise the contribution will be zero
  because of $\mu^2$. Hence
  \[
    F(k)=F(ab^2)=f(b)g(b) H(a),
    \quad \textrm{where} \quad
    H(a) = \sum_{a=rmn} h(r) f(m) g(n).
  \]
  Now it's not hard to check that $F$ is multiplicative.
\end{proof}

We can rearrange the terms in $G_1(w_1,w_2,\delta,t)$ and view it as a
summation over $k=rmn$ where $m=\ell_1n_1$ and $n=\ell_2n_2$. Hence by
lemma \ref{lemma:MultiplicativeFunction1}, we have
\[
  \begin{split}
    & G_1(w_1,w_2,\delta,t) \\
    & \hskip 10pt = \prod_{p} \Bigg( 1 + \frac{1-p^{-1-2\delta}}{p^{1+2\delta+w_1+w_2}}
        -\frac{\eta_{it}(p)}{p^{1+w_1+2\delta+it}}-\frac{\eta_{it}(p)}{p^{1+2\delta+w_2-it}}
        +\frac{1}{p^{1+w_1+2\delta+2it}}+\frac{1}{p^{1+w_2+2\delta-2it}} \\
    & \hskip 80pt + \frac{\eta_{it}(p)^2}{p^{2+w_1+w_2+4\delta}}
        + \frac{1}{p^{2+w_1+w_2+4\delta}}
        -\frac{\eta_{it}(p)}{p^{2+w_1+w_2+4\delta-it}}
        -\frac{\eta_{it}(p)}{p^{2+w_1+w_2+4\delta+it}} \Bigg) \\
    & \hskip 10pt =: \frac{\zeta(1+2\delta+w_1+w_2)}{\zeta(1+w_1+2\delta)\zeta(1+w_2+2\delta)} H_1(w_1,w_2,\delta,t).
  \end{split}
\]
Here the Euler product defining $H_1(w_1,w_2,\delta,t)$ converges absolutely for
\[
  \min\big\{\Re(w_1),\Re(w_2),\Re(w_1+w_2)\big\} > -1/2.
\]

To evaluate the integral in $\mathcal{S}_1$, we shift both contours to the line
$\Re(w_1)=\Re(w_2)=(2A+1)/\log q$, and truncate the $w_2$
integral at $|\Im(w_2)|\leq q$ with error $\mathcal{O}(q^{-1+\varepsilon})$.
Then we shift the $w_1$ integral to the contour $\mathcal{C}_1$ given by
\[
  \mathcal{C}_1 := \left\{ w_1 \; \Big| \;
  \Re(w_1) = -2\delta - \frac{c}{\log^{3/4}\big(2+|\Im(w_1)|\big)} \right\},
\]
where $c>0$ is a suitable constant such that $\zeta(s)$ has no zero in the region
$\Re(s)\geq 1-c\log^{-3/4}\big(2+|\Im(s)|\big)$ and
$c\log^{-3/4}(2)<1/3$.
If $q$ is sufficiently large, we encounter poles at $w_1=0$ and $w_1=-2\delta-w_2$.
The first pole at $w_1=0$ yields a residue
\begin{equation}\label{eqn:Res0S1}
  \begin{split}
    & \frac{1}{\zeta(1+2\delta)} \frac{1}{2\pi i}
      \int\limits_{(2A+1)/\log q -iq}^{(2A+1)/\log q +iq}
      H_1(0,w_2,\delta,t)  \Psi_{\Upsilon,M}(w_2) dw_2  \\
    & \hskip 120pt = \frac{1}{\zeta(1+2\delta)} \bigg( 1 + \mathcal{O}_{\varepsilon} \left(M^{-(1-\Upsilon)/2+\varepsilon}\right)\bigg),
  \end{split}
\end{equation}
by moving the $w_2$-contour to $\mathcal{C}_{21}$ where
\[
  \begin{split}
    \mathcal{C}_{21} := \left[\frac{2A+1}{\log q}-iq,-\frac{1}{2}-iq\right] \cup
    \left[-\frac{1}{2}-iq,-\frac{1}{2}+iq\right] \cup \left[-\frac{1}{2}+iq,\frac{2A+1}{\log q} +iq\right],
  \end{split}
\]
and estimating the integration trivially.

The pole at $w_1=-2\delta-w_2$ leaves the residue
\begin{equation}\label{eqn:Res1S1}
  \frac{1}{2\pi i} \int\limits_{(2A+1)/\log q -iq}^{(2A+1)/\log q +iq}
     \frac{H_1(-2\delta-w_2,w_2,\delta,t) }{\zeta(1-w_2)\zeta(1+2\delta+w_2)} \Psi_{\Upsilon,M}(-2\delta-w_2) \Psi_{\Upsilon,M}(w_2) dw_2.
\end{equation}
Note that by \eqref{eqn:Psi} we have
\begin{equation}\label{eqn:Psi*Psi}
  \Psi_{\Upsilon,M}(-2\delta-w_2) \Psi_{\Upsilon,M}(w_2)
  = \frac{M^{-2\delta}+M^{-2\delta(1-\Upsilon)}-M^{-\Upsilon w_2-2\delta}-M^{\Upsilon w_2-2\delta(1-\Upsilon)}}{w_2^2(2\delta+w_2)^2(\Upsilon\log M)^2}.
\end{equation}
We can see that the contributions from the first three terms on the right hand side of the above identity, that is, $\frac{M^{-2\delta}+M^{-2\delta(1-\Upsilon)}-M^{-\Upsilon w_2-2\delta}}{w_2^2(2\delta+w_2)^2(\Upsilon\log M)^2}$, are small.
Indeed, by shifting the integration to the contour $\mathcal{C}_{22}$ where
\[
  \begin{split}
    & \mathcal{C}_{22} := \left[\frac{2A+1}{\log q}-iq,\frac{2A+1}{\log q}-i\right]\cup
      \left[\frac{2A+1}{\log q}-i,\frac{1}{4}-i\right] \cup
      \left[\frac{1}{4}-i,\frac{1}{4}+i\right] \\
    & \hskip80pt \cup
      \left[\frac{1}{4}+i,\frac{2A+1}{\log q}+i\right] \cup
      \left[\frac{2A+1}{\log q}+i,\frac{2A+1}{\log q}+iq\right],
  \end{split}
\]
and using the standard bounds for the Riemann zeta function,
one can show that the contributions from the first three terms are bounded by
\[
  \mathcal{O}\left( \frac{M^{-2\delta(1-\Upsilon)}}{(\log M)^2}\right).
\]
Now we consider the contribution from $\frac{-M^{\Upsilon w_2-2\delta(1-\Upsilon)}}{w_2^2(2\delta+w_2)^2(\Upsilon\log M)^2}$
to \eqref{eqn:Res1S1}.
We move the contour of integration to the left $\mathcal{C}_{23}$ where
\[
  \begin{split}
    & \mathcal{C}_{23} := \left[\frac{2A+1}{\log q}-iq,-2\delta-\frac{c}{\log^{3/4} q}-iq\right]
    \\
    & \hskip 5pt \cup
      \left[-2\delta-\frac{c}{\log^{3/4} q}-iq,-2\delta-\frac{c}{\log^{3/4} q}+iq\right] \cup
      \left[-2\delta-\frac{c}{\log^{3/4} q}+iq,\frac{2A+1}{\log q}+iq\right],
  \end{split}
\]
for the same constant $c$ in the definition of $\mathcal{C}_1$,
and pick up contributions from the poles at $w_2=0$ and $w_2=-2\delta$.
Similarly, we find the integration on $\mathcal{C}_{23}$ can be bounded by
\[
  \mathcal{O}\Big( M^{-2\delta(1-\Upsilon)} e^{-c'\log^{1/4}q} \Big),
  \quad \mbox{for some $c'>0$.} 
\]
The pole at $w_2=0$ contributes
\begin{equation}\label{eqn:Res11S1}
  \frac{1}{\zeta(1+2\delta)}\frac{M^{-2\delta(1-\Upsilon)}}{(2\delta \Upsilon \log M)^2} H_1(-2\delta,0,\delta,t).
\end{equation}
Lastly the pole at $w_2=-2\delta$ leaves the residue
\begin{equation}\label{eqn:Res12S1}
  -\frac{1}{\zeta(1+2\delta)}\frac{M^{-2\delta}}{(2\delta \Upsilon \log M)^2} H_1(0,-2\delta,\delta,t).
\end{equation}

The remaining integral, for $w_1$ on $\mathcal{C}_1$, is bounded by
\[
  \mathcal{O}\Big( M^{-2\delta(1-\Upsilon)} e^{-c'\log^{1/4}q} \Big),
  \quad \mbox{for some $c'>0$.}
\]
The proof that $M\leq q^{1/2-2\vartheta}$ now follows upon  combining \eqref{eqn:Res0S1}, \eqref{eqn:Res11S1}, and \eqref{eqn:Res12S1},
and noting that $H_1(0,0,\delta,t)=1$.

\subsection{The treatment of $\mathcal{S}_2$}

We now consider $\mathcal{S}_2$.
Following a similar argument as for $\mathcal{S}_1$, we have
\[
  \begin{split}
    \mathcal{S}_2 & = \frac{H_t(-\delta)}{H_t(\delta)} \left(\frac{q}{4\pi^2}\right)^{-2\delta}
    \zeta_{q}(1-2\delta) \sum_{r=1}^{\infty} \nu_{-\delta}(r)
    \frac{\mu^2(r)}{r^{2\delta}} \sum_{\ell_1=1}^{\infty}\sum_{n_1=1}^{\infty} \frac{\eta_{it}(\ell_1)\mu(\ell_1)\mu^2(r\ell_1 n_1)}{\ell_1^{1+it}n_1^{1+2\delta+2it}}  \\
    & \hskip 80pt \cdot \sum_{\ell_2=1}^{\infty}\sum_{n_2=1}^{\infty} \frac{\eta_{it}(\ell_2)\mu(\ell_2)\mu^2(r\ell_2 n_2)}{\ell_2^{1-it}n_2^{1+2\delta-2it}} F_{\Upsilon,M}(r\ell_1 n_1) F_{\Upsilon,M}(r\ell_2 n_2) \\
    & =  \frac{H_t(-\delta)}{H_t(\delta)} \left(\frac{q}{4\pi^2}\right)^{-2\delta}
       \zeta_{q}(1-2\delta) \\
    & \hskip 80pt \cdot
       \frac{1}{(2\pi i)^2} \int\limits_{\alpha-i\infty}^{\alpha+i\infty}\int\limits_{\alpha-i\infty}^{\alpha+i\infty}
       G_2(w_1,w_2,\delta,t)\Psi_{\Upsilon,M}(w_1)\Psi_{\Upsilon,M}(w_2) dw_1 dw_2,
  \end{split}
\]
where
\[
  \begin{split}
    & G_2(w_1,w_2,\delta,t) \\
    & \hskip 5pt := \sum_{r} \frac{\mu^2(r)\nu_{-\delta}(r)}{r^{1+2\delta+w_1+w_2}}
     \sum_{\ell_1}\sum_{n_1}\sum_{\ell_2}\sum_{n_2}
     \frac{\mu^2(r\ell_1n_1)\mu(\ell_1)\eta_{it}(\ell_1)
     \mu^2(r\ell_2n_2)\mu(\ell_2)\eta_{it}(\ell_2)}
     {\ell_1^{1+w_1+it}n_1^{1+w_1+2\delta+2it}
     \ell_2^{1+w_2-it}n_1^{1+w_2+2\delta-2it}} \\
    & \hskip 5pt = \prod_{p} \Bigg( 1 + \frac{1-p^{-1+2\delta}}{p^{1+2\delta+w_1+w_2}}
        -\frac{\eta_{it}(p)}{p^{1+w_1+it}}-\frac{\eta_{it}(p)}{p^{1+w_2-it}}
        +\frac{1}{p^{1+w_1+2\delta+2it}}+\frac{1}{p^{1+w_2+2\delta-2it}} \\
    & \hskip 45pt + \frac{\eta_{it}(p)^2}{p^{2+w_1+w_2}}
        + \frac{1}{p^{2+w_1+w_2+4\delta}}
        -\frac{\eta_{it}(p)}{p^{2+w_1+w_2+2\delta-it}}
        -\frac{\eta_{it}(p)}{p^{2+w_1+w_2+2\delta+it}} \Bigg) \\
    & \hskip 5pt =: \frac{\zeta(1+2\delta+w_1+w_2)\zeta(1+w_1+2\delta+2it)\zeta(1+w_2+2\delta-2it)}
    {\zeta(1+w_1)\zeta(1+w_1+2it)\zeta(1+w_2)\zeta(1+w_2-2it)} H_2(w_1,w_2,\delta,t).
  \end{split}
\]
Here the Euler product defining $H_2(w_1,w_2,\delta,t)$ converges absolutely for
\[
  \min\big\{\Re(w_1),\Re(w_2),\Re(w_1+w_2)\big\} > -1/2.
\]

To estimate the integral in $\mathcal{S}_2$, we again shift the $w_1$ and $w_2$ contours to
the lines $\Re(w_1)=\Re(w_2)=\frac{2A+1}{\log q}$.
We now truncate the $w_2$ contour at $\Im(w_2)\leq q$,
and shift the $w_1$ contour to $\mathcal{C}_1'$ given by
\[
  \mathcal{C}_1' := \left\{ w_1 \;\Bigg |\;
  \Re(w_1)=-\frac{c}{\log^{3/4}\big(2+|\Im(w_1+2it)|\big)} \right\},
\]
for the same constant $c$ in the definition of $\mathcal{C}_1$.
If $q$ is sufficiently large, we encounter poles at $w_1=-2\delta-2it$ and $w_1=-2\delta-w_2$.
The first pole at $w_1=-2\delta-2it$ yields a residue
\begin{equation}\label{eqn:Res0S2}
  \begin{split}
    & \frac{1}{\zeta(1-2\delta)\zeta(1-2\delta-2it)} \cdot \frac{1}{2\pi i}
      \int\limits_{(2A+1)/\log q -iq}^{(2A+1)/\log q +iq} \frac{\zeta(1+w_2+2\delta-2it)}{\zeta(1+w_2)} \\
    & \hskip 125pt \cdot  H_2(0,w_2,\delta,t)  \Psi_{\Upsilon,M}(-2\delta-2it)
    \Psi_{\Upsilon,M}(w_2) dw_2.
  \end{split}
\end{equation}
We now move the $w_2$-contour to $\mathcal{C}_{21}'$ where
\[
  \begin{split}
    & \mathcal{C}_{21}' := \left[\frac{2A+1}{\log q}-iq,\alpha_0-iq\right]\cup
      \left[\alpha_0-iq,\alpha_0+iq\right] \cup
      \left[\alpha_0+iq,\frac{2A+1}{\log q}+iq\right]
  \end{split}
\]
with $\alpha_0=\max\left\{-\frac{c}{\log^{3/4} q},-2\delta-\frac{c}{\log^{3/4} q}\right\}$,
pick up the pole at $w_2=-2\delta+2it$ when $q$ is large enough,
and estimate the remaining integration trivially.
By doing so, we find that \eqref{eqn:Res0S2} is equal to
\begin{equation}\label{eqn:Res00S2}
  \begin{split}
    & \frac{H_2(0,-2\delta+2it,\delta,t)}{\zeta(1-2\delta)\zeta(1-2\delta-2it)\zeta(1-2\delta+2it)}
      \Psi_{\Upsilon,M}(-2\delta-2it)  \Psi_{\Upsilon,M}(-2\delta+2it) \\
    & \hskip 240pt + \mathcal{O}\left( M^{-2\delta(1-\Upsilon)} e^{-c'\log^{1/4}q} \right),
  \end{split}
\end{equation}
for some constant $c'>0$ depending on $c$ and $\Upsilon$.

The pole at $w_1=-2\delta-w_2$ leaves the residue
\begin{equation}\label{eqn:Res1S2}
  \begin{split}
    & \frac{1}{2\pi i} \int\limits_{(2A+1)/\log q -iq}^{(2A+1)/\log q +iq}
     \frac{\zeta(1-w_2+2it)\zeta(1+w_2+2\delta-2it)H_2(-2\delta-w_2,w_2,\delta,t) }{\zeta(1-2\delta-w_2)\zeta(1-w_2-2\delta+2it)\zeta(1+w_2)\zeta(1+w_2-2it)}\\
    & \hskip 240pt \cdot \Psi_{\Upsilon,M}(-2\delta-w_2) \Psi_{\Upsilon,M}(w_2) dw_2.
  \end{split}
\end{equation}
Note that we have \eqref{eqn:Psi*Psi}.
We can see that the contributions from the first three terms there are small again.
Indeed, by the same method as in the treatment of $\mathcal{S}_1$ we see that
the contributions from the first three terms are
\[
  \mathcal{O}\left( \frac{M^{-2\delta(1-\Upsilon)}}{(\log M)^2}\right).
\]
Now we consider the contribution from $\frac{-M^{\Upsilon w_2-2\delta(1-\Upsilon)}}{w_2^2(2\delta+w_2)^2(\Upsilon\log M)^2}$
to \eqref{eqn:Res1S2}.
We move the contour of integration to the left $\mathcal{C}_{23}'$ where
\[
  \begin{split}
    & \mathcal{C}_{23}' := \left[\frac{2A+1}{\log q}-iq,-\frac{c}{\log^{3/4} q}-iq\right]\cup
      \left[-\frac{c}{\log^{3/4} q}-iq,-\frac{c}{\log^{3/4} q}+iq\right] \\
    & \hskip 50pt \cup
      \left[-\frac{c}{\log^{3/4} q}+iq,\frac{2A+1}{\log q}+iq\right],
  \end{split}
\]
for the same constant $c$ in the definition of $\mathcal{C}_1$,
and pick up contributions from the poles at $w_2=0$ and $w_2=-2\delta$.
Similarly, we find the integration on $\mathcal{C}_{23}'$ can be bounded by
\[
  \mathcal{O}\Big( M^{-2\delta(1-\Upsilon)} e^{-c'\log^{1/4}q} \Big),
  \quad \mbox{for some $c'>0$.}
\]
The pole at $w_2=0$ contributes
\begin{equation}\label{eqn:Res11S2}
  -\frac{1}{\zeta(1-2\delta)}\frac{M^{-2\delta(1-\Upsilon)}}{(2\delta \Upsilon \log M)^2} H_2(-2\delta,0,\delta,t).
\end{equation}
Lastly the pole at $w_2=-2\delta$ leaves the residue
\begin{equation}\label{eqn:Res12S2}
  \frac{1}{\zeta(1+2\delta)}\frac{M^{-2\delta}}{(2\delta \Upsilon \log M)^2} H_2(0,-2\delta,\delta,t).
\end{equation}

The remaining integral, for $w_1$ on $\mathcal{C}_{1}'$, is bounded by
\[
  \mathcal{O}\Big( M^{-2\delta(1-\Upsilon)} e^{-c'\log^{1/4}q} \Big),
  \quad \mbox{for some $c'>0$.}
\]
Combining \eqref{eqn:Res00S2}, \eqref{eqn:Res11S2}, and \eqref{eqn:Res12S2},
and noting that $H_2(0,0,0,0)=1$., we obtain
\begin{equation}\label{eqn:S2=}
  \begin{split}
     \mathcal{S}_2 & = q^{-2\delta}
            \bigg| \frac{M^{-2\delta+2it}-M^{(1-\Upsilon)(-2\delta+2it)}}
            {\Upsilon(-2\delta+2it)\log M} \bigg|^2
            \\
      & \hskip 100pt   - q^{-2\delta} \frac{M^{-2\delta(1-\Upsilon)}-M^{-2\delta}}{(2\delta\Upsilon\log M)^2}
      + \mathcal{O}\left(  \frac{\log\log q}{\log q}  M^{-2(1-\Upsilon)\delta} \right).
  \end{split}
\end{equation}
Finally, by \eqref{eqn:AtoS}, \eqref{eqn:S1=}, and \eqref{eqn:S2=},
 this completes  the proof of theorem \ref{thm:MollificationS}.

\end{proof}


\section{Mollification away from the critical line} \label{sec:mollification_away}

In this section, we will prove  theorem  \ref{thm:mollificationL_h} which is not  needed for the proof of our main theorems. However, the results of this section will serve as a prototype for a similar result (without the harmonic weights) which is actually needed in \S\ref{subsec:SM_L}.

\begin{theorem}\label{thm:mollificationL_h}
  With notations as in \S\ref{sec:msm}, assume $s=\frac12+\delta+it$ where
  $$\frac{1}{\log q} \;\leq\; \delta \;\leq\; \frac12+\frac{10 \log\log q}{(1-\Upsilon)\log q}.$$ Then there exists
  an absolute constant $B>0$ such that for any $0<a<2(1-\Upsilon)$ we have
  \[
    \sum_{f\in H_2(q)} \omega_f \big(|LM(s,f)|^2 - 1\big) \,
    \ll_{\Upsilon,\vartheta,a,B} \; (1+|t|)^B M^{-a\delta}.
  \]
\end{theorem}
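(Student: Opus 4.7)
The plan is to follow closely the structure of Section 4, extending the analysis to the enlarged range of $\delta$ via the convexity principle. I would split the proof into two subranges: a small range $1/\log q \leq \delta \leq 100(\log\log q)/\log q$ where Theorem \ref{thm:MollificationS} applies directly, and a complementary range where I extend the arguments of Section 4 by hand.

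In the small range I would simply invoke Theorem \ref{thm:MollificationS}. Each of the two non-constant main terms appearing there, namely $q^{-2\delta}|\Psi_{\Upsilon,M}(-2\delta+2it)|^2$ and $(M^{-2\delta(1-\Upsilon)}-M^{-2\delta})(1-q^{-2\delta})/(2\delta\Upsilon\log M)^2$, together with the error term, is bounded by $O(M^{-a\delta})$ for any $a<2(1-\Upsilon)$: under the assumption $\delta \geq 1/\log q$ and $\log M \asymp \log q$, we have $\delta\log M \gg 1$, so the denominators $(\delta\log M)^2$ can be absorbed into a constant, while the numerators are already of the form $M^{-2(1-\Upsilon)\delta}$.

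For the larger range $\delta \geq 100(\log\log q)/\log q$, I would re-run the argument from scratch: expand $|LM|^2$ using the mollifier identity derived in Section 4 and apply the Petersson trace formula. The off-diagonal contribution is controlled by the same Kloosterman-sum argument as in the proof of Theorem \ref{thm:TwistedSecondMoment}, yielding $O((1+|t|)^B q^{-\ve})$, which is negligible. The diagonal splits as $\mathcal{S}_1 + \mathcal{S}_2$ as in Section 4, each a double Mellin integral involving $\Psi_{\Upsilon,M}(w_1)\Psi_{\Upsilon,M}(w_2)$ and an Euler product. I would shift contours as in \S\S4.2--4.3, but since $\delta$ is no longer small, the residues and remainder integrals are now estimated \emph{directly} using the bound $|\Psi_{\Upsilon,M}(w)| \ll M^{\Re(w)}/(|w|^2\log M)$ together with subconvexity bounds for $\zeta(s)$ on vertical lines: this is where the convexity principle enters, and it yields the desired $M^{-a\delta}$ decay.

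The main obstacle is the behaviour of $\mathcal{S}_2$ as $\delta$ approaches $1/2$, where the factor $\zeta(1-2\delta)$ that arose naturally in the residue calculus of Section 4.3 becomes singular. This singularity must be handled either by shifting contours so as to avoid the relevant pole, or by verifying that it is cancelled against the vanishing of a Mellin transform $\Psi_{\Upsilon,M}$ at a corresponding point. Once this interplay is pinned down, the claimed estimate $O((1+|t|)^B M^{-a\delta})$ follows uniformly in $\delta$ throughout the stated range.
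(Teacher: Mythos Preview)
Your approach differs substantially from the paper's, which is much shorter. The paper observes the elementary identity
\[
|LM(s,f)|^2 - 1 \;=\; |LM(s,f)-1|^2 \;+\; 2\,\Re\big(LM(s,f)-1\big),
\]
so that the theorem reduces immediately to bounding the harmonic averages of $|LM(s,f)-1|^2$ and of $LM(s,f)-1$ separately. For the second moment of $LM-1$ the paper simply quotes Kowalski--Michel \cite[Corollary~1]{kowalski2000explicit}, valid for all $\delta \geq 1/(2\log q)$ and any $a<2(1-\Upsilon)$. For the first moment, the paper proves $\sum_f \omega_f(LM(s,f)-1) \ll (1+|t|)^B M^{-a\delta}$ by first establishing it on the line $\Re(s) = 1 + \frac{10\log\log q}{(1-\Upsilon)\log q}$ (there $LM(s,f)-1$ has an absolutely convergent Dirichlet expansion supported on $n>M^{1-\Upsilon}$, and one applies Petersson's formula termwise after a smoothing), and then interpolating to smaller $\delta$ via the second-moment bound and convexity. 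No re-derivation of the twisted second moment or of the $\mathcal S_1,\mathcal S_2$ analysis is needed at all.

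Your plan for the small range via Theorem~\ref{thm:MollificationS} is fine. For the large range, however, your sketch would require extending Lemma~\ref{lemma:AFE} and Theorem~\ref{thm:TwistedSecondMoment} beyond their stated hypothesis $\delta \le \vartheta$, which you do not address. More to the point, the obstacle you single out is a phantom: $\zeta(1-2\delta)$ has its pole at $\delta=0$, not at $\delta=1/2$ (where it equals $\zeta(0)=-1/2$), so no special cancellation is needed there. Your route is plausible in principle but considerably heavier than the paper's two-line reduction, and as written it leaves the genuine extension issues (the range of validity of the approximate functional equation and of the twisted second moment) unhandled while worrying about a nonexistent singularity.
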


\begin{proof}[Proof of theorem \ref{thm:mollificationL_h}]
The proof follows easily from lemmas \ref{lemma:SecondMomentL_h}, \ref{lemma:FirstMomentL_h} which are stated and proved below.
\end{proof}

\begin{lemma}\label{lemma:SecondMomentL_h}
  With notations as above, assume $s=1/2+\delta+it$ with
  $\delta\geq 1/(2\log q)$. Then for any $0<a<2(1-\Upsilon)$, we have
  \[
    \sum_{f\in H_2(q)} \omega_f |LM(s,f)-1|^2 \, \ll_{\Upsilon,\vartheta,a,B} \; (1+|t|)^B M^{-a\delta}.
  \]
\end{lemma}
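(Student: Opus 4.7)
First I would expand the square,
\[
|LM(s,f)-1|^2 \;=\; |LM(s,f)|^2 \;-\; LM(s,f) \;-\; \overline{LM(s,f)} \;+\; 1,
\]
and use the identity $\overline{LM(s,f)} = LM(\bar s,f)$. This is valid because $\lambda_f(n)\in\bbR$ and the coefficients $x_\ell(s)$ of \eqref{eqn:x_l(s)} are real on the real axis, so Schwarz reflection gives $\overline{x_\ell(s)}=x_\ell(\bar s)$. Summing against $\omega_f$ (which is real and positive) I get
\[
\sum_{f\in H_2(q)} \omega_f |LM(s,f)-1|^2 \;=\; \cS_{MM}(s) \;-\; 2\Re\,\cS_M(s) \;+\; \sum_{f\in H_2(q)}\omega_f,
\]
where $\cS_{MM}(s):=\sum_f \omega_f |LM(s,f)|^2$, $\cS_M(s):=\sum_f \omega_f LM(s,f)$, and the last sum equals $1+\cO(q^{-3/2})$ by \eqref{eq:harmonicsum}.

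In the range $\frac{1}{2\log q}\leq \delta\leq \frac{100\log\log q}{\log q}$ I would apply theorem \ref{thm:MollificationS} to $\cS_{MM}$ and lemma \ref{lemma:FirstMomentL_h} (proved below) to $\cS_M$. Both yield an asymptotic of the shape $1$ plus explicit secondary terms built from $M^{-2\delta}$, $M^{-2(1-\Upsilon)\delta}$, and the oscillatory factors $M^{\pm 2\delta(-1+it)}/((-2\delta+2it)\Upsilon\log M)$ and their complex conjugates, together with an error term. The three constant ``$1$'' contributions cancel via $1-2+1=0$, and the remaining secondary terms coming from $\cS_{MM}$ should match, up to the error controlled by the factor $\log\log q/\log q$, exactly twice the real part of the corresponding secondary terms of $\cS_M$. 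What is left is bounded by $(1+|t|)^B\, M^{-a\delta}$ for any $a<2(1-\Upsilon)$, the small loss from $2(1-\Upsilon)$ to $a$ absorbing the polylog factors in $\delta$ and $|t|$.

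In the complementary range $\delta>\frac{100\log\log q}{\log q}$, outside the hypotheses of theorem \ref{thm:MollificationS}, I would invoke convexity. Consider the entire function
\[
\Phi(z) \;:=\; \sum_{f\in H_2(q)} \omega_f \bigl(LM(z+it,f)-1\bigr)\bigl(LM(z-it,f)-1\bigr),
\]
which at $z=1/2+\delta$ returns $\sum_f \omega_f |LM(s,f)-1|^2$ by the reality identity above. On the line $\Re(z)=1/2+\frac{100\log\log q}{\log q}$ the first regime provides a bound of the form $(1+|t|)^B M^{-a'\delta'}$; on a far right line $\Re(z)=1+\ve$ both factors $LM(z\pm it,f)-1$ are controlled individually by the absolute convergence of $L$ and of the Dirichlet polynomial $M$, giving a very small bound. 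The Phragm\'en--Lindel\"of principle applied in the strip between these two lines then interpolates to the desired $(1+|t|)^B M^{-a\delta}$ at $z=1/2+\delta$.

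The main obstacle will be the bookkeeping for the cancellation of the secondary oscillatory terms in the first regime: the expressions $M^{-2\delta+2it}$ and $M^{(1-\Upsilon)(-2\delta+2it)}$ appearing in theorem \ref{thm:MollificationS} must be matched against the analogous terms from the first-moment expansion in lemma \ref{lemma:FirstMomentL_h}. Keeping precise track of the $it$-dependence (so that the $(1+|t|)^B$ factor is uniform) and of the fact that $(M^{-2\delta(1-\Upsilon)}-M^{-2\delta})/(2\delta\Upsilon\log M)^2$ can only be absorbed by $M^{-a\delta}$ when $a$ is strictly less than $2(1-\Upsilon)$ is the source of the restriction on $a$ in the statement.
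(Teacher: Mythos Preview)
The paper does not prove this lemma itself; it simply cites Kowalski--Michel \cite[Corollary~1]{kowalski2000explicit}. So your proposal is an attempt to manufacture a proof from the other results of the paper, and it has two real gaps.

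\emph{Circularity.} You invoke lemma~\ref{lemma:FirstMomentL_h} ``proved below'', but look at how that lemma is actually proved: the $T_2(s,f)$ contribution is bounded via Cauchy--Schwarz and the second-moment estimate on the line $\Re(w+s)=\tfrac12+\tfrac1{\log q}$. That second-moment estimate is exactly (the harmonic- or natural-weight version of) the lemma you are trying to prove, and the paper's proof of lemma~\ref{lemma:SecondMoment} again refers back to the same Kowalski--Michel corollary. So as written your argument assumes what it sets out to show. The loop can be broken --- a twisted first moment $\sum_f\omega_f\lambda_f(\ell)L(s,f)$ is computable directly from an approximate functional equation for $L$ and Petersson, with no appeal to second moments --- but that is not the route you describe.

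\emph{The range of $t$.} Theorem~\ref{thm:MollificationS} requires $|t|\le\frac{100\log\log q}{\log q}$, whereas the present lemma allows arbitrary $t\in\bbR$. Your two regimes are split by the size of $\delta$ only, so the case of small $\delta$ and $|t|\asymp 1$ is covered by neither. Your Phragm\'en--Lindel\"of step does not close this: to bound $\Phi(z)=\sum_f\omega_f(LM(z+it,f)-1)(LM(z-it,f)-1)$ on the left edge of the strip you need $|\Phi(\sigma+i\tau)|$ for \emph{all} $\tau\in\bbR$, and by Cauchy--Schwarz this is controlled by $\sum_f\omega_f|LM(\sigma+i(\tau\pm t),f)-1|^2$ at arbitrary height --- again the target lemma. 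What Kowalski--Michel in fact do is establish the mollified second moment on a line close to $\Re(s)=\tfrac12$ with a uniform $(1+|t|)^B$ dependence built in from the start, and only then interpolate against the edge $\Re(s)>1$.

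A minor side remark: the talk of ``matching secondary oscillatory terms'' is unnecessary and slightly misleading. Lemma~\ref{lemma:FirstMomentL_h} supplies only a bound $\cS_M=1+O(M^{-a\delta})$, not an asymptotic with identifiable secondary pieces; and in the range where theorem~\ref{thm:MollificationS} does apply, one checks directly that each explicit secondary term in $\cS_{MM}-1$ is itself $O(M^{-a\delta})$ for $a<2(1-\Upsilon)$. No cancellation between $\cS_{MM}$ and $\cS_M$ is required.
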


\begin{proof}
  See Kowalski--Michel \cite[Corollary 1]{kowalski2000explicit}.
\end{proof}

\begin{lemma}\label{lemma:FirstMomentL_h}
 With notations as above, assume that
  $\frac{1}{\log q}\leq \delta \leq 1/2+\frac{10 \log\log q}{(1-\Upsilon)\log q}$.
  Then for any $0<a\leq 2(1-\Upsilon)$, we have
  \[
    \sum_{f\in H_2(q)} \omega_f \big(LM(s,f)-1\big) \,
    \ll_{\Upsilon,\vartheta,B} \; (1+|t|)^B M^{-a\delta}.
  \]
\end{lemma}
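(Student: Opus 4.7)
The idea is to compute the harmonic first moment $\sum_{f \in H_2(q)} \omega_f LM(s, f)$ directly via an approximate functional equation for $L(s, f)$ and the Petersson trace formula (Lemma~\ref{lemma:PTF}). If I can show this equals $1 + O((1+|t|)^B M^{-a\delta})$, then subtracting $\sum_f \omega_f = 1 + O(q^{-3/2})$ from \eqref{eq:harmonicsum} gives the lemma.

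I would write the AFE for $L(s, f)$ at $s = 1/2 + \delta + it$ in the form
\[
L(s, f) = \sum_{n \geq 1} \frac{\lambda_f(n)}{n^s} W_s^+\!\left(\frac{n}{\sqrt q}\right) + \varepsilon_f X(s) \sum_{n \geq 1} \frac{\lambda_f(n)}{n^{1-s}} W_s^-\!\left(\frac{n}{\sqrt q}\right),
\]
built from the same polynomial $G$ of \eqref{eqn:G}, with smooth cutoffs $W_s^\pm$ of effective length $\sqrt q (1+|t|)^B$ and $|X(s)| \ll q^{-\delta}(1+|t|)^B$. Multiply by $M(s, f)$, average against $\omega_f$, and apply Lemma~\ref{lemma:PTF} to each Hecke pair $\lambda_f(n)\lambda_f(\ell)$. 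For the dual piece, use $\varepsilon_f \lambda_f(n) = q^{1/2}\lambda_f(qn)$ for $(n,q)=1$; then the Hecke/Petersson diagonal would force $qn\ell/d^2 = 1$ with $d\mid\ell$, which is impossible when $\ell \leq M < q$, so the dual piece contributes only through its off-diagonal.

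The diagonal from the first piece gives $\sum_\ell x_\ell(s)\,W_s^+(\ell/\sqrt q)/\ell^{s+1/2}$. Inserting the Mellin representation $W_s^+(y) = (2\pi i)^{-1}\int_{(c)} K_s(w)\, y^{-w}\, dw/w$ (with $K_s(0) = G(0) = 1$) and \eqref{eqn:x_l(s)}, then collecting $k = \ell n$ in the resulting double sum, yields the key identity
\[
\sum_\ell \frac{x_\ell(s)}{\ell^{s+1/2+w}} = \sum_k \frac{\mu^2(k) F_{\Upsilon,M}(k)}{k^{2s}} \prod_{p\mid k}\bigl(1 - p^{-w}\bigr),
\]
whose residue at $w = 0$ is exactly $\mu^2(1) F_{\Upsilon,M}(1) = 1$. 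Shifting the contour to $\Re(w) = -c$ for a fixed small $c>0$ and using the crude bound $M^{|w|+\varepsilon}$ leaves a remainder $\ll q^{-2c\vartheta}(1+|t|)^B$, since $M \leq q^{1/2-2\vartheta}$.

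For the Kloosterman off-diagonals of both AFE pieces, I would apply Weil's bound together with $|J_1(x)| \leq x/2$ (valid because the Bessel argument $4\pi\sqrt{n\ell}/c$ is at most $(1+|t|)^B q^{-1/2-\vartheta}$ in the effective ranges $n \ll \sqrt q (1+|t|)^B$, $\ell \leq M$, $c \geq q$), following the pattern of Kowalski--Michel \cite[\S2.4, Appendix A]{kowalski2000lower}. The first-piece off-diagonal is then $O(q^{-1/2+\varepsilon}(1+|t|)^B)$; the dual piece carries the additional factor $|X(s)| \ll q^{-\delta}(1+|t|)^B$, and after the CRT factoring $S(qn, \ell; qc') = (q-1)\,S(n, \ell\bar q; c')$ (valid for $(q, c')=1$ by primality of $q$) its contribution is $O(q^{-\delta}(1+|t|)^B)$. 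Combining,
\[
\sum_f \omega_f LM(s, f) - 1 \;\ll\; (1+|t|)^B\bigl(q^{-\delta} + q^{-2c\vartheta}\bigr).
\]
Since $a(1/2-2\vartheta) \leq (1-\Upsilon)(1-4\vartheta) < 1$ for $a \leq 2(1-\Upsilon)$, we have $q^{-\delta} \leq M^{-a\delta}$; choosing $c$ large (by taking the degree $N$ of $G$ large enough depending on $\vartheta, \Upsilon, a$) similarly makes $q^{-2c\vartheta} \leq M^{-a\delta}$ throughout the stated range of $\delta$. The hard part will be the dual off-diagonal: one must ensure that the CRT-decomposed Kloosterman sum and the smoothness of $W_s^-$ propagate the $q^{-\delta}$ gain cleanly through the joint $n,\ell,c'$-summation without incurring spurious $M^{O(1)}$ or $q^{1/2}$ losses from the reshaping $\lambda_f(n)\varepsilon_f = q^{1/2}\lambda_f(qn)$.
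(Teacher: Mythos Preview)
Your approach is genuinely different from the paper's. The paper never writes down an approximate functional equation for $L(s,f)$; instead it first proves the bound only at the single abscissa $\delta_0 = \tfrac12 + \tfrac{10\log\log q}{(1-\Upsilon)\log q}$, where $\Re(s)>1$ and the Dirichlet series $LM(s,f)=\sum_n \lambda_f(n)c(n)n^{-s}$ with $c(n)=\sum_{d\mid n}\mu(d)F_{\Upsilon,M}(d)$ converges absolutely. The key algebraic observation there is that $c(1)=1$ and $c(n)=0$ for all $1<n\le M^{1-\Upsilon}$, so $B(s,f):=LM(s,f)-1$ is supported on $n>M^{1-\Upsilon}$. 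One then writes $B(s,f)=T_1-T_2$ via the kernel $\Gamma(w)X^w$ with $X=q^{1-\vartheta}$, estimates the smoothly truncated piece $T_1=\sum_{n>M^{1-\Upsilon}}\lambda_f(n)c(n)e^{-n/X}n^{-s}$ by Petersson (no diagonal since $n>1$), and bounds the shifted integral $T_2$ by Cauchy--Schwarz together with the second-moment Lemma~\ref{lemma:SecondMomentL_h}. The full range $\tfrac{1}{\log q}\le\delta\le\delta_0$ then follows by Phragm\'en--Lindel\"of interpolation between this bound and Lemma~\ref{lemma:SecondMomentL_h}. This route never confronts a dual sum or a root-number at all.

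Your direct AFE route can be made to work, but as written it has a gap and a slip. The slip: your CRT identity is wrong --- for $(q,c')=1$ one has $S(qn,\ell;qc')=S(0,\ell\bar{c'};q)\,S(n,\ell\bar q;c')=-S(n,\ell\bar q;c')$ (the mod-$q$ factor is the Ramanujan sum $c_q(\ell\bar{c'})=-1$ since $q\nmid\ell$), not $(q-1)\,S(n,\ell\bar q;c')$. Fortunately the magnitude saving of $q^{1/2}$ over naive Weil is what you need, and with it the dual off-diagonal is indeed $\ll (1+|t|)^B q^{-\delta-2\vartheta+\varepsilon}\le (1+|t|)^B M^{-a\delta}$. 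The gap is that you flag the dual off-diagonal as ``the hard part'' but do not actually carry out the estimate; without the CRT reduction the crude Weil bound combined with $J_1(x)\ll x$ loses a factor $q^{1/2}$ and the dual piece is \emph{not} $O(q^{-\delta})$ for $\delta\asymp 1/\log q$. So your sketch is salvageable but incomplete, whereas the paper's convexity argument sidesteps this entirely at the cost of invoking Lemma~\ref{lemma:SecondMomentL_h}.
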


\begin{remark}
  This is an improvement of lemma 7 in Kowalski--Michel \cite{kowalski2000explicit},
  where they proved a similar bound for any $0<a<1-\Upsilon$.
  We will prove the lemma by using the method of Conrey and Soundararajan \cite{conrey3000real}
  (see also \cite[Lemma 6.6]{goldfeld2016super}).
  The main idea here is to use the approximate functional equation and the Petersson trace formula
  instead of obtaining the bound by the second moment and the Cauchy--Schwarz inequality.
\end{remark}

\begin{proof}[Proof of lemma \ref{lemma:FirstMomentL_h}]
  In the region $\Re(s)>1$, by \eqref{eqn:M(s)} we may write
  \[
    LM(s,f) = \sum_{n=1}^{\infty} \frac{1}{n^{s}} \left( \sum_{abc^2=n} \lambda_f(a)\lambda_f(b)\mu(b)\mu(bc)^2 F_{\Upsilon,M}(bc) \right).
  \]
  Using the Hecke relations and the fact that we may assume $b<q$, we see that
  \[
    \begin{split}
      & \sum_{abc^2=n} \lambda_f(a)\lambda_f(b)\mu(b)\mu(bc)^2 F_{\Upsilon,M}(bc)
      = \sum_{abc^2=n} \sum_{d|(a,b)}\lambda_f\left(\frac{ab}{d^2}\right)\mu(b)\mu(bc)^2 F_{\Upsilon,M}(bc),
    \end{split}
  \]
  and setting $a=\alpha d$, $b=\beta d$, and $g=cd$, this becomes
  \[
    \begin{split}
      & \sum_{g^2|n} \lambda_f\left(\frac{n}{g^2}\right)
      \sum_{\alpha\beta=n/g^2}\mu(\beta g)^2 F_{\Upsilon,M}(\beta g) \sum_{cd=g}\mu(\beta d)
      = \lambda_f(n) \sum_{\alpha\beta=n} \mu(\beta) F_{\Upsilon,M}(\beta),
    \end{split}
  \]
  since the terms with $g>1$ are easily seen to disappear. Thus
  \[
    LM(s,f) = \sum_{n=1}^{\infty} \frac{\lambda_f(n)}{n^{s}} c(n),
    \quad \textrm{where}\quad
    c(n) = \sum_{d|n} \mu(d) F_{\Upsilon,M}(d).
  \]
  We have $c(1)=1$; for $1<n\leq M^{1-\Upsilon}$, we have $c(n)= \sum_{d|n}\mu(d)=0$; and for $n>M^{1-\Upsilon}$,  we have $|c(n)|\leq \tau(n)$.

  We will first handle the case $\Re(s)= 1/2+\delta_0$ where
  $\delta_0 = 1/2+\frac{10 \log\log q}{(1-\Upsilon)\log q}$.
  Put $B(s,f):=LM(s,f)-1$. We consider
  \[
    \frac{1}{2\pi i} \int\limits_{3-i\infty}^{3+i\infty}\Gamma(w) B(w+s,f)X^w  dw,
  \]
  where $X=q^{1-\vartheta}$.
  We shift the line of integration to $\Re(w)=-\delta_0+\delta_1$,
  where $\delta_1=\frac{1}{\log q}$.
  The pole at $w=0$ gives $B(s,f)$, and so we conclude that
  \[
    \begin{split}
      B(s,f) & = \frac{1}{2\pi i} \int\limits_{3-i\infty}^{3+i\infty} \Gamma(w)B(w+s,f)X^w  dw
       - \frac{1}{2\pi i} \int\limits_{-\delta_0+\delta_1-i\infty}^{-\delta_0+\delta_1+i\infty} \Gamma(w)B(w+s,f)X^w dw \\
      & =: T_1(s,f) - T_2(s,f).
    \end{split}
  \]

  We first estimate the contribution of the $T_2(s,f)$ terms.
  By Cauchy's inequality and lemma \ref{lemma:SecondMoment},
  for some constant $B>0$ we have
  \begin{align*}
      &  \mathcal{A}\Big( \big\{ \omega_f |T_2(s,f)| \big\};q \Big)
     \ll  X^{-\delta_0+\delta_1}
     \mathcal{A}\Bigg( \bigg\{ \omega_f \int_{(-\delta_0+\delta_1)}
     |\Gamma(w)| \cdot |B(w+s,f)|\; |dw| \bigg\}; \;q \Bigg)  \\
     & \hskip 75pt \ll   (1+|t|)^B X^{-\delta_0+\delta_1}
     \ll (1+|t|)^B q^{-(1-2\vartheta)\delta_0}
     \ll (1+|t|)^B M^{-2(1-\Upsilon)\delta_0}.
  \end{align*}
  It remains now to estimate the $T_1$ contribution.
  Since $\frac{1}{2\pi i}\int_{(\alpha)}\Gamma(w)(X/n)^w dw = e^{-n/X}$,
  we see that
  \[
    \begin{split}
      T_1(s,f) & = \sum_{n=2}^{\infty} \frac{\lambda_f(n)c(n)}{n^s} e^{-n/X} = \sum_{M^{1-\Upsilon}<n\leq X(\log q)^2} \frac{\lambda_f(n)c(n)}{n^s} e^{-n/X} + \mathcal{O}\Big(q^{-B}\Big).
    \end{split}
  \]
  In order to bound $ \mathcal{A}\big( \big\{ \omega_f T_1(s,f) \big\};q \big)$,
  for $M^{1-\Upsilon}<n\leq X(\log q)^2$, we consider
  \[
    \mathcal{A}\big( \big\{ \omega_f \lambda_f(n) \big\};q \big)
    =  \sum_{f\in H_2(q)}  \omega_f \cdot \lambda_f(n).
  \]
  By lemma \ref{lemma:PTF}, we arrive at
  \[
    \begin{split}
         & - 2\pi
         \sum_{c\,\equiv\,0\,(\mod \hskip -3ptq)} \frac{S(1,n;c)}{c} J_{1}\left(\frac{4\pi\sqrt{n}}{c}\right)
         \ll \frac{\sqrt{n}}{q^{3/2-\varepsilon}} \sum_{r\geq1} r^{-3/2}
         \ll \frac{\sqrt{n}}{q^{3/2-\varepsilon}}.
    \end{split}
  \]
  Thus we see that if $n\leq X(\log q)^2$ then
  \[
    \mathcal{A}\big( \big\{ \omega_f \lambda_f(n) \big\}; \, q \big)
      \ll   \frac{\sqrt{n}}{q^{3/2-\varepsilon}}
      \ll   q^{-1-\varepsilon}.
  \]
  Hence
  \[
    \mathcal{A}\big( \big \{ \omega_f T_1(s,f) \big\}; \, q \big)
     \ll q^{-1},
  \]
  and
  \[
    \mathcal{A}\big( \big\{ \omega_f (LM(1/2+\delta_0+it,f)-1) \big\};q \big)
    \ll_{B,a}  (1+|t|)^B M^{-2(1-\Upsilon)\delta_0}.
  \]
  This completes the proof for $\Re(s)=\frac12+\delta_0 = 1+\frac{10 \log\log q}{(1-\Upsilon)\log q}$.
  Furthermore, the result for $\frac{1}{\log q}\leq \delta \leq \frac12+\frac{10 \log\log q}{(1-\Upsilon)\log q}$
  now follows from lemma \ref{lemma:SecondMomentL_h} and the convexity argument.
\end{proof}


\section{Removing the harmonic weights}\label{sec:removing_weights}

\subsection{Near the critical point}

To get the natural average from the harmonic average,
we will use the method of \cite{kowalski1999analytic}.
For $x\geq1$, let
\begin{equation}\label{eqn:w(x)}
  \omega_f(x) := \sum_{dl^2\leq x}\frac{\lambda_f(d^2)}{dl^2}.
\end{equation}
We will use the following lemma to remove the harmonic weights.
\begin{lemma}\label{lemma:Kowalski-Michel}
  Let $\{\alpha_f\}$ be complex numbers satisfying the conditions
  \begin{equation}\label{eqn:condition1}
    \mathcal{A}\big(\{\omega_f \alpha_f\};q\big) \ll (\log q)^C, \quad
    \mbox{(for some absolute constant $C>0$)},
  \end{equation}
  and
  \begin{equation}\label{eqn:condition2}
    \underset{f\in H_2(q)}{\max} |\omega_f \alpha_f| \ll q^{-\varrho}, \quad
    \mbox{(for some $\varrho>0$)},
  \end{equation}
  as the level $q$ (prime) tends to infinity. Let $x=q^\kappa$ for some $0<\kappa<1$.
  There exists an absolute constant $\varsigma=\varsigma(\kappa,\varrho)>0$ such that
  \[
    \frac{\mathcal{A}\big(\{\alpha_f\};q\big)}{\mathcal{A}(q)}
    = \frac{1}{\zeta(2)}\mathcal{A}\big(\{\omega_f \cdot\omega_f(x) \alpha_f\};q\big)
    + \mathcal{O}(q^{-\varsigma}).
  \]
\end{lemma}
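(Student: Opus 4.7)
The plan is to use the harmonic weight identity from lemma~\ref{lemma:PTF}, namely
\[
\omega_f\,L(1,\sym^2 f) \;=\; \frac{\zeta(2)}{\mathcal{A}(q)}\bigl(1 + \mathcal{O}((\log q)^4/q)\bigr),
\]
as a bridge between the natural and harmonic averages, and then to approximate $L(1,\sym^2 f)$ by its partial sum $\omega_f(x)$. The latter is the natural choice because, combining the identity $L(s,\sym^2 f) = \zeta(2s)\sum_{n\ge 1}\lambda_f(n^2)/n^s$ with $\zeta(2)=\sum_{\ell\ge 1}\ell^{-2}$, one obtains $L(1,\sym^2 f) = \sum_{d,\ell\ge 1}\lambda_f(d^2)/(d\ell^2)$, of which $\omega_f(x)$ is precisely the truncation at $d\ell^2\le x$.

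\textbf{Step 1 (harmonic conversion).} From the harmonic identity one has
\[
\frac{\alpha_f}{\mathcal{A}(q)} \;=\; \frac{\omega_f L(1,\sym^2 f)\alpha_f}{\zeta(2)} + \mathcal{O}\!\left(\frac{(\log q)^4\,L(1,\sym^2 f)\,|\alpha_f|}{q\,\mathcal{A}(q)}\right).
\]
Summing over $f\in H_2(q)$ and using $|\omega_f\alpha_f|\ll q^{-\varrho}$ from \eqref{eqn:condition2} together with the standard bound $L(1,\sym^2 f)\ll (\log q)^B$, the error term is absorbed into $\mathcal{O}(q^{-\varrho}(\log q)^{B+4})$, giving
\[
\frac{\mathcal{A}(\{\alpha_f\};q)}{\mathcal{A}(q)} \;=\; \frac{1}{\zeta(2)}\sum_{f\in H_2(q)} \omega_f\,L(1,\sym^2 f)\,\alpha_f + \mathcal{O}\!\left(q^{-\varrho}(\log q)^{B+4}\right).
\]
It therefore remains to prove
\[
\Sigma(x) \;:=\; \sum_{f\in H_2(q)} \omega_f\alpha_f\bigl(L(1,\sym^2 f) - \omega_f(x)\bigr) \;=\; \mathcal{O}(q^{-\varsigma}).
\]
Inserting a smooth even cutoff $G$ with $G(0)=1$ and applying Mellin inversion, one shifts the contour past the simple pole at $s=0$ to write
\[
L(1,\sym^2 f) - \omega_f(x) \;=\; -\,\frac{1}{2\pi i}\int_{(-\alpha)} L(1+s,\sym^2 f)\,\frac{x^{s}G(s)}{s}\,ds,
\]
for any small $\alpha>0$, so that $\Sigma(x)$ is an integral of the averaged quantity $\mathcal{L}(s):=\sum_{f}\omega_f\alpha_f\,L(1+s,\sym^2 f)$ along $\Re(s)=-\alpha$.

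\textbf{Main obstacle.} The key technical step is to bound $\mathcal{L}(s)$ uniformly for $\Re(s)=-\alpha$. Because the weights $\omega_f\alpha_f$ are arbitrary subject only to \eqref{eqn:condition1}--\eqref{eqn:condition2}, the Petersson trace formula cannot be applied directly to the Dirichlet coefficients $b_n(f)=\sum_{m^2n'=n}\lambda_f(n'^{\,2})$ of $L(1+s,\sym^2 f)$. The idea is to expand $\mathcal{L}(s)$ as a Dirichlet series in $n=d\ell^2$ (truncated via the approximate functional equation of $L(1+s,\sym^2 f)$, whose conductor is $q^2$), separate the leading $n=1$ contribution (controlled by condition \eqref{eqn:condition1} alone, of size $\mathcal{O}((\log q)^C)$), and bound the off-diagonal terms by Cauchy--Schwarz combined with the Petersson bound $\sum_f \omega_f\lambda_f(n^2)^2 \ll \tau(n)^{O(1)}$ and the pointwise bound $|\omega_f\alpha_f|\ll q^{-\varrho}$. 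Trading the resulting estimate against the gain $x^{-\alpha}=q^{-\kappa\alpha}$ from the contour and choosing $\alpha>0$ sufficiently small in terms of $\kappa$ and $\varrho$ yields $|\Sigma(x)|\ll q^{-\varsigma}$ for some positive $\varsigma=\varsigma(\kappa,\varrho)$. Combining with Step 1 completes the proof.
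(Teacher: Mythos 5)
The paper gives no proof of this lemma: it is quoted verbatim from Kowalski--Michel \cite[Proposition 2]{kowalski1999analytic}. Your Step 1 (the harmonic conversion via $\omega_f L(1,\sym^2 f)=\zeta(2)|H_2(q)|^{-1}(1+\mathcal{O}((\log q)^4/q))$, with the relative error killed by \eqref{eqn:condition2}) and your reduction to bounding $\Sigma(x)=\sum_f \omega_f\alpha_f\bigl(L(1,\sym^2 f)-\omega_f(x)\bigr)$ are correct and agree with the standard argument. The gap is in your ``Main obstacle'' paragraph: the proposed bound for the off-diagonal terms cannot close. For $n\geq 2$ the hypotheses \eqref{eqn:condition1}--\eqref{eqn:condition2} give \emph{no cancellation whatsoever} in $\sum_f \omega_f\alpha_f\, b_n(f)$ --- the weights $\omega_f\alpha_f$ may be chosen to correlate perfectly with $b_n(f)$ for a fixed $n$ --- so any combination of Cauchy--Schwarz with the Petersson/large-sieve bound on $\sum_f\omega_f|b_n(f)|^2$ can do no better than $|\sum_f\omega_f\alpha_f b_n(f)|\ll q^{1-\varrho+\varepsilon}$ (there are $\asymp q$ forms, each contributing up to $q^{-\varrho}$). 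Since the Dirichlet series of $L(1+s,\sym^2 f)$ must be truncated at length $N\asymp q^{1+\varepsilon}$ (the conductor is $q^2$) before the contour can sit at $\Re(s)=-\alpha$, the off-diagonal total is $\gg q^{1-\varrho}(N/x)^{\alpha}$, and the gain $x^{-\alpha}=q^{-\kappa\alpha}$ from the contour shift is powerless against the factor $q^{1-\varrho}$ when $\varrho<1$ --- which is exactly the regime of the application in \S\ref{sec:removing_weights}, where $\varrho\approx 2\vartheta$ is tiny. So the strategy of averaging the Dirichlet coefficients over $f$ is structurally the wrong move here.

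The actual proof of Kowalski--Michel's Proposition 2 avoids averaging the coefficients of $L(s,\sym^2 f)$ against the $\alpha_f$ altogether. One shows that $L(1,\sym^2 f)-\omega_f(x)\ll q^{-\delta(\kappa)}$ \emph{individually} for every $f$ outside a small exceptional set: by Mellin inversion this difference is a contour integral of $L(1+s,\sym^2 f)x^s/s$ to the left of $0$, and it is small provided $L(s,\sym^2 f)$ has no zeros in a box $[1-\eta,1]\times[-T,T]$ (there is no Siegel zero for $\sym^2 f$ by Hoffstein--Lockhart/Goldfeld--Hoffstein--Lieman, and $\log L$ is then controlled in the zero-free box). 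A zero-density estimate for the family $\{L(s,\sym^2 f)\}_{f\in H_2(q)}$ near $\Re(s)=1$ bounds the number of exceptional $f$ by $q^{c\eta}$, which is $\leq q^{\varrho/2}$ after choosing $\eta$ small in terms of $\varrho$; the exceptional contribution is then $\ll q^{-\varrho}\cdot q^{\varepsilon}\cdot q^{\varrho/2}\ll q^{-\varrho/2+\varepsilon}$ by \eqref{eqn:condition2}, while the generic forms contribute $\ll q^{-\delta}\sum_f|\omega_f\alpha_f|\ll q^{-\delta}(\log q)^C$ (note that this last step really uses the summability of $|\omega_f\alpha_f|$, which holds in the paper's application since $\alpha_f\geq 0$ there). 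If you want to repair your write-up, you need to import this zero-density input; no purely trace-formula argument against arbitrary weights satisfying \eqref{eqn:condition1}--\eqref{eqn:condition2} will suffice.
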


\begin{proof}
  See \cite[Proposition 2]{kowalski1999analytic}.
\end{proof}

Using the above lemma and  similar calculations as in \S \ref{sec:msm},
we can obtain the following theorem.

\begin{theorem} \label{thm:MollificationS2}
  Let $q,M,\Upsilon$ be the same as in theorem \ref{thm:MollificationS}.
  Let $x=q^\kappa$ where $0<\kappa<1$ is a small constant.
  Assume that $|t|\leq \frac{100\log\log q}{\log q}$ and $-\frac{100}{\log q}\leq \delta \leq \frac{100\log\log q}{\log q}$.
  Then we have
  \begin{equation}\label{eqn:SecondMoment-w(x)}
    \begin{split}
      & \frac{1}{\zeta(2)} \sum_{f\in H_2(q)} \omega_f \cdot \omega_f(x) \cdot |LM(1/2+\delta+it,f)|^2  \\
      & \hskip 20pt
       = 1  + q^{-2\delta}
            \bigg| \frac{M^{-2\delta+2it}-M^{(1-\Upsilon)(-2\delta+2it)}}
            {\Upsilon(-2\delta+2it)\log M} \bigg|^2   \\
      & \hskip 80pt
      + \frac{M^{-2\delta(1-\Upsilon)}-M^{-2\delta}}{(2\delta\Upsilon\log M)^2}\Big(1-q^{-2\delta}\Big)
      + \mathcal{O}\left(  \frac{\log\log q}{\log q}  M^{-2(1-\Upsilon)\delta} \right).
    \end{split}
  \end{equation}
  Furthermore, we have
  \begin{equation}\label{eqn:SecondMoment-n}
    \begin{split}
      & \frac{\mathcal{A}\big(\{|LM(1/2+\delta+it,f)|^2\};q\big)}{\mathcal{A}(q)}
       = 1  + q^{-2\delta}
            \bigg| \frac{M^{-2\delta+2it}-M^{(1-\Upsilon)(-2\delta+2it)}}
            {\Upsilon(-2\delta+2it)\log M} \bigg|^2 \\
      & \hskip 80pt + \frac{M^{-2\delta(1-\Upsilon)}-M^{-2\delta}}{(2\delta\Upsilon\log M)^2}\Big(1-q^{-2\delta}\Big)
      + \mathcal{O}\left(  \frac{\log\log q}{\log q}  M^{-2(1-\Upsilon)\delta} \right).
    \end{split}
  \end{equation}
\end{theorem}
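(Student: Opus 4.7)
The plan is to prove the two assertions of theorem \ref{thm:MollificationS2} in sequence. Equation \eqref{eqn:SecondMoment-w(x)} is the main analytic work, after which \eqref{eqn:SecondMoment-n} follows by a clean application of lemma \ref{lemma:Kowalski-Michel} combined with the harmonic-weighted result of theorem \ref{thm:MollificationS}.

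For \eqref{eqn:SecondMoment-w(x)} I would first open the definition \eqref{eqn:w(x)} of $\omega_f(x)$ and interchange sums to write
\[
  \sum_{f\in H_2(q)} \omega_f\,\omega_f(x)\,|LM(s,f)|^2
  \;=\; \sum_{d\ell^2\leq x} \frac{1}{d\ell^2}\, \sum_{f\in H_2(q)} \omega_f\, \lambda_f(d^2)\,|LM(s,f)|^2.
\]
Expanding $|LM(s,f)|^2$ as in \eqref{eqn:|M|^2} produces terms of the shape $\omega_f\lambda_f(d^2)\lambda_f(\ell_1\ell_2)$, and iterating the Hecke relation \eqref{eqn:HR} reduces them to a linear combination of $\omega_f\lambda_f(m)|L(s,f)|^2$, each of which can be evaluated by theorem \ref{thm:TwistedSecondMoment}. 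From this point the analysis follows the pattern of \S\ref{sec:msm}: I would split the resulting expression into $\mathcal{S}_1$- and $\mathcal{S}_2$-type sums, rewrite them as double Mellin--Barnes integrals involving $\Psi_{\Upsilon,M}$, factor the multiplicative structure as an Euler product times standard zeta quotients, and shift contours to collect the residues at $w_1=0$, $w_1=-2\delta-w_2$, and (for the $\mathcal{S}_2$ piece) $w_1=-2\delta-2it$. The new feature is that the extra Dirichlet series built out of $\sum_{d\ell^2\leq x}\lambda_f(d^2)/(d\ell^2)$ completes (up to an admissible error from the truncation $d\ell^2\leq x = q^\kappa$) into $\prod_p (1-p^{-2})^{-1}=\zeta(2)$; this factor precisely cancels the $\zeta(2)^{-1}$ in front, so after cancellation the main terms and error terms coincide exactly with those produced by theorem \ref{thm:MollificationS}.

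For \eqref{eqn:SecondMoment-n} I would apply lemma \ref{lemma:Kowalski-Michel} with $\alpha_f := |LM(1/2+\delta+it,f)|^2$. Condition \eqref{eqn:condition1} is immediate from theorem \ref{thm:MollificationS}, which gives a bound of size $O((\log q)^C)$ for the harmonic average throughout the relevant range of $\delta,t$. To verify the pointwise bound \eqref{eqn:condition2}, I would combine $\omega_f \ll q^{-1+\varepsilon}$ (via the Hoffstein--Lockhart lower bound on $L(1,\sym^2 f)$) with the convexity estimate $L(s,f) \ll q^{1/4+\varepsilon}(1+|t|)^{B}$ and the trivial bound $|M(s,f)| \ll M^{1/2+\varepsilon}$; provided $\vartheta$ is taken sufficiently large relative to $\kappa$, this yields $|\omega_f\alpha_f|\ll q^{-\varrho}$ for some $\varrho>0$. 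Lemma \ref{lemma:Kowalski-Michel} then gives $\mathcal{A}(\{\alpha_f\};q)/\mathcal{A}(q) = \zeta(2)^{-1}\mathcal{A}(\{\omega_f\,\omega_f(x)\,\alpha_f\};q)+\mathcal{O}(q^{-\varsigma})$, and substituting \eqref{eqn:SecondMoment-w(x)} completes the proof.

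The main obstacle is the bookkeeping in the first step: unfolding the extra Hecke convolution against $\lambda_f(d^2)$ inside the $\mathcal{S}_1$ and $\mathcal{S}_2$ Euler products and checking that every $p$-local factor combines to produce precisely the additional $\zeta(2)$, with no spurious residues surfacing when the contours are shifted. A secondary concern is the pointwise bound \eqref{eqn:condition2}, since convexity plus the trivial bound on $M$ leaves very little room when $\vartheta$ is small; one would have to choose $\kappa$ small enough (and possibly invoke a subconvex estimate for $L(s,f)$) so that the saving $q^{-\varrho}$ is truly positive in the parameter range of the theorem.
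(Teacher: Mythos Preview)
Your outline is correct and follows essentially the same route as the paper: open $\omega_f(x)$, combine with the mollifier via Hecke relations, feed the resulting $\lambda_f(m)$ into theorem~\ref{thm:TwistedSecondMoment}, complete the truncated $d$-sum, and then rerun the Mellin--Barnes/contour-shift analysis of \S\ref{sec:msm}; part \eqref{eqn:SecondMoment-n} then follows from lemma~\ref{lemma:Kowalski-Michel} exactly as you say.

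Two small corrections. First, your heuristic that the extra arithmetic sum ``completes into $\zeta(2)$'' is too optimistic as stated. After applying theorem~\ref{thm:TwistedSecondMoment} the $d$-sum becomes $\sum_d \eta_{it}(d^2\ell_1\ell_2)\,d^{-2\mp 2\delta}$, and evaluating this requires a separate Dirichlet-series identity (the paper's lemma~\ref{lemma:DirichletSeries}). The resulting local factors are genuinely more complicated than those in $G_1,G_2$ of \S\ref{sec:msm}, and the paper uses a further multiplicativity lemma (lemma~\ref{lemma:MultiplicativeFunction2}) to organize them. The cancellation is not a bare $\zeta(2)$ but rather that the modified correction factor $\mathcal{H}_1(0,0,\delta,t)$ equals $\zeta(4+4\delta)\big/\big(\zeta(2+2\delta)\,|\zeta(2+2\delta+2it)|^2\big)$, which kills the extra zeta factors produced by the $d$-sum. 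This is precisely the ``bookkeeping'' you flag, and it is the place where the details matter.

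Second, your worry about condition~\eqref{eqn:condition2} is unfounded: convexity is enough. With $\omega_f\ll q^{-1}\log q$, $L(1/2+\delta+it,f)\ll q^{1/4+\varepsilon}$, and $|M(s,f)|\ll M^{1/2+\varepsilon}\le q^{1/4-\vartheta+\varepsilon}$, one gets $\omega_f\,|LM|^2\ll q^{-2\vartheta+\varepsilon}$ uniformly, so $\varrho$ can be taken close to $2\vartheta$ regardless of $\kappa$; no subconvexity is needed.
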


\begin{proof}
  We will follow \cite[\S6]{hough2012zero} closely.
  By \eqref{eqn:|M|^2}, \eqref{eqn:w(x)}, and the Hecke relations, we have
  \[
    \begin{split}
      & \omega_f(x) |M(1/2+\delta+it,f)|^2 = \sum_{dl^2\leq x}\frac{1}{dl^2} \sum_{\ell_1=1}^\infty\sum_{\ell_2=1}^\infty
      \sum_{g=1}^\infty \frac{1}{g}  \\
      & \hskip 80pt \cdot \frac{x_{g\ell_1}(1/2+\delta+it)x_{g\ell_2}(1/2+\delta-it)}
            {\ell_1^{1/2}\ell_2^{1/2}}
            \sum_{h|(d^2,\ell_1\ell_2)}\lambda_f\left(\frac{d^2\ell_1\ell_2}{h^2}\right).
    \end{split}
  \]
  Write $h=h_1h_2^2$ where $h_1$ and $h_2$ are square-free.
  Clearly $h_2|(\ell_1,\ell_2)$ and $h_2|d$.
  Also $h_1|(d,\ell_1\ell_2)$. Shifting the orders of summations, and then
  making changes of variables ($d\rightarrow dh_1h_2,\ \ell_1\rightarrow \ell_1h_2$,
  and $\ell_2\rightarrow \ell_2h_2$), we obtain
  \[
    \begin{split}
       & \frac{1}{\zeta(2)} \sum_{f\in H_2(q)} \omega_f \cdot \omega_f(x) |LM(1/2+\delta+it,f)|^2
       = \frac{1}{\zeta(2)}  \sum_{dl^2\leq x}\frac{1}{dl^2} \sum_{\ell_1=1}^\infty\sum_{\ell_2=1}^\infty
      \sum_{g=1}^\infty \frac{1}{g}
       \sum_{h|(d^2,\ell_1\ell_2)}  \\
      & \hskip 25pt \cdot \frac{x_{g\ell_1}(1/2+\delta+it)x_{g\ell_2}(1/2+\delta-it)}
            {\ell_1^{1/2}\ell_2^{1/2}}
           \sum_{f\in H_2(q)} \omega_f \cdot \lambda_f\left(\frac{d^2\ell_1\ell_2}{h^2}\right) \big |L(1/2+\delta+it)\big |^2 \\
      & \hskip 15pt = \frac{1}{\zeta(2)}  \sum_{l}\frac{1}{l^2} \sum_{\ell_1=1}^\infty\sum_{\ell_2=1}^\infty
      \sum_{g=1}^\infty \frac{1}{g} \underset{h_2}{{\sum}^{\flat}} \frac{1}{h_2^2}
      \frac{x_{g\ell_1h_2}(1/2+\delta+it)x_{g\ell_2h_2}(1/2+\delta-it)}{\ell_1^{1/2}\ell_2^{1/2}} \\
      & \hskip 120pt \cdot
            \underset{h_1|\ell_1\ell_2}{{\sum}^{\flat}} \frac{1}{h_1} \sum_{d<x/l^2h_1h_2} \frac{1}{d} \sum_{f\in H_2(q)} \omega_f \cdot \lambda_f\left(d^2\ell_1\ell_2\right) \big |L(1/2+\delta+it)\big |^2.
    \end{split}
  \]
  Now introducing our expression for the twisted second moment, i.e., theorem \ref{thm:TwistedSecondMoment}, we obtain
  \[
    \begin{split}
       & \frac{1}{\zeta(2)} \sum_{f\in H_2(q)} \omega_f \cdot \omega_f(x) |LM(1/2+\delta+it,f)|^2 \\
      & \hskip 30pt
       = \frac{1}{\zeta(2)}  \sum_{l}\frac{1}{l^2} \sum_{\ell_1=1}^\infty\sum_{\ell_2=1}^\infty
      \sum_{g=1}^\infty \frac{1}{g} \underset{h_2}{{\sum}^{\flat}} \frac{1}{h_2^2} \frac{x_{g\ell_1h_2}(1/2+\delta+it)x_{g\ell_2h_2}(1/2+\delta-it)}{\ell_1^{1/2}\ell_2^{1/2}}
       \\
      & \hskip 60pt \cdot \underset{h_1|\ell_1\ell_2}{{\sum}^{\flat}} \frac{1}{h_1} \sum_{d<x/l^2h_1h_2} \frac{1}{d}
           \Bigg\{ \zeta_q(1+2\delta) \frac{\eta_{it}(d^2\ell_1\ell_2)}{(d^2\ell_1\ell_2)^{1/2+\delta}}
             \\
      & \hskip 100pt
       + \zeta_q(1-2\delta) \frac{\eta_{it}(d^2\ell_1\ell_2)}{(d^2\ell_1\ell_2)^{1/2-\delta}}
                \frac{H_t(-\delta)}{H_t(\delta)} \left(\frac{q}{4\pi^2}\right)^{-2\delta}
             +  \mathcal{O}_{\varepsilon}\Big(q^{-1/2+\ve}\Big) \Bigg\}.
    \end{split}
  \]
  The error term above contributes $\mathcal{O}_{\varepsilon}\left(Mq^{-1/2+\varepsilon}\right)
  =\mathcal{O}_{\varepsilon}\left(q^{-2\vartheta+\varepsilon}\right)$.
  Note that we may remove the restriction on the sum over $d$ with error
  $\ll x^{-1/2+\varepsilon}$. Thus
  \begin{equation}\label{eqn:T12}
    \begin{split}
       & \frac{1}{\zeta(2)} \sum_{f\in H_2(q)} \omega_f \cdot \omega_f(x)\big |LM(1/2+\delta+it,f)\big |^2 \\
       & \hskip 150pt
       = \mathcal{T}_1 + \mathcal{T}_2
             +  \mathcal{O}_{\varepsilon}\Big(q^{-\kappa/2+\varepsilon}+q^{-2\vartheta+\varepsilon}\Big),
    \end{split}
  \end{equation}
  where
  \[
    \begin{split}
      \mathcal{T}_1 & :=  \zeta_q(1+2\delta)
           \sum_{\ell_1=1}^\infty\sum_{\ell_2=1}^\infty
           \sum_{g=1}^\infty \frac{1}{g} \underset{h_2}{{\sum}^{\flat}} \frac{1}{h_2^2}
            \\
        & \hskip 70pt \cdot \frac{x_{g\ell_1h_2}(1/2+\delta+it)x_{g\ell_2h_2}(1/2+\delta-it)}{(\ell_1\ell_2)^{1+\delta}}
          \underset{h_1|\ell_1\ell_2}{{\sum}^{\flat}} \frac{1}{h_1} \sum_{d} \frac{\eta_{it}(d^2\ell_1\ell_2)}{d^{2+2\delta}}, \\
      \mathcal{T}_2 & := \zeta_q(1-2\delta) \frac{H_t(-\delta)}{H_t(\delta)} \left(\frac{q}{4\pi^2}\right)^{-2\delta}
           \sum_{\ell_1=1}^\infty\sum_{\ell_2=1}^\infty
           \sum_{g=1}^\infty \frac{1}{g} \underset{h_2}{{\sum}^{\flat}} \frac{1}{h_2^2}
           \\
      & \hskip 70pt \cdot \frac{x_{g\ell_1h_2}(1/2+\delta+it)x_{g\ell_2h_2}(1/2+\delta-it)}{(\ell_1\ell_2)^{1-\delta}}
           \underset{h_1|\ell_1\ell_2}{{\sum}^{\flat}} \frac{1}{h_1} \sum_{d}
              \frac{\eta_{it}(d^2\ell_1\ell_2)}{d^{2-2\delta}}.
    \end{split}
  \]
  To estimate the main term contributions, we will need the following lemma.

  \begin{lemma}\label{lemma:DirichletSeries}
    Let $\ell_1$ and $\ell_2$ be square-free. For $\Re(s\pm 2\nu)>1$ we have
    \[
      \begin{split}
        & \sum_{d} \frac{\eta_{\nu}(d^2\ell_1\ell_2)}{d^s} = \frac{\zeta(s)\zeta(s+2\nu)\zeta(s-2\nu)}{\zeta(2s)}
        \prod_{p\big|\frac{\ell_1\ell_2}{(\ell_1,\ell_2)^2}}
            \frac{\eta_{\nu}(p)}{1+p^{-s}}
        \prod_{p|(\ell_1,\ell_2)} \frac{\eta_\nu(p^2)-p^{-s}}{1+p^{-s}} .
      \end{split}
    \]
  \end{lemma}

   \begin{proof}[Proof of lemma \ref{lemma:DirichletSeries}]
   See Hough \cite[lemma 6.3]{hough2012zero}.
    \end{proof}

  By \eqref{eqn:x_l(s)}, we have
  \[
    \begin{split}
      \mathcal{T}_1 & =  \zeta_q(1+2\delta)
           \sum_{\ell_1=1}^\infty\sum_{\ell_2=1}^\infty
           \sum_{g=1}^\infty \frac{1}{g} \underset{h_2}{{\sum}^{\flat}} \frac{1}{h_2^2}
           \frac{\mu(g\ell_1h_2)\mu(g\ell_2h_2)}{(gh_2)^{2\delta} \ell_1^{1+2\delta+it} \ell_2^{1+2\delta-it}}
            \underset{h_1|\ell_1\ell_2}{{\sum}^{\flat}} \frac{1}{h_1} \\
        & \hskip 10pt \cdot \sum_{n_1}\frac{\mu^2(g\ell_1h_2n_1)F_{\Upsilon,M}(g\ell_1h_2n_1)}{n_1^{1+2\delta+2it}} \sum_{n_2}\frac{\mu^2(g\ell_2h_2n_2)F_{\Upsilon,M}(g\ell_2h_2n_2)}{n_2^{1+2\delta-2it}}
          \sum_{d} \frac{\eta_{it}(d^2\ell_1\ell_2)}{d^{2+2\delta}},  \\
      \mathcal{T}_2 & = \zeta_q(1-2\delta) \frac{H_t(-\delta)}{H_t(\delta)} \left(\frac{q}{4\pi^2}\right)^{-2\delta}
           \sum_{\ell_1=1}^\infty\sum_{\ell_2=1}^\infty
           \sum_{g=1}^\infty \frac{1}{g} \underset{h_2}{{\sum}^{\flat}} \frac{1}{h_2^2}
           \frac{\mu(g\ell_1h_2)\mu(g\ell_2h_2)}{(gh_2)^{2\delta} \ell_1^{1+it} \ell_2^{1-it}}
            \underset{h_1|\ell_1\ell_2}{{\sum}^{\flat}} \frac{1}{h_1}
           \\
      & \hskip 20pt \cdot
      \sum_{n_1}\frac{\mu^2(g\ell_1h_2n_1)F_{\Upsilon,M}(g\ell_1h_2n_1)}{n_1^{1+2\delta+2it}} \sum_{n_2}\frac{\mu^2(g\ell_2h_2n_2)F_{\Upsilon,M}(g\ell_2h_2n_2)}{n_2^{1+2\delta-2it}}
           \sum_{d} \frac{\eta_{it}(d^2\ell_1\ell_2)}{d^{2-2\delta}}.
    \end{split}
  \]
  We first consider $\mathcal{T}_1$. By lemma \ref{lemma:DirichletSeries}, we have
  \[
    \begin{split}
      \mathcal{T}_1 & =
            \zeta_q(1+2\delta) \frac{\zeta(2+2\delta)}{\zeta(4+4\delta)} |\zeta(2+2\delta+2it)|^2
           \underset{r=gh}{{\sum}^{\flat}}  \frac{1}{g^{1+2\delta} h^{2+2\delta}}
            \\
        & \hskip 30pt \cdot \underset{(\ell_1\ell_2n_1n_2,r)=1}{\underset{(\ell_1,n_1)=1,(\ell_2,n_2)=1} {\underset{\ell_1,\ell_2,n_1,n_2}{{\sum}^{\flat}}}} \frac{\mu(\ell_1)\mu(\ell_2)}{\ell_1^{1+2\delta+it} \ell_2^{1+2\delta-it} n_1^{1+2\delta+2it} n_2^{1+2\delta-2it}}
          F_{\Upsilon,M}(\ell_1 n_1 r) F_{\Upsilon,M}(\ell_2 n_2 r) \\
        & \hskip 80pt \cdot
        \prod_{p\big|\frac{\ell_1\ell_2}{(\ell_1,\ell_2)^2}}
            \left(1+\frac{1}{p}\right)
            \left(\frac{\eta_{\nu}(p)}{1+p^{-2-2\delta}}\right)
        \prod_{p|(\ell_1,\ell_2)} \left(1+\frac{1}{p}\right)
         \left(\frac{\eta_\nu(p^2)-p^{-2-2\delta}}{1+p^{-2-2\delta}} \right).
    \end{split}
  \]
  Here the factor $(1+1/p)$ in the products is coming from the $h_1$-sum.
It follows from Mellin inversion that
  \[
    \begin{split}
      \mathcal{T}_1 & =
            \zeta_q(1+2\delta) \frac{\zeta(2+2\delta)}{\zeta(4+4\delta)} |\zeta(2+2\delta+2it)|^2
            \\
        & \hskip 30pt \cdot
        \frac{1}{(2\pi i)^2} \int\limits_{\alpha-i\infty}^{\alpha+i\infty}\int\limits_{\alpha-i\infty}^{\alpha+i\infty}
       \mathcal{G}_1(w_1,w_2,\delta,t)\Psi_{\Upsilon,M}(w_1)\Psi_{\Upsilon,M}(w_2) dw_1 dw_2,
    \end{split}
  \]
  where
  \[
    \begin{split}
      & \mathcal{G}_1(w_1,w_2,\delta,t) := \underset{r=gh}{{\sum}^{\flat}}
      \frac{1}{r^{1+2\delta+w_1+w_2} h}  \underset{(\ell_1\ell_2n_1n_2,r)=1}{\underset{(\ell_1,n_1)=1,(\ell_2,n_2)=1} {\underset{\ell_1,\ell_2,n_1,n_2}{{\sum}^{\flat}}}} \frac{\mu(\ell_1)\mu(\ell_2)}{\ell_1^{1+w_1+2\delta+it} \ell_2^{1+w_2+2\delta-it} }
       \\
      & \hskip 10pt \cdot
        \frac{1}{ n_1^{1+w_1+2\delta+2it} n_2^{1+w_2+2\delta-2it}}
        \left(\prod_{p|(\ell_1,\ell_2)} \left(1+\frac{1}{p}\right)^{-1}
         \left(\frac{(\eta_\nu(p^2)-p^{-2-2\delta})(1+p^{-2-2\delta})}{\eta_\nu(p)^2} \right) \right)
         \\
        & \hskip 60pt \cdot
         \left( \prod_{p|\ell_1}     \left(1+\frac{1}{p}\right)
            \left(\frac{\eta_{\nu}(p)}{1+p^{-2-2\delta}}\right) \right)
        \left( \prod_{p|\ell_2}     \left(1+\frac{1}{p}\right)
            \left(\frac{\eta_{\nu}(p)}{1+p^{-2-2\delta}}\right)  \right).
    \end{split}
  \]
  To write $\mathcal{G}_1(w_1,w_2,\delta,t)$ as an Euler product, we can use the following lemma.
  \begin{lemma}\label{lemma:MultiplicativeFunction2}
    Let $f_1,f_2,g_1,g_2,h,t$ be some multiplicative functions. Denote
    \[
      \begin{split}
        & F(k) = \sum_{k=r\ell_1n_1\ell_2n_2} \mu^2(r\ell_1n_1)\mu^2(r\ell_2n_2) h(r) f_1(\ell_1)f_2(\ell_2)
        g_1(n_1)g_2(n_2)
        \prod_{p|(\ell_1,\ell_2)}t(p).
      \end{split}
    \]
    Then $F$ is a multiplicative function.
  \end{lemma}
  \begin{proof}[Proof of lemma \ref{lemma:MultiplicativeFunction2}]
    Note that $F$ is supported on cubic free numbers.
    Let $k=ab^2$, where $a,b$ are square-free numbers.
    We only need to consider the decompostion of $k=r\ell_1n_1\ell_2n_2$ with
    $b|\ell_1n_1$ and $b|\ell_2n_2$;
    otherwise the contribution will be zero because of $\mu^2$.
    Write $a=r\ell_1'n_1'\ell_2'n_2'$ and $b=b_1b_1'=b_2b_2'$,
    where $\ell_1=b_1\ell_1'$, $\ell_2=b_2\ell_2'$, $n_1=b_1'n_1'$,
    and $n_2=b_2'n_2'$. Hence we have
    \[
      \begin{split}
        & F(k)=F(ab^2)= \sum_{a=r\ell_1'n_1'\ell_2'n_2'}h(r)
         f_1(\ell_1')f_2(\ell_2')g_1(n_1')g_2(n_2')  \\
        &\hskip 120pt \cdot \sum_{b=b_1b_1'=b_2b_2'} f_1(b_1)g_1(b_1')f_2(b_2)g_2(b_2')
         \prod_{p|(b_1,b_2)}t(p).
      \end{split}
    \]
    Since $b$ is square-free, it's not hard to check that $F$ is multiplicative.
  \end{proof}

  Now by lemma \ref{lemma:MultiplicativeFunction2}, we have
  \[
    \begin{split}
      & \mathcal{G}_1(w_1,w_2,\delta,t) \\
      & = \prod_{p} \Bigg( 1
        + \frac{1+1/p}{p^{1+2\delta+w_1+w_2}}
        -\frac{\eta_{it}(p)}{p^{1+2\delta+it+w_1}}\frac{1+1/p}{1+p^{-2-2\delta}}
        -\frac{\eta_{it}(p)}{p^{1+2\delta-it+w_2}}\frac{1+1/p}{1+p^{-2-2\delta}}
        \\
      & \hskip 40pt
        +\frac{1}{p^{1+2\delta+2it+w_1}}
        +\frac{1}{p^{1+2\delta-2it+w_2}}
        + \frac{1}{p^{2+4\delta+w_1+w_2}}
        + \frac{\eta_{it}(p^2)-p^{-2-2\delta}}{p^{2+4\delta+w_1+w_2}}\frac{1+1/p}{1+p^{-2-2\delta}}
         \\
      & \hskip 116pt
        -\frac{\eta_{it}(p)}{p^{2+4\delta-it+w_1+w_2}}\frac{1+1/p}{1+p^{-2-2\delta}}
        -\frac{\eta_{it}(p)}{p^{2+4\delta+it+w_1+w_2}}\frac{1+1/p}{1+p^{-2-2\delta}} \Bigg) .
     \end{split}
   \]
   Hence we get
   \[
     \begin{split}
       & \mathcal{G}_1(w_1,w_2,\delta,t) \\
      & = \frac{\zeta(4+4\delta)}{\zeta(2+2\delta)}
            \prod_{p} \Bigg( 1+\frac{1}{p^{2+2\delta}}
        + \frac{1}{p^{1+2\delta+w_1+w_2}}
        + \frac{1}{p^{3+4\delta+w_1+w_2}}
        + \frac{1}{p^{2+2\delta+w_1+w_2}}\\
        & \hskip 40pt
        + \frac{1}{p^{4+4\delta+w_1+w_2}}
        -\frac{\eta_{it}(p)}{p^{1+2\delta+it+w_1}}
        -\frac{\eta_{it}(p)}{p^{2+2\delta+it+w_1}}
        -\frac{\eta_{it}(p)}{p^{1+2\delta-it+w_2}}
        -\frac{\eta_{it}(p)}{p^{2+2\delta-it+w_2}}
        \\
      & \hskip 40pt
        +\frac{1}{p^{1+2\delta+2it+w_1}}
        +\frac{1}{p^{3+4\delta+2it+w_1}}
        +\frac{1}{p^{1+2\delta-2it+w_2}}
        +\frac{1}{p^{3+4\delta-2it+w_2}}
        + \frac{1}{p^{2+4\delta+w_1+w_2}}
         \\
      & \hskip 40pt
        + \frac{1}{p^{4+6\delta+w_1+w_2}}
        + \frac{\eta_{it}(p^2)-p^{-2-2\delta}}{p^{2+4\delta+w_1+w_2}}
        + \frac{\eta_{it}(p^2)-p^{-2-2\delta}}{p^{3+4\delta+w_1+w_2}}
         \\
      & \hskip 40pt
        -\frac{\eta_{it}(p)}{p^{2+4\delta-it+w_1+w_2}}
        -\frac{\eta_{it}(p)}{p^{3+4\delta-it+w_1+w_2}}
        -\frac{\eta_{it}(p)}{p^{2+4\delta+it+w_1+w_2}}
        -\frac{\eta_{it}(p)}{p^{3+4\delta+it+w_1+w_2}}\Bigg) \\
      & \hskip 10pt =: \frac{\zeta(1+2\delta+w_1+w_2)}
      {\zeta(1+2\delta+w_1)\zeta(1+2\delta+w_2)} \mathcal{H}_1(w_1,w_2,\delta,t).
    \end{split}
  \]
  Here the Euler product defining $\mathcal{H}_1(w_1,w_2,\delta,t)$ converges absolutely for
  \[
    \min\big\{\Re(w_1),\Re(w_2),\Re(w_1+w_2)\big\} > -1/2.
  \]
  We find that
  \[
    \mathcal{H}_1(0,0,\delta,t) = \frac{\zeta(4+4\delta)}{\zeta(2+2\delta)|\zeta(2+2\delta+2it)|^2}.
  \]
  Hence, by exactly the same argument as in \S\ref{subsec:S1}, we can prove that
  \begin{equation}\label{eqn:T1=}
    \mathcal{T}_1 = 1 + \frac{M^{-2(1-\Upsilon)\delta}-M^{-2\delta}}{(2\delta\Upsilon\log M)^2}
    + \mathcal{O}\left( \frac{\log\log q}{\log q}  M^{-2(1-\Upsilon)\delta} \right).
  \end{equation}
  And similarly, we can handle $\mathcal{T}_2$, getting
  \begin{equation}\label{eqn:T2=}
    \begin{split}
     \mathcal{T}_2 & = q^{-2\delta}
            \bigg| \frac{M^{-2\delta+2it}-M^{(1-\Upsilon)(-2\delta+2it)}}
            {\Upsilon(-2\delta+2it)\log M} \bigg|^2
             \\
      & \hskip 80pt
      - q^{-2\delta} \frac{M^{-2\delta(1-\Upsilon)}-M^{-2\delta}}{(2\delta\Upsilon\log M)^2}
      + \mathcal{O}\left(  \frac{\log\log q}{\log q}  M^{-2(1-\Upsilon)\delta} \right).
    \end{split}
  \end{equation}
  By \eqref{eqn:T12}, \eqref{eqn:T1=}, and \eqref{eqn:T2=}, we prove \eqref{eqn:SecondMoment-w(x)} in the theorem.

  \medskip
  To prove \eqref{eqn:SecondMoment-n}, we will apply lemma \ref{lemma:Kowalski-Michel}.
  Note that condition \eqref{eqn:condition1} is a consequence of the trivial bound
  $|M(1/2+\delta+it,f)|<M^{1/2+\varepsilon}$,
  the convexity bound $$L(1/2+\delta+it,f)\ll q^{1/4+\varepsilon}(1+|t|)^{1/2+\varepsilon},$$
  and the fact $\omega_f\ll (\log q)q^{-1}$.
  The result in theorem \ref{thm:MollificationS} guarantees condition \eqref{eqn:condition2} under our choice of the parameters.
  The proof is completed upon applying lemma \ref{lemma:Kowalski-Michel}.
 \end{proof}

\subsection{Away from the critical line}\label{subsec:SM_L}

Now, we want to remove the harmonic weights that appear in \S\ref{sec:mollification_away}.
Our main result in this subsection is the following theorem.
\begin{theorem}\label{thm:second_moment_natural}
 With notations as in \S\ref{sec:msm}, assume $s=\frac12+\delta+it$ where
  $$\frac{1}{\log q} \; \leq \; \delta \; \leq \; \frac12+\frac{10 \log\log q}{(1-\Upsilon)\log q}.$$ Then there exists
  an absolute constant $B>0$, such that for any $0<a<2(1-\Upsilon)$, we have
  \[
    \frac{1}{\mathcal A(q)}\sum_{f\in H_2(q)} \big(|LM(s,f)|^2 - 1\big) \,
    \ll_{\Upsilon,\vartheta,a,B} \; (1+|t|)^B M^{-a\delta}.
  \]
\end{theorem}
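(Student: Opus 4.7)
The plan is to parallel the proof of Theorem \ref{thm:mollificationL_h}, but for the natural (unweighted) average via Lemma \ref{lemma:Kowalski-Michel}. With $x = q^\kappa$ for a small fixed $\kappa > 0$, we apply Lemma \ref{lemma:Kowalski-Michel} to $\alpha_f = |LM(s,f)|^2 - 1$; condition \eqref{eqn:condition1} is immediate from Theorem \ref{thm:mollificationL_h}, and condition \eqref{eqn:condition2} follows from $|M(s,f)| \ll M^{1/2+\varepsilon}$, the convexity bound $L(s,f) \ll q^{1/4+\varepsilon}(1+|t|)^{1/2+\varepsilon}$, and $\omega_f \ll q^{-1+\varepsilon}$. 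This reduces the problem to bounding
\[
  \mathcal U(s) := \sum_{f \in H_2(q)} \omega_f \, \omega_f(x) \, \big( |LM(s,f)|^2 - 1 \big).
\]
Using the identity $|LM|^2 - 1 = |LM - 1|^2 + 2\,\Re(LM - 1)$, split $\mathcal U = \mathcal U_1 + 2\,\Re \mathcal U_2$ accordingly and bound each piece separately by $(1+|t|)^B M^{-a\delta}$.

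As in the proof of Lemma \ref{lemma:FirstMomentL_h}, the main idea is to first establish the bound at the right edge $\Re(s) = \tfrac12 + \delta_0$, where $\delta_0 = \tfrac12 + \tfrac{10 \log\log q}{(1-\Upsilon)\log q}$, and then interpolate to $\tfrac{1}{\log q} \le \delta \le \delta_0$ by a Phragmén--Lindelöf convexity argument, using the natural-average bound coming from Theorem \ref{thm:MollificationS2} (at small $\delta$) as the second input, just as the second moment bound served as input at the end of the proof of Lemma \ref{lemma:FirstMomentL_h}.

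For $\mathcal U_1$ at the right edge, expand $\omega_f(x)\,|LM(s,f) - 1|^2$ combinatorially via Hecke multiplicativity and the definition \eqref{eqn:w(x)} of $\omega_f(x)$, then apply Theorem \ref{thm:TwistedSecondMoment} exactly as in the proof of Theorem \ref{thm:MollificationS2}; the gain comes from $LM(s,f) - 1 = \sum_{n\ge 2} \lambda_f(n) c(n)/n^s$ with $c(n)$ supported on $n > M^{1-\Upsilon}$, which in the mollified second moment computation produces an extra decay factor of size $M^{-2(1-\Upsilon)\delta_0}$. For $\mathcal U_2$ at the right edge, we follow the proof of Lemma \ref{lemma:FirstMomentL_h}: write $B(s,f) = LM(s,f) - 1$ and regularize via $\frac{1}{2\pi i}\int \Gamma(w) B(s+w,f) X^w \,dw$ at $X = q^{1-\vartheta}$; after shifting the contour to $\Re(w) = -\delta_0 + 1/\log q$ (Cauchy--Schwarz plus the $\mathcal U_1$ bound controls the tail) one must estimate, for each $n$,
\[
  \sum_{f \in H_2(q)} \omega_f \, \omega_f(x) \, \lambda_f(n)
  \;=\; \sum_{dl^2 \le x}\frac{1}{dl^2} \sum_{h \mid (d^2, n)} \sum_{f \in H_2(q)} \omega_f \, \lambda_f\!\left(\frac{d^2 n}{h^2}\right),
\]
by the Hecke relations; the Petersson formula (Lemma \ref{lemma:PTF}) gives a diagonal plus a Kloosterman contribution bounded using Weil's bound for $n \le q^{1-\vartheta}(\log q)^2$.

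The main obstacle is the careful resummation over the divisor structure in the $\omega_f(x)$-twisted first moment forming $\mathcal U_2$: the combined range $dl^2 \le x$ together with the inner sum $\sum_{h\mid(d^2,n)}$ must be controlled so that, for $x = q^\kappa$ with $\kappa$ suitably small, the off-diagonal Kloosterman contribution is $\ll q^{-\varsigma}$ and the diagonal only produces a harmless main term that is absorbed into $M^{-a\delta_0}$. Once the edge bounds for $\mathcal U_1$ and $\mathcal U_2$ are established, the Phragmén--Lindelöf interpolation across $\tfrac{1}{\log q} \le \delta \le \delta_0$ completes the proof in the same manner as the last line of the proof of Lemma \ref{lemma:FirstMomentL_h}.
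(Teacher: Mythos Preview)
Your approach is workable in spirit but differs from the paper's, and one step is misstated.

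The paper does \emph{not} apply Lemma~\ref{lemma:Kowalski-Michel} to $\alpha_f = |LM(s,f)|^2-1$ and then work with the $\omega_f\omega_f(x)$-weighted sum. Instead it proves the theorem from two standalone natural-average lemmas: Lemma~\ref{lemma:SecondMoment} bounds $\tfrac{1}{\mathcal A(q)}\sum_f |LM(s,f)-1|^2$ (the proof proceeds as in Kowalski--Michel's Corollary~1, using Theorem~\ref{thm:MollificationS2} as the input near the critical line), and Lemma~\ref{lemma:FirstMoment} bounds $\tfrac{1}{\mathcal A(q)}\sum_f (LM(s,f)-1)$. For the latter the paper works at the right edge $\delta_0$ exactly as you suggest, but to remove the harmonic weight from the resulting sum $\sum_f \lambda_f(n)$ it replaces $1/\omega_f$ by an approximate functional equation for $L(1,\operatorname{sym}^2 f)$ and then invokes the large-sieve inequalities of Duke--Friedlander--Iwaniec and Iwaniec--Michel to control the long range $M^{1-\Upsilon}\le l<q^{1+\varepsilon}$ of the $\operatorname{sym}^2$-coefficients. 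This is quite different from your plan of keeping the truncation $\omega_f(x)$ and applying Petersson to $\lambda_f(d^2n/h^2)$; your route avoids the large sieve altogether and is arguably more elementary, while the paper's route yields Lemma~\ref{lemma:FirstMoment} as a clean natural-average statement.

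One genuine imprecision in your proposal: for $\mathcal U_1$ at the right edge you say ``apply Theorem~\ref{thm:TwistedSecondMoment} exactly as in the proof of Theorem~\ref{thm:MollificationS2}''. That theorem is only stated for $-c/\log q\le\delta\le\vartheta$ and concerns sums of the shape $\sum_f\omega_f\lambda_f(\ell)|L(1/2+\delta+it,f)|^2$, whereas $|LM-1|^2$ is not of this shape (it equals $|LM|^2-2\Re LM+1$, and $\Re LM$ is linear in $L$, not quadratic). At $\Re(s)=1/2+\delta_0>1$ you should instead use the absolutely convergent Dirichlet series $LM(s,f)-1=\sum_{n>M^{1-\Upsilon}}\lambda_f(n)c(n)n^{-s}$, multiply out, insert $\omega_f(x)$, and apply Petersson directly; the support condition $n>M^{1-\Upsilon}$ then yields the saving you describe. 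With that correction (and the standard bilinear complexification needed to run Phragm\'en--Lindel\"of, since $|LM-1|^2$ is not holomorphic in $s$), your scheme goes through.
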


\begin{proof}[Proof of theorem \ref{thm:second_moment_natural}]
As in \S\ref{sec:mollification_away}, the proof follows easily from lemmas
\ref{lemma:SecondMoment}, \ref{lemma:FirstMoment}, stated and proved below.
\end{proof}

\begin{lemma}\label{lemma:SecondMoment}
  With notations as above, assume $s=\frac12+\delta+it$ where
  $\delta\geq \frac{1}{(2\log q)}$. Then for any $0<a<2(1-\Upsilon)$ we have
  \[
    \frac{1}{\mathcal A(q)}\sum_{f\in H_2(q)} |LM(s,f)-1|^2\,
    \ll_{\Upsilon,\vartheta,a,B}\; (1+|t|)^B M^{-a\delta}.
  \]
\end{lemma}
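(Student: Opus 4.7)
The plan is to mirror the proof of the harmonic version (lemma \ref{lemma:SecondMomentL_h}) by first passing from the natural to a weighted harmonic average via lemma \ref{lemma:Kowalski-Michel}, and then controlling the resulting weighted sum by combining theorem \ref{thm:MollificationS2} with a matching first-moment estimate.

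First I would apply lemma \ref{lemma:Kowalski-Michel} with $\alpha_f = |LM(s,f)-1|^2$. Condition \eqref{eqn:condition1} is given directly by lemma \ref{lemma:SecondMomentL_h}, which actually yields the much stronger bound $(1+|t|)^B M^{-a\delta}$. Condition \eqref{eqn:condition2} follows from $\omega_f \ll q^{-1+\varepsilon}$, the trivial bound $|M(s,f)| \ll M^{1/2+\varepsilon}$, and the convexity bound $|L(s,f)| \ll q^{1/4+\varepsilon}(1+|t|)^{1/2+\varepsilon}$, yielding $\omega_f|LM(s,f)-1|^2 \ll q^{-2\vartheta+\varepsilon}(1+|t|)^B$ after using $M \leq q^{1/2-2\vartheta}$. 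Choosing $x = q^\kappa$ for some small $\kappa > 0$, lemma \ref{lemma:Kowalski-Michel} reduces matters to establishing
\[
    \frac{1}{\zeta(2)}\sum_{f\in H_2(q)}\omega_f\cdot\omega_f(x)\cdot|LM(s,f)-1|^2 \; \ll \; (1+|t|)^B M^{-a\delta}.
\]

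Next I would expand $|LM(s,f)-1|^2 = |LM(s,f)|^2 - 2\Re LM(s,f) + 1$ and treat the three weighted sums separately. In the near-critical range $1/\log q \leq \delta \leq 100\log\log q/\log q$, theorem \ref{thm:MollificationS2} supplies the quadratic piece, whose right-hand side simplifies for real $\delta > 0$ to $1 + \mathcal{O}(\log\log q/\log q \cdot M^{-2(1-\Upsilon)\delta})$. The constant piece $\frac{1}{\zeta(2)}\sum_f \omega_f\omega_f(x)$ equals $1 + o(1)$ by standard Hecke orthogonality. The linear piece is the analog of lemma \ref{lemma:FirstMomentL_h} with an extra $\omega_f(x)$ factor, which I would establish by opening $LM(s,f) - 1 = \sum_{n > M^{1-\Upsilon}}\lambda_f(n)c(n)/n^s$ with $c(n) = \sum_{d\mid n}\mu(d)F_{\Upsilon,M}(d)$, combining with $\omega_f(x) = \sum_{dl^2\leq x}\lambda_f(d^2)/(dl^2)$ via Hecke multiplicativity, and applying the Petersson trace formula to the resulting multiple sum. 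When the three pieces are assembled, the main terms cancel and leave an error of the required shape.

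For the remaining range $100\log\log q/\log q \leq \delta \leq 1/2 + 10\log\log q/((1-\Upsilon)\log q)$, where theorem \ref{thm:MollificationS2} is unavailable, I would mimic the Mellin-contour argument of lemma \ref{lemma:FirstMomentL_h}: establish the bound directly at the right endpoint $\delta_0 = 1/2 + 10\log\log q/((1-\Upsilon)\log q)$ by shifting contours and using the fact that the Dirichlet coefficients $c(n)$ of $LM(s,f)-1$ vanish on $n\leq M^{1-\Upsilon}$, so that the Petersson formula applied to the truncated sum gives a power-saving $q^{-B}$, and then interpolate to the rest of the strip via the Phragm\'en--Lindel\"of convexity principle. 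The main obstacle will be the linear-moment calculation with the auxiliary $\omega_f(x)$ weight: as in the passage from theorem \ref{thm:MollificationS} to theorem \ref{thm:MollificationS2}, one must separate the variables introduced by $\omega_f(x)$, combine with the Dirichlet coefficients of $LM(s,f)$ using lemma \ref{lemma:DirichletSeries}, and unwind the resulting Euler product by Mellin inversion; verifying that the residues at the encountered poles cancel against those produced by the quadratic and constant pieces is the most delicate bookkeeping step, but the structure is essentially identical to the $\mathcal T_1$-computation in \S\ref{sec:removing_weights} with one mollifier factor rather than two.
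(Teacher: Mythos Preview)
Your overall strategy---pass to the $\omega_f\omega_f(x)$-weighted sum via lemma \ref{lemma:Kowalski-Michel}, split $|LM-1|^2$ into quadratic, linear, and constant pieces, feed in theorem \ref{thm:MollificationS2}, and interpolate by convexity---is essentially the paper's approach. The paper's one-line proof simply says ``follow Kowalski--Michel's proof of Corollary~1, using theorem \ref{thm:MollificationS2} in place of the harmonic second moment and the pointwise bound $\omega_f(x)\ll\log q$'' (the paper writes $\omega_f$, almost certainly a typo for $\omega_f(x)$). So you have correctly unpacked what is only gestured at there.

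There is, however, a real gap in your treatment of the \emph{linear} piece near the critical line. You write that in the range $1/\log q\le\delta\le 100\log\log q/\log q$ you would ``open $LM(s,f)-1=\sum_{n>M^{1-\Upsilon}}\lambda_f(n)c(n)/n^s$'' and apply Petersson. That Dirichlet series only converges absolutely for $\Re(s)>1$; at $\Re(s)=1/2+O(\log\log q/\log q)$ you cannot simply expand it. What works instead is either (i) open the finite mollifier $M(s,f)$ and combine with a twisted first moment of $L(s,f)$ via the approximate functional equation (parallel to the $\mathcal T_1$ computation but with one mollifier factor), or (ii) note that for the convexity argument one only needs the first moment at the two boundary abscissae: at the right edge $\delta_0=1/2+\ldots$ your Dirichlet-series/Petersson step is legitimate, and at the left edge $\delta\sim 1/\log q$ one can simply bound $\sum_f\omega_f\omega_f(x)|LM-1|\le(\log q)\big(\sum_f\omega_f|LM-1|^2\big)^{1/2}$ using $\omega_f(x)\ll\log q$ and lemma \ref{lemma:SecondMomentL_h}; the $\log q$ loss is harmless there because $M^{-a\delta}\asymp 1$. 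This second route is what the paper's phrase ``$\omega_f\ll\log q$'' is really pointing to.

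A minor inaccuracy: the right-hand side of theorem \ref{thm:MollificationS2} does \emph{not} simplify to $1+O\big((\log\log q/\log q)\,M^{-2(1-\Upsilon)\delta}\big)$ for $\delta>0$. The secondary main terms are only $O\big(M^{-2(1-\Upsilon)\delta}\big)$, without the extra $\log\log q/\log q$. This does not harm the conclusion, since the lemma allows any $a<2(1-\Upsilon)$, but your stated simplification overstates the saving.
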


\begin{proof}
  Together with theorem \ref{thm:MollificationS2} and the fact
  that $\omega_f\ll \log q$, the proof of the above lemma is
  similar to Kowalski--Michel \cite[Corollary 1]{kowalski2000explicit}.
\end{proof}

\begin{lemma}\label{lemma:FirstMoment}
  With notations as above, assume that
  $\frac{1}{\log q}\leq \delta \leq \frac12+\frac{10 \log\log q}{(1-\Upsilon)\log q}$.
  Then for any $0<a<2(1-\Upsilon)$ we have
  \[
    \frac{1}{\mathcal A(q)} \sum_{f\in H_2(q)} \big(LM(s,f)-1\big) \,
    \ll_{\Upsilon,\vartheta,B} \; (1+|t|)^B M^{-a\delta}.
  \]
\end{lemma}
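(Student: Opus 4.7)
My plan is to adapt the proof of lemma~\ref{lemma:FirstMomentL_h} with the harmonic weights stripped via lemma~\ref{lemma:Kowalski-Michel}, and then use a convexity argument to extend the edge bound across the strip.

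\emph{Step 1 (right edge).} Set $\delta_0:=\tfrac12+\tfrac{10\log\log q}{(1-\Upsilon)\log q}$. I would first prove the bound at $\delta=\delta_0$. Apply lemma~\ref{lemma:Kowalski-Michel} to $\alpha_f:=LM(s,f)-1$ with $x:=q^\kappa$ for a sufficiently small absolute constant $\kappa=\kappa(\Upsilon,\vartheta)>0$; conditions \eqref{eqn:condition1}--\eqref{eqn:condition2} follow from the trivial bound $|M(s,f)|\ll M^{1/2+\varepsilon}$, the convexity bound for $L(s,f)$, and lemma~\ref{lemma:SecondMomentL_h}, just as in the proof of theorem~\ref{thm:MollificationS2}. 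The task reduces to bounding
\[
\frac{1}{\zeta(2)}\sum_{f\in H_2(q)}\omega_f\,\omega_f(x)\bigl(LM(s,f)-1\bigr).
\]

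Following the argument of lemma~\ref{lemma:FirstMomentL_h}, write $LM(s,f)-1=\sum_{n>M^{1-\Upsilon}}\lambda_f(n)c(n)/n^s$ (since $c(n)=0$ for $1<n\leq M^{1-\Upsilon}$), smooth via the $\Gamma$-integral at scale $X:=q^{1-\vartheta}$, and shift the contour to $\Re(w)=-\delta_0+1/\log q$, so that $LM(s,f)-1=T_1(s,f)-T_2(s,f)$. The $T_2$-piece, weighted by $\omega_f\omega_f(x)$, is bounded by Cauchy--Schwarz combined with the natural-weighted second moment of theorem~\ref{thm:MollificationS2}, yielding $\ll(1+|t|)^B M^{-2(1-\Upsilon)\delta_0}$. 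For the $T_1$-piece, expand $\omega_f(x)\lambda_f(n)$ using $\omega_f(x)=\sum_{dl^2\leq x}\lambda_f(d^2)/(dl^2)$ together with the Hecke relation $\lambda_f(d^2)\lambda_f(n)=\sum_{e\mid(d^2,n)}\lambda_f(d^2n/e^2)$, and apply the Petersson trace formula (lemma~\ref{lemma:PTF}). A short computation shows that the diagonal $d^2n/e^2=1$ with $e\mid(d^2,n)$ forces $n=d^2$; since $n>M^{1-\Upsilon}$ demands $d>M^{(1-\Upsilon)/2}$ while $d\leq x=q^\kappa$, taking $\kappa<\tfrac{1-\Upsilon}{2}\cdot\tfrac{\log M}{\log q}$ eliminates the diagonal entirely. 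The off-diagonal Kloosterman part is controlled by Weil's bound and $J_1(y)\ll y$ exactly as in lemma~\ref{lemma:FirstMomentL_h}, contributing only $\ll q^{-\eta}$ for some $\eta>0$. This establishes the desired edge bound with exponent $a_0<2(1-\Upsilon)$.

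\emph{Step 2 (convexity).} For $a\in(0,2(1-\Upsilon))$, pick $a_0\in(a,2(1-\Upsilon))$ and define
\[
g(s):=M^{a_0(s-1/2)}\cdot\frac{1}{\mathcal{A}(q)}\sum_{f\in H_2(q)}\bigl(LM(s,f)-1\bigr),
\]
holomorphic in the strip $1/\log q\leq\Re(s)-\tfrac12\leq\delta_0$. On the right edge, Step~1 gives $|g(s)|\ll(1+|t|)^B$. On the left edge, Cauchy--Schwarz together with lemma~\ref{lemma:SecondMoment} gives $|g(s)|\ll(1+|t|)^{B/2}M^{a_0/(2\log q)}$, and since $\log M/\log q\leq 1/2-2\vartheta$ the factor $M^{a_0/(2\log q)}$ is bounded by an absolute constant. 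The Phragm\'en--Lindel\"of principle then yields $|g(s)|\ll(1+|t|)^B$ throughout the strip, hence the desired $\ll(1+|t|)^B M^{-a_0\delta}\leq(1+|t|)^B M^{-a\delta}$.

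\emph{Main obstacle.} The delicate point is the diagonal analysis of the Petersson trace formula applied to $\omega_f(x)\lambda_f(n)$ in Step~1: verifying that $d^2n=e^2$ with $e\mid(d^2,n)$, $d\leq x$, and $n>M^{1-\Upsilon}$ admits no solutions for a suitable $\kappa$, and tracking the interaction between $\omega_f(x)$ and the mollifier through the Hecke expansion so that the off-diagonal Kloosterman bookkeeping still produces a power saving in $q$. Once this is in place, the Cauchy--Schwarz estimate on $T_2$ and the convexity interpolation are routine adaptations of the corresponding steps in lemma~\ref{lemma:FirstMomentL_h}.
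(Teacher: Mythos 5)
Your overall architecture (edge estimate at $\delta_0=\tfrac12+\tfrac{10\log\log q}{(1-\Upsilon)\log q}$, the $T_1-T_2$ decomposition via the $\Gamma$-smoothing at scale $X=q^{1-\vartheta}$, and Phragm\'en--Lindel\"of to fill in the strip) matches the paper, but your deharmonization step has a genuine quantitative gap. You reduce to the $\omega_f\,\omega_f(x)$-weighted average by invoking lemma \ref{lemma:Kowalski-Michel}, which only guarantees an error $\mathcal{O}(q^{-\varsigma})$ for \emph{some} unspecified $\varsigma=\varsigma(\kappa,\varrho)>0$. The bound you must prove at the edge is $\ll M^{-a\delta_0}\asymp q^{-a(1/2-2\vartheta)\delta_0}$ with $a$ arbitrarily close to $2(1-\Upsilon)$, i.e.\ roughly $q^{-(1-\Upsilon)/2}$ (about $q^{-0.26}$ and $q^{-0.18}$ for the values $\Upsilon=0.48,\,0.64$ actually used later). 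Nothing in the statement of lemma \ref{lemma:Kowalski-Michel} ensures $\varsigma$ exceeds this, and the tension is real: shrinking $\kappa$ (which you do to control the range of $d\le x$ in $\omega_f(x)$) typically shrinks $\varsigma$ as well. So the very first line of your Step 1 already concedes an error that may swamp the target bound, and the convexity step then inherits a wrong edge estimate. (Incidentally, you do not actually need to kill the diagonal $n=d^2$: since $n>M^{1-\Upsilon}$ forces $d>M^{(1-\Upsilon)/2}$, the surviving diagonal contributes $\ll M^{-(1-\Upsilon)+\varepsilon}$ anyway; the constraint $\kappa<\tfrac{1-\Upsilon}{2}\cdot\tfrac{\log M}{\log q}$ is superfluous. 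The problem is solely the $q^{-\varsigma}$ error.)

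The paper avoids this entirely: it never uses lemma \ref{lemma:Kowalski-Michel} for the first moment. Instead it writes $\tfrac{1}{\mathcal{A}(q)}=\tfrac{1}{\zeta(2)}\,\omega_f\,L(1,\operatorname{sym}^2 f)\big(1+\mathcal{O}((\log q)^4/q)\big)$ exactly, replaces $L(1,\operatorname{sym}^2 f)$ by the Iwaniec--Michel Dirichlet series over $l<q^{1+\varepsilon}$, applies Petersson for $l<M^{1-\Upsilon}$, and controls the range $M^{1-\Upsilon}\le l<q^{1+\varepsilon}$ (which your truncated $\omega_f(x)$ was designed to avoid) with the two large sieve inequalities of Duke--Friedlander--Iwaniec and Iwaniec--Michel; every error is then explicitly $\ll M^{-(1-\Upsilon)+\varepsilon}$ or $q^{-1+\varepsilon}$, which demonstrably beats $M^{-a\delta_0}$. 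A secondary slip: for the $T_2$ piece you cite theorem \ref{thm:MollificationS2}, which is restricted to $|t|\le\frac{100\log\log q}{\log q}$, whereas the $w$-integral forces you to evaluate the second moment at arbitrary heights $t+\Im(w)$; the correct input is lemma \ref{lemma:SecondMoment}, which carries the $(1+|t|)^B$ dependence you need.
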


\begin{proof}[Proof of lemma \ref{lemma:FirstMoment}]
  We will first handle the case $\Re(s)= \frac12+\delta_0$ where we set\linebreak
  $\delta_0 = \frac12+\frac{10 \log\log q}{(1-\Upsilon)\log q}$.
  Put $B(s,f)=LM(s,f)-1$. As in the proof of lemma \ref{lemma:FirstMomentL_h},
  for $\Re(s)>1$, we have
  \[
    LM(s,f) = \sum_{n=1}^{\infty} \frac{\lambda_f(n)}{n^{s}} c(n),
    \quad \textrm{where}\quad
    c(n) = \sum_{d|n} \mu(d) F_{\Upsilon,M}(d),
  \]
  and
  \[
    \begin{split}
      B(s,f) & = T_1(s,f) - T_2(s,f).
    \end{split}
  \]

  We first estimate the contribution of the $T_2(s,f)$ terms.
  By Cauchy's inequality and lemma \ref{lemma:SecondMoment},
  for some constant $B>0$ we have
  \begin{align*}
      &  \frac{ \mathcal{A}\Big( \big\{ |T_2(s,f)| \big\};q \Big)}{\mathcal{A}(q)}
     \ll  X^{-\delta_0+\delta_1} \frac{ \mathcal{A}\Big( \Big\{  \int\limits_{(-\delta_0+\delta_1)} |\Gamma(w)| |B(w+s,f)| |dw| \Big\};q \Big)}{\mathcal{A}(q)}
     \\
     & \hskip 75pt \ll   (1+|t|)^B X^{-\delta_0+\delta_1}
     \ll (1+|t|)^B q^{-(1-2\vartheta)\delta_0}
     \ll (1+|t|)^B M^{-2(1-\Upsilon)\delta_0}.
  \end{align*}

  It remains now to estimate the $T_1$ contribution.
  As before, we have
  \[
    \begin{split}
      T_1(s,f)  = \sum_{M^{1-\Upsilon}<n\leq X(\log q)^2} \frac{\lambda_f(n)c(n)}{n^s} e^{-n/X}
      + \mathcal{O}\Big(q^{-B}\Big).
    \end{split}
  \]
  In order to bound $\frac{1}{\mathcal{A}(q)} \mathcal{A}\big( \big\{ T_1(s,f) \big\};q \big)$,
  for $M^{1-\Upsilon}<n\leq X(\log q)^2$, we consider
  \[
    \frac{ \mathcal{A}\big( \big\{ \lambda_f(n) \big\};q \big)}{\mathcal{A}(q)}
    = \frac{1}{\zeta(2)}   \sum_{f\in H_2(q)}  \omega_f \cdot L(1,\operatorname{sym}^2 f)  \lambda_f(n)
    + \mathcal O\left(\frac{(\log q)^5}{q}\right).
  \]
  We can replace $L(1,\operatorname{sym}^2 f)$ by the following Dirichlet series
  (see e.g. Iwaniec--Michel \cite[Lemma 3.1]{iwaniec2001second})
  \[
    L(1,\operatorname{sym}^2 f)
    = \sum_{l<q^{1+\varepsilon}} \frac{\lambda_f(l^2)}{l} V_1\left(\frac{l}{q}\right)
    + \frac{L_\infty(0,\sym^2f)}{qL_\infty(1,\sym^2f)}
    \sum_{l<q^{1+\varepsilon}} \lambda_f(l^2) V_0\left(\frac{l}{q}\right) + \mathcal O(q^{-A}),
  \]
  where $L_\infty(w,\sym^2f) := \pi^{-3w/2}\Gamma(\frac{w+1}{2})
  \Gamma(\frac{w+k-1}{2})\Gamma(\frac{w+k}{2})$ and
  \[
    V_w(y) := \frac{1}{2\pi i} \int\limits_{(2)} y^{-u}
    \frac{L_\infty(w+u,\sym^2f)}{L_\infty(w,\sym^2f)} \zeta^{(q)}(2w+2u) \frac{du}{u}.
  \]
  Here $\zeta^{(q)}(w)$ stands for the partial zeta function
  with local factors at primes of $q$ removed. Thus
  \[
    \begin{split}
      \frac{ \mathcal{A}\big( \big\{ \lambda_f(n) \big\};q \big)}{\mathcal{A}(q)}
      & = \frac{1}{\zeta(2)}   \sum_{l<q^{1+\varepsilon}} \frac{V_1(l/q)}{l}
    \sum_{f\in H_2(q)}  \omega_f \cdot \lambda_f(l^2) \lambda_f(n)  \\
      & \hskip -45pt + \frac{L_\infty(0,\sym^2f)}{q\zeta(2)L_\infty(1,\sym^2f)}
       \sum_{l<q^{1+\varepsilon}} V_0(l/q)
    \sum_{f\in H_2(q)}  \omega_f \cdot \lambda_f(l^2) \lambda_f(n)
    + \mathcal O\left(\frac{(\log q)^5}{q}\right).
    \end{split}
  \]
  We shall only deal with the first term above, since
  the second one can be handled similarly.
  For $l < M^{1-\Upsilon}$, by applying lemma \ref{lemma:PTF}, we arrive at
  \[
    \begin{split}
       & \frac{1}{\zeta(2)}   \sum_{l<M^{1-\Upsilon}} \frac{V_1(l/q)}{l}
         \bigg( \delta_{n,l^2} - 2\pi \sum_{c\,\equiv\,0\,(\mod \hskip-3ptq)} \frac{S(l^2,n;c)}{c}
         J_{1}\left(\frac{4\pi l\sqrt{n}}{c}\right) \bigg) .
    \end{split}
  \]
  The contribution from the first term above to
  $\frac{1}{\mathcal{A}(q)} \mathcal{A}\big( \big\{ T_1(s,f) \big\};q \big)$ is
  \[
    \ll \; q^\varepsilon \sum_{M^{(1-\Upsilon)/2}<l<M^{1-\Upsilon}}
    \frac{1}{l^3} \; \ll \; M^{-(1-\Upsilon)+\varepsilon}.
  \]
  By the Weil bound for the Kloosterman sums, we know that
  the contribution from the second term above to
  $\frac{1}{\mathcal{A}(q)} \mathcal{A}\big( \big\{ T_1(s,f) \big\};q \big)$ is
  \[
    \begin{split}
      & \ll q^\varepsilon \sum_{l<M^{1-\Upsilon}}\frac{1}{l}
    \sum_{M^{1-\Upsilon}<n<X(\log q)^2}\frac{1}{n}
    \sum_{c\equiv0(q)} \frac{(c,n,l^2)^{1/2}\tau(c)}{c^{1/2}} \cdot \frac{l\sqrt{n}}{c}
      \\
      &  \ll M^{1-\Upsilon} q^{-1+\varepsilon} \ll M^{-(1-\Upsilon)}.
    \end{split}
  \]
  Hence we get that the contribution from $l<M^{1-\Upsilon}$ to
  $\frac{1}{\mathcal{A}(q)} \mathcal{A}\big( \big\{ T_1(s,f) \big\};q \big)$ is
  $$\ll M^{-(1-\Upsilon)+\varepsilon}.$$

  Now we are going to treat the case $M^{1-\Upsilon}\leq l < q^{1+\varepsilon}$.
  We will use the following two large sieve inequalities
  (see \cite[Theorem 1]{duke1994bounds})
  \[
    \sum_{f\in H_2(q)} \omega_f \Big|\sum_{n\leq N} \lambda_f(n) a_n \Big|^2
    \ll q^\varepsilon (N/q+1) \sum_{n\leq N} |a_n|^2,
  \]
  and (see \cite[Theorem 5.1]{iwaniec2001second})
  \[
    \sum_{f\in H_2(q)} \omega_f \bigg|\sum_{l} \lambda_f(l^2) g\left(\frac{l}{L}\right)\bigg|^2
    \ll q^\varepsilon (X^2/q+X).
  \]
  Here $g$ is a smooth function supported in $[1,2]$ which satisfies $g^{(j)}(x)\ll1$.
  By applying a smooth partition of unity to the $l$-sum, we know that the contribution
  of terms with $M^{1-\Upsilon}\leq l < q^{1+\varepsilon}$ is
  \[
    \begin{split}
      & \ll \sup_{M^{1-\Upsilon}\ll L \ll q^{1+\varepsilon}} \frac{1}{L}
      \sum_{f\in H_2(q)} \omega_f \sum_{l} \lambda_f(l^2) g\left(\frac{l}{L}\right)
      \sum_{M^{1-\Upsilon}<n\leq X(\log q)^2} \frac{\lambda_f(n)a_n}{n^s} \\
      & \ll \sup_{M^{1-\Upsilon}\ll L \ll q^{1+\varepsilon}} \frac{1}{L}
      \bigg(\sum_{f\in H_2(q)} \omega_f \bigg|\sum_{l} \lambda_f(l^2)
      g\left(\frac{l}{L}\right)\bigg|^2\bigg)^{1/2} \\
      & \hskip 120pt \cdot
      \bigg(\sum_{f\in H_2(q)} \omega_f \bigg| \sum_{M^{1-\Upsilon}<n\leq X(\log q)^2}
      \frac{\lambda_f(n)a_n}{n^s} \bigg|^2\bigg)^{1/2} \\
      & \ll M^{-(1-\Upsilon)+\varepsilon}.
    \end{split}
  \]
  Hence
  \[
    \frac{\mathcal{A}\big( \big \{ T_1(s,f) \big\};q \big)}{\mathcal{A}(q)}
     \ll M^{-(1-\Upsilon)+\varepsilon},
  \]
  and
  \[
   \frac{\mathcal{A}\big( \big\{(LM(1/2+\delta_0+it,f)-1) \big\};q \big)}{\mathcal{A}(q)}
    \ll_{B,a}  (1+|t|)^B M^{-(2(1-\Upsilon)-\varepsilon)\delta_0}.
  \]
 This completes the proof for $\Re(s)=1/2+\delta_0 = 1+\frac{10 \log\log q}{(1-\Upsilon)\log q}$.
  The result for $$\frac{1}{\log q}\; \leq \;\delta \leq 1/2+\frac{10 \log\log q}{(1-\Upsilon)\log q}$$
   follows from lemma \ref{lemma:SecondMoment} and the convexity argument.
\end{proof}


\section{Proof of theorem \ref{thm:RealZero}}\label{sec:thmrz}

The proofs 
depend on Selberg's lemma in \cite[Lemma 14]{selberg1946contributions2}
(see also \cite[Lemma 2.1]{conrey2002real}), and
are similar to those of \cite[theorems 1.8 and 1.5]{goldfeld2016super}.
One can see the similarity by replacing $K$ in \cite{goldfeld2016super} with $q^{1/2}$,
so we don't give the details.
In this section, we will prove that for at least $49\%$
of the modular L-functions of weight $2$ and level $q$ we have $L(\sigma,f)>0$
for $\sigma\in(1/2,1]$.

Let $M=q^{(1-5\vartheta)/2}$.
We apply Selberg's lemma with the choices
$$
  H=\frac{S}{\log q}, \quad
  W_0 = \frac{1}{2}-\frac{R}{\log q},\quad
  W_1=1+\frac{10 \log\log q}{(1-\Upsilon)\log q},\quad
  \phi(s) = LM(s, f),
$$
where $R$ and $S$ are fixed positive parameters which will be chosen later.
It follows that

\begin{equation}\label{eqn:sumtoI}
  4S \sum_{\substack{\beta\geq\frac{1}{2}-\frac{R}{\log q}\\0\leq\gamma\leq \frac{2S}{3\log q} \\ L(\beta+i\gamma,f)=0}}
  \cos\left(\frac{\pi \,\gamma\, \log q}{2S}   \right)
        \sinh\left(\frac{\pi \big(R+(\beta-1/2)\log q\big)}{2S}\right)
 \; \leq \; I_1(f)+I_2(f)+I_3(f),
\end{equation}
where
\begin{equation}\label{eqn:I(f)}
  \begin{split}
     I_1(f) \; & := \int\limits_{-S}^{S} \cos\left(\frac{\pi t}{2S}\right)
        \log\left|LM\left(\frac{1}{2}-\frac{R}{\log q}+i\frac{t}{\log q},f\right)\right|dt,  \\
     I_2(f) & \; := \int\limits_{-R}^{(W_1-1/2)\log q} \sinh\left(\frac{\pi(u+R)}{2S}\right)
        \log\left|LM\left(\frac{1}{2}+\frac{u}{\log q}+i\frac{S}{\log q},f\right)\right|^2 du, \\
     I_3(f) & \; := -\Re \int\limits_{-S}^{S} \cos\left(\pi\frac{(W_1-1/2)\log q -R+it}{2iS}\right)
        \log LM\left(W_1+i\frac{t}{\log q},f\right) dt.
  \end{split}
\end{equation}

Now, in the sum over zeros on the left hand side of (\ref{eqn:sumtoI}),
the weight $\cos\left(\frac{\pi\gamma}{2H}\right)$
can be replaced by $1$ (see \cite[eq. (7.3)]{goldfeld2016super}).
It follows that
\begin{equation}\label{eqn:sumtoI2}
  4S \sum_{\substack{\beta\geq\frac{1}{2}-\frac{R}{\log q}\\0\leq\gamma\leq \frac{2S}{3\log q} \\ L(\beta+i\gamma,f)=0}}
        \sinh\left(\frac{\pi \big(R+(\beta-1/2)\log q\big)}{2S}\right)
 \; \leq \; I_1(f)+I_2(f)+I_3(f).
\end{equation}

\begin{claim}\label{claim}
  We have
  \begin{equation*}
    I_1(f)+I_2(f)+I_3(f) \; \geq \; \begin{cases}
      4S\sinh\left(\frac{\pi R}{2S}\right), & \textrm{for all odd forms $f\in H_2(q),$}  \\
      & \\
      12S\sinh\left(\frac{\pi R}{2S}\right), &
      \begin{matrix}
        \textrm{for all odd forms $f\in H_2(q)$ and} \\
        \textrm{ if $L(s,f)$ has a zero $\rho=\beta+i\gamma$}\\
        \textrm{with $\beta\in (1/2,1]$ and $|\gamma|\leq \frac{2S}{3\log q}$,}
      \end{matrix} \\
      & \\
      8S\sinh\left(\frac{\pi R}{2S}\right), &
      \begin{matrix}
        \textrm{for all even forms $f\in H_2(q)$ and} \\
        \textrm{ if $L(s,f)$ has a zero $\rho=\beta+i\gamma$}\\
        \textrm{with $\beta\in (1/2,1]$ and $|\gamma|\leq \frac{2S}{3\log q}$.}
      \end{matrix}
    \end{cases}
  \end{equation*}
\end{claim}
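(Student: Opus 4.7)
The plan is to exploit the positivity of the summand in \eqref{eqn:sumtoI2}: every zero $\rho = \beta+i\gamma$ counted on the left satisfies $\beta \geq 1/2 - R/\log q$, so the argument of the hyperbolic sine is nonnegative and each zero contributes nonnegatively. It therefore suffices, for each of the three claimed lower bounds, to exhibit a subset of the zeros of $L(s,f)$ whose combined contribution already meets the target; any further zeros that happen to be present only help.

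The first bound is immediate. For every odd $f \in H_2(q)$ the functional equation \eqref{eqn:FE} with $\varepsilon_f = -1$ gives $\Lambda(1/2,f) = -\Lambda(1/2,f)$ and hence $L(1/2,f) = 0$. This central zero $\rho_0 = 1/2$ lies in the summation rectangle and contributes exactly $4S\sinh(\pi R/(2S))$ to the left-hand side of \eqref{eqn:sumtoI2}.

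For the second and third bounds the key step is to show that any additional zero $\rho = \beta + i\gamma$ with $\beta \in (1/2,1]$ and $|\gamma| \leq 2S/(3\log q)$ contributes at least $8S\sinh(\pi R/(2S))$ by itself. Since $L(s,f)$ has real Dirichlet coefficients I will replace $\rho$ by $\bar\rho$ if necessary so that $\gamma \geq 0$; then $\rho$ lies in the rectangle, and the functional equation together with conjugation yields a second zero $1 - \bar\rho = (1-\beta) + i\gamma$. Setting $a := (\beta - 1/2)\log q > 0$, I would split into two regimes. If $0 < a \leq R$, then $1 - \bar\rho$ also lies in the rectangle, and the identity $\sinh(x+y) + \sinh(x-y) = 2\sinh(x)\cosh(y)$ shows that $\rho$ and $1 - \bar\rho$ together contribute
\[
4S\left[\sinh\!\left(\frac{\pi(R+a)}{2S}\right) + \sinh\!\left(\frac{\pi(R-a)}{2S}\right)\right] = 8S\sinh\!\left(\frac{\pi R}{2S}\right)\cosh\!\left(\frac{\pi a}{2S}\right) \geq 8S\sinh\!\left(\frac{\pi R}{2S}\right).
\]
If instead $a > R$, the reflected zero $1 - \bar\rho$ may escape the rectangle, but then $\rho$ alone already suffices: $4S\sinh(\pi(R+a)/(2S)) \geq 4S\sinh(\pi R/S) = 8S\sinh(\pi R/(2S))\cosh(\pi R/(2S)) \geq 8S\sinh(\pi R/(2S))$ by the double-angle formula $\sinh(2x) = 2\sinh(x)\cosh(x)$.

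Combining this extra-zero contribution with the central-zero contribution gives the second bound ($4S + 8S = 12S$); for even $f$ the central zero is not guaranteed and the extra-zero analysis alone yields the third bound. I expect the only nontrivial point --- and hence the main obstacle --- to be this case split near versus far from the critical line: neither the reflected zero $1 - \bar\rho$ nor the original $\rho$ is by itself strong enough across the full range $\beta \in (1/2, 1]$, but the two elementary hyperbolic identities above conspire to supply exactly the factor of $2$ needed in each regime.
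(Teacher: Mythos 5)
Your proof is correct and is essentially the argument the paper has in mind: the paper defers to \cite[Claim 7.4]{goldfeld2016super}, which rests on exactly the same ingredients — positivity of each term in \eqref{eqn:sumtoI2}, the forced central zero for odd forms, and the pairing of a zero $\rho$ with $1-\bar\rho$ via the functional equation together with the identity $\sinh(x+y)+\sinh(x-y)=2\sinh(x)\cosh(y)$ (and the monotonicity/double-angle fallback when the reflected zero leaves the rectangle). No gaps.
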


\begin{proof}[Proof of Claim]
  This  is analogous to \cite[Claim 7.4]{goldfeld2016super}.
  Note that for odd forms $f \in H_2(q)$, we have $L(1/2, f) = 0$ for all these $f.$
\end{proof}

Let us now define
$$\mathcal N_r(q) \; := \underset{\text{for some} \;\beta\in (1/2,1], \; |\gamma|\leq \frac{2S}{3\log q} } {\underset{L(\beta+i\gamma,f)\, = \, 0\;  } {\sum_{f\in H_2(q)}}}   1.$$
By \eqref{eqn:sum-odd} and claim \ref{claim}, we have
\[
  \frac{1}{4}\mathcal A(q) + \mathcal N_{r}(q)
\;  \leq \;\frac{
        \cA\big(\big\{I_1(f)+I_2(f)+I_3(f)\big\}; q\big)}{8S\sinh\left(\frac{\pi R}{2S}\right)}.
\]
That is
\[
  \frac{\cN_{r}(q)}{\cA(q)}
 \; \leq \;
        \frac{\cA\big(\big\{I_1(f)+I_2(f)+I_3(f)\big\}; q\big)}
        {8S\sinh\left(\frac{\pi R}{2S}\right)\, \cA(q)}
        -\frac{1}{4}.
\]
It follows from theorems \ref{thm:MollificationS2} and \ref{thm:second_moment_natural}, with the same argument as in \cite[\S7]{goldfeld2016super},
 that
\begin{equation}\label{eqn:N0toV}
  \begin{split}
    \frac{\cN_{r}(q)}{\cA(q)}
    & \; \leq \; \frac{1}{8S\sinh\left(\frac{\pi R}{2S}\right)}
        \bigg( \int\limits_{0}^{S}  \cos\left(\frac{\pi t}{2S}\right)
        \log\Big( \sV(-R,t) \Big) \;dt \\
    & \hskip 30pt      +
        \int\limits_{0}^{\infty} \sinh\left(\frac{\pi u}{2S}\right)
        \log\Big( \sV(u-R,S) \Big) \;du \bigg)
     -\frac{1}{4} + \mathcal{O}_{c}\Big((\log q)^{-c}\Big),
  \end{split}
\end{equation}
for some constant $c>0$,
where
\begin{equation}\label{eqn:V(u,v)}
  \begin{split}
    \sV(u,v) & := 1 + e^{-2u}\left |\frac{e^{(-u+iv)(1-5\vartheta)}-e^{(1-\Upsilon)(-u+iv)(1-5\vartheta)}}{\Upsilon (-u+iv)(1-5\vartheta)}\right |^2 \\
    & \hskip 90pt + (1-e^{-2u}) \frac{e^{-u(1-\Upsilon)(1-5\vartheta)}-e^{-u(1-5\vartheta)}}{(u\Upsilon(1-5\vartheta))^2} .
  \end{split}
\end{equation}
Here we set $\vartheta=10^{-10}$ and $S=\frac{\pi}{2(1-\Upsilon)(1-20\vartheta)}$.
Then, by taking $\Upsilon=0.48$ and $R=7$, and a computer calculation of the integrals on the left hand side of (\ref{eqn:N0toV}), we get
\begin{equation}\label{eqn:Nr}
  \frac{\cN_{r}(q)}{\cA(q)} \;\leq\; 0.5041,
\end{equation}
when $q$ is sufficiently large.
Hence the theorem follows easily.

\section{Proof of theorem \ref{thm:SuperPositivity} } \label{sec:thmsp}

In this section we will take  $\vartheta=10^{-10}$, $\Upsilon=0.64$,
$R=4.6$ and $S=\frac{\pi}{2(1-\Upsilon)(1-20\vartheta)}$.
Let $J:=[C\log\log q]$, where $C$ is a large constant and $[x]$ means the largest integer less than $x$.  Set
$d := 2S/3$
and define the regions:
$$
  \mathcal{R}_j :=
  \begin{cases}
    \Big\{\beta+i\gamma \; \Big| \; \beta>\frac{1}{2}, \;\;  |\gamma|\leq \frac{d}{\log q}\Big\}, & \qquad  \mathrm{if} \; j=0,\\
  &\\
    \Big\{\beta+i\gamma \; \Big| \; \beta\geq \frac{1}{2}+\frac{jd}{\log q}, \;\;  |\gamma|\leq \frac{(j+1)d}{\log q}\Big\}, & \qquad  \mathrm{if} \; 1\leq j \leq J-1,\\
  &\\
   \Big\{\beta+i\gamma \; \Big| \; \beta\geq \frac{1}{2}+\frac{Jd}{\log q}, \;\;  |\gamma|\leq 1\Big\}, & \qquad \text{if} \; j = J,
   \end{cases}
$$
and the zero counting sum
$$\cN_j(q) \; :=
 \underset{ \text{one zero in}  \,\mathcal R_{j}} {\underset{L(s,f)\; \text{has at least} } {\sum_{f\in H_2(q)}}} \hskip-13pt 1, \qquad\;\; (\mathrm{where}\; 0\leq j \leq J).$$

For $j=0$, with our new choice of $R$ and $\Upsilon$, by the method in \S\ref{sec:thmrz}, we have
\begin{equation}\label{eqn:N0}
  \frac{\mathcal N_0(q)}{\mathcal A(q)} \leq 0.60934.
\end{equation}

For $1\leq j\leq J-1$, let $\cB_j$ be the rectangular box with vertices
$W_{0,j}\pm H_j$ and $W_1\pm H_j$,
where
\begin{equation}\label{eqn:W0j&Hj}
  W_{0,j} := \frac{1}{2}+\frac{jd/2}{\log q},\quad
  H_j := \frac{3(j+1)d/2}{\log q}.
\end{equation}
By Selberg's lemma, and the argument in \S\ref{sec:thmrz},
we have
\begin{equation}\label{eqn:Nj<}
  \begin{split}
    &  \frac{\cN_{j}(q)}{\cA(q)}   \; \leq \; \frac{1}{6(j+1)d\sinh\left(\frac{\pi j}{6(j+1)}\right)}
        \left[\int\limits_{0}^{\frac{3}{2}(j+1)d} \cos\left(\frac{\pi t}{3(j+1)d}\right)
        \log \Big( \sV(jd/2,t) \Big) dt \right. \\
    & \hskip 20pt
        + \left. \int\limits_{0}^{\infty} \sinh\left(\frac{\pi u}{3(j+1)d}\right)
        \log \Big( \sV(u+jd/2,3(j+1)d/2) \Big) du \phantom{\int\limits_{0}^{\frac{3}{2}(j+1)d}} \hskip -27pt\right]
        + \; \mathcal{O}_{c}\Big((\log q)^{-c}\Big),
  \end{split}
\end{equation}
for some constant $c>0$.
By a computer calculation of the integrals on the right hand side of (\ref{eqn:Nj<}), we obtain the following bounds:
\begin{equation}\label{eqn:Nj<-s}
  \begin{split}
   \frac{\cN_{1}(q)}{\cA(q)} \;  \leq \; 0.21032, \qquad
   \frac{\cN_{2}(q)}{\cA(q)} & \; \leq \; 0.03758, \qquad
   \frac{\cN_{3}(q)}{\cA(q)} \;  \leq \; 0.00995, \\
   \sum_{j=4}^{20} \;\frac{\cN_{j}(q)}{\cA(q)}  & \; \leq  \; 0.00528,
  \end{split}
\end{equation}
provided $q$ is sufficiently large.

To obtain similar bounds for $21\leq j \leq  J-1$, we will, instead,
use the following trivial bound for $u\geq 20$
\[ 1\leq \sV(u,v)  \leq 1 + e^{-0.35u}. \]
In this case
\begin{equation}\label{eqn:Nj<-l}
  \sum_{21\leq j\leq J-1} \frac{\cN_{j}(q)}{\cA(q)}
  \leq  0.001
  \end{equation}
for suitable choice of $C$.
Note that by \cite[Theorem 4]{kowalski1999analytic} we also have
\begin{equation}\label{eqn:N_J}
  \frac{\cN_{J}(q)}{\cA(q)} \ll (\log q)^{-c},
\end{equation}
when $C$ is large enough.
Thus by \eqref{eqn:N0}, \eqref{eqn:Nj<-s}, \eqref{eqn:Nj<-l}, and \eqref{eqn:N_J},
we get
\[
  \frac{\cN(q)}{\cA(q)}
\;  \leq \; \sum_{j=0}^{J} \frac{\cN_{j}(q)}{\cA(q)}
\;  \leq \; 0.88,
\]
for sufficiently large $q$.
It immediately follows from the above that
\begin{equation}\label{bound1}
  \cM(q)=\cA(q)-\cN(q)
\; \geq \; 0.12 \cdot\cA(q).
\end{equation}
This completes the proof of our theorem.


\end{document}